%!TEX program = pdflatex
\pdfoutput=1
\documentclass[preprint,12pt,a4paper]{elsarticle}

\biboptions{sort&compress}
\usepackage[margin=2.5cm]{geometry}

\usepackage{amsmath}
\usepackage{amsfonts}
\usepackage{amssymb}
\usepackage{amsthm}

\usepackage{graphicx}
\usepackage{epstopdf}
\usepackage{color}
\usepackage{array}
\usepackage{booktabs}
\usepackage{inputenc}
\usepackage{todonotes}
\usepackage[T1]{fontenc}   
\usepackage{epsfig}
\usepackage{float}
\usepackage{mathabx}
\usepackage{appendix}
\usepackage{bbm}
\usepackage{bbold}
\usepackage{mathrsfs}
\usepackage[normalem]{ulem}
\usepackage{enumitem}
\usepackage{adjustbox}
\usepackage[%
colorlinks=true,
urlcolor=blue, % \href{...}{...} external (URL)
filecolor=red, % \href{...} local file
linkcolor=blue!70!black, % \ref{...} and \pageref{...}
citecolor=green!40!black,
pdfpagemode=UseNone,
bookmarksopen=false]{hyperref} 
\usepackage{cleveref}
\usepackage{tikz,colortbl}
\usetikzlibrary{calc}
\usepackage{zref-savepos}
\usepackage{booktabs}

\newcounter{NoTableEntry}
\renewcommand*{\theNoTableEntry}{NTE-\the\value{NoTableEntry}}

%\xpatchcmd{\paragraph}{\normalfont}{{\normalfont\bfseries}}{}{}

\setcounter{secnumdepth}{3}

\newtheorem{definition}{Definition}[section]
\newtheorem{remark}{Remark}[section]
\newtheorem{lemma}{Lemma}[section]
\newtheorem{prop}{Proposition}[section]

\newtheorem{theorem}{Theorem}[section]

\newcommand{\mat}[1]{\ensuremath{\mathbf{#1}}}
\renewcommand{\vec}[1]{{\boldsymbol{#1}}}

\newlist{case}{enumerate}{1}
\setlist[case,1]{label=\textbf{Method} \arabic*:}

\newlist{form}{enumerate}{1}
\setlist[form,1]{label=\textbf{Form} \arabic*:}

\newlist{casenew}{enumerate}{1}
\setlist[casenew,1]{label=\textbf{Approach} \arabic*:}

\newlist{condition}{enumerate}{1}
\setlist[condition,1]{label=\textbf{Approach} \arabic*:}

% \bibliography{bibliography.bib}

\date{\today}
\begin{document}
%\nocite{*}

%\begin{frontmatter}

\title{Harmonic analysis on directed graphs and applications: From Fourier analysis to Wavelets
\tnoteref{t1,t2}}

\tnotetext[t1]{This work was supported by the ANR-14-CE27-0001 GRAPHSIP grant and the ACADEMICS Grant given by the IDEXLYON project of the Universit\'{e} de Lyon, as part of the "Programme Investissements d'Avenir" ANR-16-IDEX-0005. } 
\tnotetext[t2]{Preliminary results were presented in \cite{sevi2017multiresolution,sevi:GlobalSIP:2018}.}

%\title[Harmonic analysis on directed graphs]{Harmonic analysis on directed graphs and applications: From Fourier analysis to Wavelets.}

%\author[Harry Sevi, Gabriel Rilling, Pierre Borgnat]{Harry Sevi$^{1,2}$, Gabriel Rilling$^{1}$, Pierre Borgnat$^2$}

% \thanks{This work was supported by the ANR-14-CE27-0001 GRAPHSIP grant and the ACADEMICS Grant given by the IDEXLYON project of the Universit\'{e} de Lyon, as part of the "Programme Investissements d'Avenir" ANR-16-IDEX-0005. \\
% Preliminary results were presented in \cite{sevi2017multiresolution,sevi:GlobalSIP:2018}.}
% \keywords{harmonic analysis, graph signal processing, Fourier analysis, wavelets, directed graphs, random walks, semi-supervised learning}

\author[2,3]{Harry Sevi\corref{cor1}\fnref{fn1}}
\ead{harry.sevi@ens-paris-saclay.fr}

\author[2]{Gabriel Rilling} \ead{gabriel.rilling@cea.fr}

\author[3]{Pierre Borgnat} \ead{pierre.borgnat@ens-lyon.fr}

%\address[1]{
%Universit\'e Paris-Saclay, ENS Paris-Saclay, CNRS, Centre Borelli, F-91190 Gif-sur-Yvette, France}

\address[2]{CEA, LIST, Laboratoire de Sciences des Donn\'ees et de la D\'ecision, 91400 Gif-sur-Yvette, France}

\address[3]{Univ Lyon, Ens de Lyon, CNRS, Laboratoire de Physique, F-69342 Lyon, France}

\cortext[cor1]{Corresponding author.}
%\fntext[fn1]{This research work was carried out at ENS de Lyon \& CEA.}
\fntext[fn1]{Now at Borelli Center, ENS Paris-Saclay.}

\begin{abstract} 
We introduce a novel harmonic analysis for functions defined on the vertices of a strongly connected directed graph of which the random walk operator is the cornerstone. As a first step, we consider the set of eigenvectors of the random walk operator as a non-orthogonal Fourier-type basis for functions over directed graphs. We found a frequency interpretation by linking the variation of the eigenvectors of the random walk operator obtained from their Dirichlet energy to the real part of their associated eigenvalues. From this Fourier basis, we can proceed further and build multi-scale analyses on directed graphs. We propose both a redundant wavelet transform and a decimated wavelet transform by extending the diffusion wavelets framework by Coifman and Maggioni for directed graphs. The development of our harmonic analysis on directed graphs thus leads us to consider both semi-supervised learning problems and signal modeling problems on graphs applied to directed graphs highlighting the efficiency of our framework.
\end{abstract}

\begin{keyword}
harmonic analysis \sep graph signal processing \sep Fourier analysis \sep wavelets \sep directed graphs \sep random walks \sep semi-supervised learning
%% keywords here, in the form: keyword \sep keyword
%% PACS codes here, in the form: \PACS code \sep code
%% MSC codes here, in the form: \MSC code \sep code
%% or \MSC[2008] code \sep code (2000 is the default)
\end{keyword}

%\end{frontmatter}

\maketitle
 \section{Introduction} 
 
 In a world where data available for scientific or social purposes accumulates at an exponential pace, managing, exploiting and analyzing this torrent of data has become one of the challenges of our era. Some of them take the form of graphs, structures that arise in various fields such as neuroscience, the Internet, genomic data, road transportations or social networks to name a few~\cite{barthelemy2011spatial,barrat2008dynamical}. Thus, there is a need to develop efficient mathematical and computational approaches to process and analyze such graphs and data on graphs.
 
Among the existing methods, a large number involve the (graph) Laplacian~\cite{belkin2003laplacian,belkin2004regularization,gine2006empirical,coifman2006diffusionmaps,von2007tutorial}:
\begin{itemize}
\item As a fundamental subject in mathematics and physics, the Laplacian is known and used, through its spectral study to extract relevant geometric properties from a manifold~\cite{rosenberg1997laplacian} or a graph~\cite{chung1997spectral} and plays a role in machine learning applications such as spectral clustering~\cite{von2007tutorial}. 
\item In continuous space, the eigenfunctions of the Laplace-Beltrami operator represent the generalization of the Fourier basis on manifolds~\cite{rosenberg1997laplacian,lablee2015spectral}.
\item In discrete space, the eigenvectors of the graph Laplacian form an orthonormal Fourier-type basis for functions on undirected graphs~\cite{shuman2013emerging,ortega2017graph}, when undirected graphs are considered as a discretization or sampling of a manifold~\cite{belkin2007convergence,hein2007graph,gine2006empirical,coifman2006diffusionmaps}. The eigenvalue associated with each eigenvector is then related to the notion of frequency~\cite{shuman2013emerging,RICAUD2019474}. 
\end{itemize}

Thanks to these properties, the graph Laplacian bridges the gap between spectral graph theory and signal processing through the mathematical framework called ``signal processing on graphs"~\cite{shuman2013emerging,ortega2017graph}. This framework aims at extending the concepts of classical signal processing such as filtering or sampling, for functions defined on the vertices of a graph. 

First developed in the context of undirected graphs, signal processing on graphs considers the graph Laplacian (combinatorial or normalized)~\cite{chung1997spectral} as its core element. The graph Laplacian is a symmetric positive semidefinite operator. By the spectral theorem, the graph Laplacian admits an orthonormal basis of eigenvectors
which can be considered as Fourier modes, the corresponding eigenvalues being associated with a notion of frequency.

The purpose of this paper is to provide some answers to the extension of the framework of signal processing on graphs to the case of directed graphs. The extension of signal processing on directed graphs is of interest because
many networks studied in the scientific (e.g. neurosciences) or social (e.g. social networks) fields are directed. Therefore, the analysis of information on directed graphs also requires the development of mathematical approaches and therefore an extension of the framework of signal processing on graphs to the case of directed graphs.

Formally, the direct use of the core element of graph signal processing, i.e. the graph Laplacian, is no longer appropriate. A directed graph is naturally represented by a non-symmetric adjacency matrix. It is always possible to naively define a graph Laplacian in the directed case, but the spectral properties associated with this undirected graph Laplacian (e.g. nonnegative real eigenvalues and the existence of orthonormal eigenbases) are no longer verified in the case of directed graphs. There is no simple consensus on a definition of a Laplacian for directed graphs for which the variation of eigenvectors is linked to a notion of frequency. This is the main challenge of signal processing on directed graphs and of this paper: which reference operator(s) should a Fourier analysis be built upon that would also generalize Laplacian-based Fourier analysis on undirected graphs ?

Hereafter, the random walk operator on graphs~\cite{lovasz1993random,coulhon1998random,aldous2002reversible} is proposed as being a suitable reference operator for extending the framework of signal processing on directed graphs. Like the graph Laplacian, the random walk operator is associated to the notion of diffusion. However, unlike the graph Laplacian, its definition is as straightforward on directed graphs and on undirected ones. Assuming that the random walk operator is diagonalizable, it potentially admits complex conjugate eigenvectors as well as associated complex conjugate eigenvalues. Our framework is based on the fact that the variational analysis of the eigenvectors of the random walk operator through their associated Dirichlet energy~\cite{coulhon1998random} is related to the real part of the eigenvalue~\cite{sevi2017multiresolution}. Therefore, the Dirichlet energy of a given eigenvector of the random walk operator on a directed graph can be considered as its frequency, using the same analogy as in~\cite{shuman2013emerging}, now for directed graphs. Provided with this notion of frequency, the eigenvectors of the random walk operator form a suitable Fourier basis on directed graphs. It is furthermore consistent with the undirected setting when the Fourier basis is based on the random-walk Laplacian operator (see section~\ref{freq_ana_dg}).

By now considering the random walk operator as a reference operator and its associated eigenvectors as a Fourier basis for functions defined on directed graphs, the last part of our harmonic analysis on directed graphs consists in building a multi-resolution analysis for functions defined on the vertices of a directed graph. Complex graphs, both directed and undirected, have structures at different scales. Motivated by the efficiency of multi-resolution wavelet analysis in traditional signal processing~\cite{mallat1989theory} and the multi-scale dimension of graph data, a number of multi-resolution graph constructions have emerged in recent years~\cite{coifman2006diffusion,maggioni2008diffusion,hammond2011wavelets,rustamov2013wavelets,dong2017sparse}. Here, we  propose a new harmonic analysis built around the random walk operator and whose multi-resolution constructions generalize spectral graph wavelets~\cite{hammond2011wavelets,maggioni2008diffusion} and diffusion wavelets~\cite{coifman2006diffusion,bremer2006diffusion} on directed graphs.

\subsection{Literature review and related work}
Wavelets~\cite{mallat1989theory,cohen1992biorthogonal,sweldens1996lifting,kingsbury2001complex,candes2004new,peyre2008orthogonal} have been thoroughly investigated for over two decades. Their efficiency and success in analyzing functions defined on the real line have led to their generalization for functions defined on higher dimensional spaces such as the sphere~\cite{schroder1995spherical,antoine1999wavelets,wiaux2008exact} or other manifolds~\cite{maggioni2008diffusion,coifman2006diffusion,maggioni2005biorthogonal,coulhon2012heat,geller2009continuous,dong2017sparse,rahman2005multiscale}. The development of wavelets on manifolds and the multi-scale features of graphs and data on graphs recently led to the extension of wavelets constructions to the discrete space setting. These wavelet-type constructions include wavelets on unweighted graphs~\cite{crovella2003graph}, lifting wavelets~\cite{jansen2009multiscale},
diffusion wavelets~\cite{coifman2006diffusion,maggioni2005biorthogonal,bremer2006diffusion}, diffusion polynomial frames~\cite{maggioni2008diffusion}, spectral graph wavelets~\cite{hammond2011wavelets,tremblay2014graph,narang2012perfect,de2019localized}, Haar-like wavelets~\cite{szlam2005diffusion,gavish2010multiscale,irion2014hierarchical,tremblay2016graph}, average interpolating wavelets ~\cite{rustamov2011average},
graph wavelets via deep learning~\cite{rustamov2013wavelets}, multi-scale pyramid transform~\cite{shuman2016multiscale}, tight wavelet frames on graphs~\cite{dong2017sparse,gobel2018construction,behjat2016signal} and intertwining wavelets on graphs via random forests ~\cite{avena2017intertwining}. The wavelet constructions mentioned above were all developed on undirected graphs. In the present work, we intend to construct a harmonic analysis on directed graphs. The first stage of the proposed harmonic analysis is the development of a Fourier analysis. 

In recent years, Sandryhaila and Moura generalized some fundamental concepts of traditional signal processing such as filtering to directed graphs using the adjacency matrix as the central component of their framework~\cite{sandryhaila2013discrete}. They also proposed the generalized eigenvectors of the adjacency matrix obtained by Jordan decomposition as a Fourier-type basis on directed graphs~\cite{sandryhaila2014discrete}. To the best of our knowledge, no wavelet design has been proposed in this framework, with the exception of the development of the filterbank approach generalized in \cite{TayOrtega2017} which can be applied to bipartite directed graphs and uses the adjacency matrix for a polyphase representation of the filters.

More recently, new Fourier-type bases have been proposed which are based on a novel measure of \emph{graph directed variation} (GDV) for functions defined on directed graphs, which is defined as the Lov\`asz extension of the graph cut size~\cite{sardellitti2017graph,shafipour2017digraph}. In both propositions, Fourier modes are defined as a set of orthonormal functions that approximately solve a non-convex optimization problem involving the GDVs of the Fourier modes. The two approaches differ insofar as the minimized objective in~\cite{sardellitti2017graph} is simply the sum of the GDVs of the Fourier modes whereas in~\cite{shafipour2017digraph} the objective is designed to spread the individual GDVs. It is worth stressing that because of the non-convexity of the objective, the Fourier modes are not properly defined by the optimization problems alone and actually depend on the specific algorithms used to approximately solve the problems. In both cases, the Fourier modes could also be defined on undirected graphs but they would not coincide with standard approaches based on Laplacian operators. The reasons are that the Fourier modes are not eigenvectors of any given operator and that the GDV on undirected graphs differs from the common quadratic variation measures used on undirected graphs. Unlike these approaches, the Fourier basis that we propose is generally not orthogonal but it does generalize the Laplacian-based Fourier basis for undirected graphs. Furthermore we associate a notion of frequency to the Fourier modes, eigenvectors of the random walk operator, via their Dirichlet energy which is conveniently related to the corresponding eigenvalues. Most of the Fourier-like bases approaches on directed graphs have been also discussed in~\cite{marques2020signal}.  

% as the solutions of non-convex optimization problems. On the one hand, Sardellitti et al. have proposed the construction of a Fourier basis on directed graphs as a set of orthonormal vectors obtained by minimizing the Lov\`asz extension of the graph cut size~\cite{sardellitti2017graph}. On the other hand, Shafipour et al. also have proposed the construction of a set of orthonormal vectors minimizing the spectral dispersion function~\cite{shafipour2017digraph}.
% Consequently, these bases proposed by~\cite{sardellitti2017graph,shafipour2017digraph} are based on a specific notion of variation. 
% Contrary to the approaches~\cite{sardellitti2017graph,shafipour2017digraph} which are not based on a specific operator, our Fourier basis is non-orthogonal and obtained without solving non-convex optimization problems. In addition, the study of the Dirichlet energy of the eigenvectors off the random walk operator enables the definition of a notion of frequency associated with the basis. 
% Furthermore, the basis is well-defined mathematically, unlike the previous two since they are based on non-necessarily optimal solutions of non-convex optimization problems. 

Mhaskar has recently proposed the first wavelet-type frame construction for functions defined on directed graphs~\cite{mhaskar2016unified} as an extension of the diffusion polynomial frames~\cite{maggioni2008diffusion}. Although our redundant wavelet construction on directed graphs is inspired in part by that of polynomial diffusion frames, it differs mainly in the choice of the Fourier basis and the considered operator. Mhaskar proposes in~\cite{mhaskar2016unified} the set of left-singular vectors of the weighted adjacency matrix of a directed graph as a Fourier basis while we propose the set of eigenvectors of the random walk operator.

Finally, Furutani et al.~\cite{furutani2019graph} proposed an extension of graph signal processing to directed graphs using an operator called the Hermitian Laplacian or Magnetic Laplacian~\cite{colin2013magnetic, berkolaiko2013nodal} as their reference operator.  The main properties of this latter operator are to be hermitian and preserve the directionality of the directed graph. The magnetic Laplacian has been highlighted trough machine learning applications such that community detection on directed graphs ~\cite{fanuel2017magnetic} and graph neural networks on directed graphs~\cite{zhang2021magnet}.
Another operators based on directed graphs such that the Hermitian adjacency matrix ~\cite{guo2017hermitian} has been investigated through clustering on directed  graphs~\cite{cucuringu2020hermitian,laenen2020higher}, however no Fourier analyses of these operators have been carried out.

\subsection{Contributions} 
The contributions of this article are the following:
\begin{enumerate}
\item \textbf{Construction of Fourier bases on directed graphs}.
We propose the random walk operator associated with a random walk on a directed graph as a reference operator. We propose  the eigenvectors of the random walk operator as a Fourier basis on directed graphs and determine a variational characterization of the eigenvectors of the random walk operator -- see Proposition~\ref{dir_eig}. 
\item \textbf{Construction of multi-resolution analyses on directed graphs}.
We propose an  overcomplete spectral wavelet transform on directed graphs in~\Cref{spectral_wavelet_dg}. This construction extends the framework of spectral wavelets on undirected graphs~\cite{hammond2011wavelets} and diffusion polynomial frames~\cite{maggioni2008diffusion}. We also propose a critically sampled wavelet transform in Section~\ref{diffusion_wavelets_dg} generalizing the framework of diffusion wavelets~\cite{coifman2006diffusion,bremer2006diffusion} to the directed case.
\item \textbf{Efficiency of the theoretical framework through applications}.
The development of our harmonic analysis on directed graphs leads us to consider semi-supervised learning problems with $\ell_2$~regularization in Section~\ref{sec:semi_sup_l_two_norm} and $\ell_1$ regularization in section~\ref{sec:semi_sup_l_one} and signal modeling by filtering on directed graphs in Section~\ref{sig_model_dg} highlighting the effectiveness of our theoretical framework.   
\end{enumerate}

\subsection{Outline of the paper}
This paper is structured as follows. We introduce the essential aspects of graph theory and review the foundations of graph signal processing in~\Cref{sect:gsp_foundations}. We present operators defined on directed graphs built from the random walk operator and their properties in ~\Cref{sect:oper_dg} and propose a new class of operators on directed graphs expressed as the convex combination between the random walk operator and its time-reversed version. \Cref{dgft} is dedicated to the presentation of a Fourier transform on directed graphs as a set of eigenvectors of the random walk operator. In \Cref{Fourier_analysis} we propose a Fourier-type analysis for functions defined on directed graphs by studying the variation of the random walk's eigenvectors. To illustrate our Fourier-type analysis, we study the machine learning problems in~\Cref{sect:applications_one} such as semi-supervised learning and signal modeling on directed graphs and show through numerical experiments the efficiency of our framework with respect to the existing approaches. ~\Cref{sect:mra} is dedicated to multiresolution analyses on directed graphs. We present a redundant wavelets construction, as well as a critically sampled wavelets construction with the random walk operator as a reference operator and we illustrate these multiresolution analyses through some examples. We conclude in~\Cref{sec:ccl}.

\section{Fundamentals of signal processing on graphs}
\label{sect:gsp_foundations}

Signal processing on graphs \cite{shuman2013emerging,sandryhaila2013discrete,ortega2017graph} is a mathematical framework in which the core concepts of harmonic analysis are generalized to functions defined on the vertices of a graph. In this section, we introduce the essential aspects of the signal processing on graphs framework and give some additional remarks. 

\subsection{Graph theory setup}
Let $\mathcal{G}=(\mathcal{V},\mathcal{E},w)$ be a weighted directed graph where $\mathcal{V}=\{v_1,\ldots,v_N\}$ is a finite set of vertices, $\mathcal{E} \subseteq \mathcal{V}\times \mathcal{V}$ is a set of directed edges. Each edge $e_{ij}=(v_i,v_j)$ is directed and represents a link from vertex $v_i$ to vertex $v_j$. The weight function $w:\mathcal{V}\times\mathcal{V}\rightarrow\mathbb{R}_{+}$ satisfies the following conditions :
\begin{itemize}
\item $w(v_i,v_j)> 0$ \textrm{if}  $e_{ij}\in\mathcal{E}$.
\item $w(v_i,v_j)=0$ \textrm{if}  $e_{ij}\notin\mathcal{E}$.
\end{itemize}
We denote by $|\mathcal{V}|=N$, the cardinality of the vertex set $\mathcal{V}$, that is the total number of vertices in $\mathcal{V}$. A weighted directed graph $\mathcal{G}$ can be entirely represented by its weighted adjacency matrix $\mathbf{W}=\{w_{ij}\}_{1\leq i,j\leq N} \in \mathbb{R}_{+}^{N\times N}$ where  $w_{ij}=w(v_i,v_j)$ are the weights associated to the respective edges $e_{ij}$. We define respectively the out-degree and the in-degree of a vertex $v_i\in \mathcal{V}$ by $d_{i}^{+}=\sum_{j=1}^{N}w_{ij}$ and $d_{i}^{-}=\sum_{j=1}^{N}w_{ji}$. For the sake of simplicity, we refer to the out-degree $d_{i}^{+}$ of a vertex $v_i\in\mathcal{V}$ as its degree that we denote by $d_{i}$.

A limitation of our framework is that it only applies to the case of weighted graphs with nonnegative weights, that is $\mathbf{W}=\{w_{ij}\}_{1\leq i,j\leq N} \in \mathbb{R}_{+}^{N\times N}$. Other approaches exist for the case of directed graphs with positive and negative weights but they are not compatible with the one presented here~\cite{bauer2012normalized,sandryhaila2013discrete,sandryhaila2014discrete}.

We assume through the theoretical sections of this paper that the directed graph $\mathcal{G}$ is \textbf{strongly connected} and the random walk operators are \textbf{diagonalizable}. These are two necessary conditions for our theoretical framework. 
If the directed graph is not strongly connected, our framework may still apply if we modify the graph to make it strongly connected. This can be achieved using e.g. the Google method~\cite{langville2011google}, which will be used in~\Cref{sig_model_dg}. Since diagonalizable matrices form a dense set, the case of a non-diagonalizable random walk operator is a priori very rare in practice and typically never occurs in numerical eigendecompositions.

\subsection{Graph signals}
Let $f:\mathcal{V}\mapsto \mathbb{C}$ be a function defined on the vertex set $\mathcal{V}$ of a given directed graph $\mathcal{G}$. We define a graph signal $\boldsymbol f$ as a column vector representation of the function $f$ applied at each node $v_i\in \mathcal{V}$, i.e.
\begin{equation*}
\boldsymbol{f}=[f(v_1),\ldots,f(v_N)]^{\top}\in \mathbb{C}^{N}.
\end{equation*}

\noindent We now define the space of functions defined over the vertices of a directed graph.

\begin{definition}[\cite{mugnolo2016semigroup}]
Let $\mathcal{G}=(\mathcal{V},\mathcal{E})$ be a directed graph and $\mu:\mathcal{V}\mapsto[0,\infty)$ be a function on $\mathcal{V}$ considered as a  measure on $\mathcal{V}$ by setting $\mu(U)=\sum_{x \in U} \mu(x)$, $U \subset \mathcal{V}$. For $q\in[1,\infty)$, we denote $\ell^{q}(\mu,\mathcal{V})$, the space of functions $f:\mathcal{V}\rightarrow \mathbb{C}$ such that
\begin{equation*}
\|\boldsymbol{f}\|_{\ell^{q}(\mu,\mathcal{V})}=
\begin{cases}
\bigg(\sum_{x\in \mathcal{V}}|f(x)|^{q}\mu(x)\bigg)^{1/q} <\infty, \quad  \: q\in[1,\infty).\\
\underset{{x\in \mathcal{V}}}{\max}\,|f(x)|\mu(x) < \infty, \qquad   q=\infty. 
\end{cases}
\end{equation*}
\end{definition}

We assume throughout this paper that the graph signals are defined in $\ell^2(\mathcal{V},\mu)$ which is the Hilbert space of functions defined over the vertex set $\mathcal{V}$ of $\mathcal{G}$  endowed with the inner product
\begin{equation*}
\langle\boldsymbol{f},\boldsymbol{g}\rangle_{\mu}=\sum_{x\in \mathcal{V}}\overline{f(x)}g(x)\mu(x),
\end{equation*}
for all $\boldsymbol f,\boldsymbol g \in \ell^2(\mathcal{V},\mu)$ and where $\overline{f(x)}$ denotes the complex conjugate of $f(x)$. 

\section{Linear filters on graphs}
\label{sec_lin_fi_gr}
A linear operator on graph is represented by a matrix $\mathbf{H} \in \mathbb{C}^{N\times N}$ that acts on a graph signal input $\mathbf{s} \in \mathbb{C}^{N}$ and produces a graph signal output $\tilde{\mathbf{s}} \in \mathbb{C}^{N}$ according to the matrix vector product 

\begin{equation*}
\tilde{\mathbf{s}}=\mathbf{Hs}.
\end{equation*}

Within the framework of graph signal processing and inspired by conventional signal processing, we are mainly interested in a type of graph filters that commutes with a reference operator $\mathbf{R}$
\begin{equation}
\mathbf{HR}=\mathbf{RH}.
\label{eq:commutating_filter}
\end{equation}

More specifically, we define a graph filter $\mathbf{H}$ as a finite polynomial sum of a reference operator $\mathbf{R}$, that is 
\begin{equation}
\mathbf{H}=\sum_{t=0}^T h_t\mathbf{R}^{t},
\quad h_t\in \mathbb{C}, \quad \forall t=0,\dots,T.
\label{eq:polynomial_filter}
\end{equation}
Such a graph filter $\mathbf{H}$ commutes with $\mathbf{R}$ but the converse is generally not true. The following theorem establishes a particular condition on $\mathbf{R}$ under which all commutating filters~\eqref{eq:commutating_filter} are polynomial filters~\eqref{eq:polynomial_filter}.

\begin{theorem}
\label{lsi_polysum}
\cite{sandryhaila2013discrete} Let $\mathbf{R}$ be a reference operator on a directed graph $\mathcal{G}$. We assume that $\mathbf{R}$ is diagonalizable. We also assume that the characteristic and minimum polynomials of $\mathbf{R}$ are equal, i.e. $p_{\mathbf{R}}(x)=m_{\mathbf{R}}(x)$. Then a graph filter $\mathbf{H}$ that commutes with $\mathbf{R}$ is expressed as a finite polynomial sum of $\mathbf{R}$ 
\begin{equation*}
\label{poly_sum}
\mathbf{H}=\sum_{t=0}^T h_t\mathbf{R}^{t},
\quad h_t\in \mathbb{C}, \quad \forall t=0,\dots,T.
\end{equation*}
\end{theorem}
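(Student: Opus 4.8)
\emph{Proof sketch.} The plan is to use the two hypotheses on $\mathbf{R}$ to reduce to the case of a diagonal operator with pairwise distinct diagonal entries, for which the commutant is easy to describe, and then to recover $\mathbf{H}$ as a Lagrange interpolation polynomial evaluated at $\mathbf{R}$.

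First I would record the following consequence of the hypotheses. Since $\mathbf{R}$ is diagonalizable, its minimal polynomial $m_{\mathbf{R}}$ is a product of distinct linear factors, hence squarefree; since moreover $\deg m_{\mathbf{R}} = \deg p_{\mathbf{R}} = N$, the polynomial $p_{\mathbf{R}}$ has $N$ distinct roots. In other words $\mathbf{R}$ has $N$ pairwise distinct (possibly complex) eigenvalues $\lambda_1,\dots,\lambda_N$ together with an associated basis of eigenvectors, so that $\mathbf{R} = \mathbf{V}\boldsymbol{\Lambda}\mathbf{V}^{-1}$ with $\boldsymbol{\Lambda} = \mathrm{diag}(\lambda_1,\dots,\lambda_N)$ and $\mathbf{V}\in\mathbb{C}^{N\times N}$ invertible.

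Next, conjugating $\mathbf{H}\mathbf{R} = \mathbf{R}\mathbf{H}$ by $\mathbf{V}$ and setting $\widetilde{\mathbf{H}} := \mathbf{V}^{-1}\mathbf{H}\mathbf{V}$, one gets $\widetilde{\mathbf{H}}\boldsymbol{\Lambda} = \boldsymbol{\Lambda}\widetilde{\mathbf{H}}$, i.e. $(\lambda_j - \lambda_i)\widetilde{H}_{ij} = 0$ for all $i,j$. Because the $\lambda_i$ are pairwise distinct, this forces $\widetilde{H}_{ij} = 0$ for $i\neq j$, so $\widetilde{\mathbf{H}} = \mathrm{diag}(\eta_1,\dots,\eta_N)$ for suitable $\eta_i\in\mathbb{C}$. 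Since the interpolation nodes $\lambda_1,\dots,\lambda_N$ are distinct, Lagrange interpolation provides a polynomial $h(x) = \sum_{t=0}^{N-1} h_t x^t$ with $h(\lambda_i) = \eta_i$ for every $i$; then $h(\boldsymbol{\Lambda}) = \widetilde{\mathbf{H}}$, hence $h(\mathbf{R}) = \mathbf{V}\,h(\boldsymbol{\Lambda})\,\mathbf{V}^{-1} = \mathbf{V}\widetilde{\mathbf{H}}\mathbf{V}^{-1} = \mathbf{H}$. Taking $T \ge N-1$ and padding the coefficient list with zeros (or, should a larger $T$ be prescribed, folding the higher powers $\mathbf{R}^t$, $t \ge N$, back into lower ones via Cayley--Hamilton) gives precisely the stated representation $\mathbf{H} = \sum_{t=0}^{T} h_t \mathbf{R}^t$.

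The only step requiring genuine care is the first one, namely turning ``$\mathbf{R}$ diagonalizable and $p_{\mathbf{R}} = m_{\mathbf{R}}$'' into ``$\mathbf{R}$ has $N$ distinct eigenvalues''; once distinctness is available, the description of matrices commuting with a diagonal matrix of distinct entries and the interpolation step are routine, and nothing in the argument is affected by the eigenvalues or eigenvectors being complex, which is exactly the situation relevant to the random walk operator on a directed graph.
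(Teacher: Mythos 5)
Your argument is correct and complete. Note that the paper itself states Theorem~\ref{lsi_polysum} without proof, so there is no in-paper argument to compare against; what you give is the standard proof that the commutant of a non-derogatory diagonalizable matrix is the polynomial algebra it generates. Each step checks out: diagonalizability makes $m_{\mathbf{R}}$ squarefree, $p_{\mathbf{R}}=m_{\mathbf{R}}$ forces $\deg m_{\mathbf{R}}=N$ and hence $N$ pairwise distinct eigenvalues; conjugation reduces the commutation relation to $(\lambda_j-\lambda_i)\widetilde{H}_{ij}=0$, which kills the off-diagonal entries; and Lagrange interpolation at the $N$ distinct nodes recovers $\mathbf{H}$ as $h(\mathbf{R})$ with $\deg h\le N-1$. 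Your closing remark on adjusting $T$ (zero-padding or reduction via Cayley--Hamilton) also correctly handles the slight ambiguity in how the statement fixes $T$. The one observation worth adding is that the theorem is true under the single hypothesis $p_{\mathbf{R}}=m_{\mathbf{R}}$ (non-derogatory), without assuming diagonalizability, but that requires a different argument (cyclic vectors or the companion-form description of the commutant); since the paper assumes both, your simpler route through simultaneous diagonalization is entirely adequate.
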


\Cref{poly_sum} indicates that if the following three conditions are met:
\begin{enumerate}
\item $\mathbf{H}$ commutes with a reference operator $\mathbf{R}$.
\item $\mathbf{R}$ is diagonalizable.
\item Each of the eigenspaces of $\mathbf{R}$ is of dimension one.
\end{enumerate}
then the filter $\mathbf{H}$ can be expressed as a finite polynomial sum of $\mathbf{R}$. Following the terminology in~\cite{sandryhaila2013discrete} we refer to a polynomial filter~\eqref{eq:polynomial_filter} as a linear ``shift'' invariant (LSI) graph filter, or just graph filter for short.

\begin{remark}
This notion of invariance with respect to a reference operator on graphs may seem rather restrictive insofar as many diagonalizable reference operators on graphs may have eigenspaces of dimension larger than 1. However, it could also be argued that the set of all commutating filters may include unexpected elements and that the set of polynomial filters may be a more relevant generalization of the LSI filters in classical signal processing. As an example, let us consider the undirected cycle graph. Its (combinatorial) Laplacian is the circulant matrix with three main diagonals
\begin{equation*}
  % \begin{pmatrix}
  % 2 & -1&  \cdots & -1\\
  % -1 & \ddots & \ddots & \vdots \\
  % \vdots & \ddots & \ddots &  -1& \\
  % -1 & \cdots &  -1 & 2
  % \end{pmatrix},
  \begin{pmatrix}
  2 & -1& 0 & \cdots & 0 &-1\\
  -1 & \ddots & \ddots & \ddots & \ddots & 0 \\
  0 & \ddots & \ddots & \ddots & \ddots & \vdots \\
  \vdots & \ddots & \ddots & \ddots & \ddots & 0 \\
  0 & \ddots & \ddots & \ddots & \ddots &  -1& \\
  -1 & 0 & \cdots & 0 &  -1 & 2
  \end{pmatrix},
\end{equation*}
for which the discrete Fourier basis is an eigenbasis. However, most eigenspaces of the Laplacian matrix have dimension equal to 2 and are spanned by pairs of complex conjugate exponentials $v_j[k]=e^{ikj/N}$ and $v_{-j}[k]=e^{-ikj/N}$. The operator that turns a complex exponential into its conjugate, swapping  $\mathbf{v}_j$ with $\mathbf{v}_{-j}$ (i.e. swapping positive and negative frequencies), is usually not considered a LSI filter in classical signal processing. It commutes with the Laplacian but cannot be expressed as a polynomial of the Laplacian. In contrast, filters defined as polynomials of the Laplacian correspond to symmetric filters in classical signal processing, which seems more appropriate as a generic filter class on the undirected cycle graph.
\end{remark}

% Thus, it will always be possible to construct a filter on graph which is a finite polynomial sum of a reference operator and therefore always invariant to its reference operator. 

% This type of graph filter corresponds to a linear ``shift'' invariant (LSI) filter~\cite{sandryhaila2013discrete} to $\mathbf{R}$, where the reference operator $\mathbf{R}$ is referred to as ``shift'' to extend the terminology classically used in signal processing. This notion of invariance with respect to a reference operator on graphs seems rather restrictive insofar as many reference operators on graphs do not systematically admit eigenspaces of dimension equal to 1 but eventually eigenspaces of dimension greater than 1. Thus, it will always be possible to construct a filter on graph which is a finite polynomial sum of a reference operator and therefore always invariant to its reference operator. 

A graph filter can also be characterized by its eigenvalues on each eigenspace of $\mathbf{R}$. Let $\{\mathfrak{E}_j\}_{j=1}^m$ denote the eigenspaces of $\mathbf{R}$ and $\{\mathbf{E}_j\}_{j=1}^m$ the corresponding spectral projectors, characterized by $\mathbf{E}_i^2 =\mathbf{E}_i $ and $\mathbf{E}_i\mathfrak{E}_j=\delta_{ij}\mathfrak{E}_j$. A graph filter with respect to a diagonalizable operator $\mathbf{R}$ is a linear combination of the spectral projectors
\begin{equation}
\label{fil_sp_proj}
\mathbf{H}=\sum_{j=1}^m\gamma_j\mathbf{E}_j,
\quad \gamma_j \in \mathbb{C} ,\quad \mathbf{E}_j\in\mathbb{C}^{N\times N},\quad \forall j=1,\dots,m, 
\end{equation}
where $\gamma_j$ is the eigenvalue of $\mathbf{H}$ associated to the eigenspace $\mathfrak{E}_j$.

\section{Canonical operators on directed graphs}
\label{sect:oper_dg}

In this section, we introduce the fundamental linear operators and their properties in order to develop a harmonic analysis on directed graphs. 

\subsection{Random walk operators on graphs}

We define a random walk on a directed graph as follows.

\begin{definition}[\cite{aldous2002reversible,lovasz1993random}]
A random walk on a strongly connected directed graph $\mathcal{G}=(\mathcal{V},\mathcal{E},w)$ is a homogeneous Markov chain $\mathcal{X}=(X_{n})_{n\geq 0}$ not necessarily reversible with a finite state space $\mathcal{V}$ and whose transition probabilities are proportional to the edges' weights. The entries of the transition matrix $\mathbf{P}$, written as $p(x,y), \forall {x,y \in \mathcal{V}}$ are defined as
\begin{equation*}
p(x,y)=\mathbb{P}(X_{n+1}=y|X_n=x) = \frac{w(x,y)}{\sum_y w(x,y)}.
\end{equation*}
\end{definition}
A transition matrix must have non-negative entries and rows that sum up to one. This is guaranteed by the fact that the $w(x,y)$ are non-negative and that the denominators are always non-zero thanks to the graph being strongly connected. 

% The denominators are nonzero because the graph is strongly connected and $p(x,y)\geq 0, \forall x,y$
% Since The $w(x,y)$ are nonnegative, it can be verified that $\mathbf{P}$ is a proper transition matrix:
% \begin{equation*}
% p(x,y) \geq 0\quad \mbox{and} \quad \sum_{y\in \mathcal{V}}p(x,y)=1,  \quad \forall x,y\in\mathcal{V}.
% \end{equation*} 

From a graph theory point of view, the transition matrix $\mathbf{P} \in \mathbb{R}^{N\times N}$ is equal to 
\begin{equation*}
\mathbf{P}=\mathbf{D}^{-1}\mathbf{W},
\end{equation*}
where $\mathbf{D}=\operatorname{diag}\{d_1,\ldots,d_n\}$ is the diagonal matrix of the out-degrees of the vertices and $\mathbf{W}$ is the weighted adjacency matrix. For any graph signal $\boldsymbol f$, measure $\mu:\mathcal{V}\mapsto [0,\infty)$ and  states $x,y \in \mathcal{V}$, we adopt the following conventions~\cite{bremaud2013markov}:
 
\begin{align*}
\mathbf{P}\boldsymbol{f}(x)&=\sum_{y\in \mathcal{V}}p(x,y)f(y),\\
\boldsymbol\mu\mathbf{P}(y)&=\sum_{x\in\mathcal{V}}\mu(x)p(x,y).
\end{align*}
where $\boldsymbol{f}$ and $\boldsymbol{\mu}$ are the vector representations of $f$ and $\mu$ respectively. Let us consider the transition matrix $\mathbf{P}\in\mathbb{R}^{N\times N}$ and a Kronecker delta function $\boldsymbol\delta_k$ at vertex $k\in\mathcal{V}$, as a graph signal. If $\mathbf{P}$ acts on $\boldsymbol\delta_k$, that is $\mathbf{P}\boldsymbol\delta_k$, the mass at vertex $v_k$ in $\boldsymbol\delta_k$ propagates towards vertices where $p(v_i,v_k)\neq 0$, that is vertices with edges pointing \emph{to} $v_k$. In other words, the mass propagates towards $parent$ vertices and not $children$, which may seem counter-intuitive. When applied to an arbitrary signal $\mathbf{x}$, the random walk operator $\mathbf{P}$ thus behaves as a local averaging of the children's vertices. Given the importance of local averaging in signal processing, this suggests that the random walk operator $\mathbf{P}$ may be natural as a core element of graph signal processing. 

We define the irreducibility of a random walk on a graph as follows.
\begin{definition}
A random walk $\mathcal{X}$  is said to be irreducible if for any $x,y\in\mathcal{V}$, the probability from $x$ to reach $y$ is strictly positive, in other words:
\begin{equation*}
\forall x,y\in\mathcal{V},\exists m <\infty: \mathbb{P}(X_{n+m}=y|X_n=x)>0.
\end{equation*}
\end{definition}
Irreducibility is equivalent to the strong connectivity of $\mathcal{G}$, that is there is a (directed) path from any vertex $v_i \in \mathcal{V}$ to any vertex $v_j\in \mathcal{V}$. 
\begin{remark}
In the case of connected undirected graphs, the graph is always strongly connected and the associated random walk is always irreducible.
\end{remark}

We now recall the notions of recurrent and transient state. 
\begin{theorem}
Let $x\in\mathcal{V}$ be a vertex. We denote by $\nu_x$ the probability that the random walk $\mathcal{X}$ which starts at vertex $x$ returns to $x$ at least once, that is
\begin{equation*}
\nu_x=\mathbb{P}[\exists n\in\mathbb{N} : X_n=x].
\end{equation*}
Then the vertex $x$ is respectively recurrent or transient if and only if $\nu_x=1$ or $\nu_x<1$ . 
\end{theorem}

\begin{prop}[\cite{bremaud2013markov}]
\label{irrec_reccu}
An irreducible random walk $\mathcal{X}$ with a finite state space is always recurrent: all
states are recurrent.
\end{prop}

\begin{remark}
An irreducible random walk $\mathcal{X}$ with finite state space is positive recurrent. Consequently, the random walk $\mathcal{X}$ admits an unique stationary distribution~\cite{bremaud2013markov}.
\end{remark}

We define the periodicity of a random walk on a graph as follows.
\begin{definition}
\label{period_rw}
Let $\mathcal{X}$ be a random walk on $\mathcal{G}$. The period of a vertex $x\in\mathcal{V}$ is:
\begin{equation*}
\varrho(x)=\operatorname{gcd}\{n\in\mathbb{N}^{+}:p^{(n)}(x,x)>0\}.
\end{equation*}
\end{definition}

Thus starting in $x$, the Markov chain can return to $x$ only at multiples of the period $\varrho$. The state $x$ is aperiodic if $\varrho(x)=1$ and periodic is $\varrho(x)>1$. Having defined the notions of irreducibility and periodicity, we are able to define the notion of ergodicity of a random walk.

\begin{definition}
Let $\mathcal{X}$ be a random walk on $\mathcal{G}$. The random walk $\mathcal{X}$ is ergodic if it is irreducible and aperiodic. 
\end{definition}

We set out the proposition for the stationary distribution of an ergodic random walk on $\mathcal{G}$. 

\begin{prop}[\cite{montenegro2006mathematical}]
Let $\mathcal{G}$ be a directed graph with $\underline{finite}$ state space $\mathcal{V}$. If a random walk $\mathcal{X}$ with its transition matrix $\mathbf{P}$ is ergodic, i.e. irreducible and aperiodic, the measures $\mathbf{P}^{n}(x,.)$ converge towards the row vector $\boldsymbol\pi=[\pi(v_1),\cdots,\pi(v_N)]\in \mathbb{R}_{+}^N$ as $n\rightarrow \infty$, i.e. the \emph{unique} stationary distribution. In particular, $\boldsymbol\pi\mathbf{P}=\boldsymbol\pi$, with :
\begin{equation*}
\sum_{i=1}^N\pi(v_i)=1,\quad \pi(v_i)\geq 0.
\end{equation*}
\end{prop}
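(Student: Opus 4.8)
The statement to prove is the classical Perron--Frobenius / convergence theorem for ergodic finite Markov chains: existence, uniqueness, nonnegativity of the stationary distribution $\boldsymbol\pi$, and convergence of $\mathbf{P}^n(x,\cdot)$ to $\boldsymbol\pi$. Since the paper cites \cite{montenegro2006mathematical}, the proof is expected to be a short recollection of standard facts rather than a from-scratch argument, so I will present a plan that invokes Perron--Frobenius and the standard coupling/renewal machinery.

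The plan is to proceed in four steps. First, I would observe that since $\mathcal{G}$ is strongly connected, the transition matrix $\mathbf{P}=\mathbf{D}^{-1}\mathbf{W}$ is an irreducible nonnegative (row-stochastic) matrix, and aperiodicity of the walk means $\mathbf{P}$ is moreover primitive (some power $\mathbf{P}^k$ has all entries strictly positive). Second, I would apply the Perron--Frobenius theorem for primitive nonnegative matrices: $\mathbf{P}$ has a simple eigenvalue equal to its spectral radius, and because $\mathbf{P}$ is row-stochastic that eigenvalue is $1$ (the all-ones vector $\mathbbm{1}$ is a right eigenvector, $\mathbf{P}\mathbbm{1}=\mathbbm{1}$); all other eigenvalues have modulus strictly less than $1$. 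The left eigenvector for eigenvalue $1$ is then one-dimensional, and can be chosen with strictly positive entries; normalizing so the entries sum to $1$ gives a unique row vector $\boldsymbol\pi>0$ with $\boldsymbol\pi\mathbf{P}=\boldsymbol\pi$ and $\sum_i\pi(v_i)=1$. This already yields existence and uniqueness of the stationary distribution together with $\pi(v_i)\ge 0$.

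Third, for the convergence statement I would use the spectral picture: writing $\mathbf{P}$ in terms of its eigenprojectors (using diagonalizability, or more generally the Jordan form), the projector onto the eigenvalue-$1$ eigenspace is $\mathbf{\Pi}=\mathbbm{1}\boldsymbol\pi$, and every other Jordan block has spectral radius $<1$, so $\mathbf{P}^n\to\mathbbm{1}\boldsymbol\pi$ as $n\to\infty$ (geometrically fast, at rate governed by the second-largest eigenvalue modulus). Reading this off row by row gives $\mathbf{P}^n(x,\cdot)\to\boldsymbol\pi$ for every $x\in\mathcal{V}$, which is the asserted convergence of the measures. Alternatively, and avoiding any appeal to diagonalizability, one can give the standard coupling proof: run two independent copies of the chain, one started at $x$ and one started from $\boldsymbol\pi$; aperiodicity plus irreducibility make the product chain on $\mathcal{V}\times\mathcal{V}$ irreducible and hence the two copies meet in finite expected time, and the coupling inequality then forces $\|\mathbf{P}^n(x,\cdot)-\boldsymbol\pi\|_{\mathrm{TV}}\to 0$. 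Fourth, I would note that finiteness of $\mathcal{V}$ is used essentially in both arguments (to guarantee positive recurrence / existence of the Perron eigenvector / finite coupling time), which is why it is underlined in the statement.

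The main obstacle, such as it is, is really just bookkeeping: one must make sure that strong connectivity is correctly translated into irreducibility of $\mathbf{P}$ (done earlier in the excerpt) and that ergodicity is correctly translated into primitivity of $\mathbf{P}$, i.e.\ that irreducible $+$ aperiodic implies some power of $\mathbf{P}$ is strictly positive — this is the one genuinely non-cosmetic lemma, and it follows from the number-theoretic fact that for an irreducible aperiodic chain the set $\{n: p^{(n)}(x,x)>0\}$ contains all sufficiently large integers (a consequence of the gcd being $1$ and the set being closed under addition, i.e.\ the Chicken McNugget / numerical-semigroup argument), combined with irreducibility to connect distinct states. Everything past that point is a direct citation of Perron--Frobenius or of the coupling theorem, so I would keep the write-up brief and reference \cite{montenegro2006mathematical,lovasz1993random,aldous2002reversible} for the details.
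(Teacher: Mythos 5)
The paper states this proposition as a cited result from the Markov chain literature and gives no proof of its own, so there is nothing internal to compare against. Your outline is the standard and correct argument: irreducibility plus aperiodicity gives primitivity of $\mathbf{P}$ (via the numerical-semigroup fact about $\{n : p^{(n)}(x,x)>0\}$), Perron--Frobenius then yields the unique positive normalized left eigenvector $\boldsymbol\pi$ for the simple eigenvalue $1$, and convergence $\mathbf{P}^n \to \mathbb{1}\boldsymbol\pi$ follows either from the spectral decomposition or from coupling; this matches what the cited reference proves and is consistent with how the paper uses the result.
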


\subsubsection{Reversibility}
\label{reversibility_sect}
Given the discrete-time Markov chain $\mathcal{X}=(X_n)_{n\geq 0}$ and $M>0$ a  finite time horizon, we define the time reversed Markov chain as $\mathcal{X}^{*}=(X_n^{*})=(X_{M-n})$ for $n=0,\cdots,M$. We denote by $\mathbf{P}^{*}=\{p^{*}(x,y)\}_{x,y\in\mathcal{V}}$ the transition matrix associated with the Markov chain $\mathcal{X}^{*}$. It verifies
\begin{equation*}
p^{*}(x,y)=\mathbb{P}(X^{*}_{n+1}=y|X^{*}_{n}=x)=\frac{\mathbb{P}(X_{M-n-1}=y)}{\mathbb{P}(X_{M-n}=x)}p(y,x),\quad \forall x,y\in\mathcal{V}.
\end{equation*}
If we assume that $\mathcal{X}$ is ergodic with stationary distribution $\pi$, the time reversed Markov chain $\mathcal{X}^{*}$ is also ergodic with stationary distribution $\pi$ and the entries $p^{*}(x,y)\in\mathbf{P}^{*}$ are
\begin{equation*}
p^{*}(x,y)=\frac{\pi(y)}{\pi(x)}p(y,x),\quad \forall x,y\in\mathcal{V}
\end{equation*}
or in its transition matrix version
\begin{equation*}
\label{rev_rw}
\mathbf{P}^{*}=\boldsymbol\Pi^{-1}\mathbf{P}^{\top}\boldsymbol\Pi,
\end{equation*}
where $\boldsymbol\Pi=\textnormal{diag}\{\pi(v_1),\cdots,\pi(v_N)\}$ is the diagonal matrix of the stationary distribution.

Let us introduce the function space $\ell^2(\mathcal{V},\pi)$ endowed with its inner product
\begin{equation*}
\langle \boldsymbol f,\boldsymbol g\rangle_{\pi}=\sum_{x\in\mathcal{V}}\overline{f(x)}g(x)\pi(x).
\end{equation*}
In this space, $\mathbf{P}^{*}$  is the adjoint of $\mathbf{P}$, that is $\langle\boldsymbol{f}, \mathbf{P}\boldsymbol{g}\rangle_{\pi}=\langle \mathbf{P}^{*} \boldsymbol{f},\boldsymbol{g}\rangle_{\pi}$, for all $\boldsymbol f,\boldsymbol g \in \ell^2(\mathcal{V},\pi)$.

The ergodic random walk $\mathcal{X}$ with its transition matrix $\mathbf{P}$ is \underline{reversible} if and only if we have the following relation
\begin{equation*}
\mathbf{P=P^{*}}.
 \end{equation*}

\begin{remark}
\label{rev_undir}
Ergodic random walks on finite undirected graphs are reversible. That means that the transition matrix associated to the time reversed random walk $\mathcal{X}^{*}$,  $\mathbf{P}^{*}$ is equal to the transition matrix $\mathbf{P}$ associated to the original ergodic random walk $\mathcal{X}$  namely $\mathbf{P}^{*}=\mathbf{P}$.
\end{remark}

\begin{remark}
\label{undir_stat_dis}
In the undirected setting, the stationary distribution $\pi$ admits a closed form expression. Indeed, on a given weighted undirected graph $\mathcal{G}=(\mathcal{V},\mathcal{E},w)$ represented by its symmetric adjacency matrix $\mathbf{W}=\{w_{xy}\}_{x,y\in\mathcal{V}}\in \mathbb{R}_{+}^{N\times N}$ and the degree of a vertex $x\in\mathcal{V}$ is $c(x)=\sum_{y\in\mathcal{V}}w_{xy}$. As a result, the associated random walk is reversible with stationary distribution $\pi$ defined by $\pi(x)=c(x)/c_{\mathcal{G}}$ where $c_{\mathcal{G}}=\sum_{x\in\mathcal{V}}c(x)$. However, in the directed setting, the stationary distribution  $\pi$ does not admit an analytical form. In order to calculate it, we use iterative methods such as the power iteration method~\cite{golub2012matrix} or Markov Chain Monte Carlo(MCMC) methods~\cite{atchade2009adaptive}.
\end{remark}

\begin{remark}
Ergodic random walks defined on directed graphs are typically non-reversible. Nevertheless, reversible ergodic random walks may be constructed on directed graphs by modifying the original non-reversible ergodic random walk. Such a modification is discussed in section~\ref{add_revs}. 
\end{remark}

\subsubsection{Eigenvalue distribution} 
By the Perron-Frobenius theorem~\cite{bremaud2013markov}, if the random walk $\mathcal{X}$ is ergodic with a diagonalizable transition matrix $\mathbf{P}$, the diagonalization of $\mathbf{P}$ admits a simple dominant eigenvalue $\lambda_{\max}=1$. The other eigenvalues $\{\lambda \neq 1\}$ satisfy $|\lambda|<1$, which means that all eigenvalues different of $\lambda_{\max}$ lie within the unit circle.

\subsection{Random walk generalizations}
\label{rw_gen}

Given a random walk $\mathcal{X}$ on a directed graph $\mathcal{G}$ with transition matrix $\mathbf{P}$, we are able to build new types of random walks based on $\mathbf{P}$ with various purposes.

\subsubsection{Lazy random walks}
\label{lazyrw_sec}
The periodicity of a random walk $\mathcal{X}$ can be overcome by considering the lazy random walk version of $\mathcal{X}$. The transition matrix $\tilde{\mathbf{P}}$ associated to the lazy random walk $\tilde{\mathcal{X}}$, based upon $\mathbf{P}$ is expressed as
\begin{equation*}
\tilde{\mathbf{P}}=\frac{\mathbf{I}+\mathbf{P}}{2}. 
\end{equation*}
The lazy random walk can be seen as the random walk on a modified graph $\tilde{\mathcal{G}}$ where an edge from each vertex of $\mathcal{G}$ to itself is added with a weight equal to the vertex's degree in $\mathcal{G}$. The lazy random walk $\tilde{\mathcal{X}}$ is always aperiodic, hence when $\mathcal{G}$ is strongly connected, $\tilde{\mathcal{X}}$ is also ergodic with a stationary measure $\pi$ verifying $\boldsymbol{\pi} = \boldsymbol{\pi}\tilde{\mathbf{P}}=\boldsymbol{\pi}\mathbf{P}$.

More generally, let us define $\tilde{\mathcal{P}}$, the class of transition matrices associated with generalized lazy random walks by
\begin{equation*}
\tilde{\mathcal{P}}=\bigg\{\tilde{\mathbf{P}}_{\gamma}: \tilde{\mathbf{P}}_{\gamma}=(1-\gamma)\mathbf{P}+\gamma \mathbf{I}\bigg| \gamma\in[0,1)\bigg\}. 
\end{equation*}
We note that the elements $\tilde{\mathbf{P}}_{\gamma}\in\tilde{\mathcal{P}}$ share the same eigenspaces as $\mathbf{P}$, hence the graph filters with respect to $\tilde{\mathbf{P}}$ are also graph filters with respect to $\mathbf{P}$.

\subsubsection{Additive reversibilization}
\label{add_revs}
From a non-reversible ergodic random walk $\mathcal{X}$ with transition matrix $\mathbf{P}$ and unique stationary measure $\pi$, an additive reversibilization of $\mathcal{X}$ can be constructed~\cite{fill1991eigenvalue}, denoted by $\bar{\mathcal{X}}$, whose transition matrix is the average between $\mathbf{P}$ and its time reversed $\mathbf{P}^*$:

\begin{equation}
\label{add_rev}
\bar{\mathbf{P}}=\frac{\mathbf{P}+\mathbf{P}^{*}}{2}.
\end{equation}
$\bar{\mathcal{X}}$ is a reversible random walk with the same unique stationary distribution $\pi$. 

More generally, let us define $\bar{\mathcal{P}}$ the class of convex combinations between the random walk matrix $\mathbf{P}$ and its time reversed version $\mathbf{P}^{*}$ as
\begin{equation}
\label{conv_rw}
\bar{\mathcal{P}}=\bigg\{\bar{\mathbf{P}}_{\alpha}: \bar{\mathbf{P}}_{\alpha}=(1-\alpha)\mathbf{P}+\alpha\mathbf{P}^{*}\bigg| \alpha\in[0,1]\bigg\}.
\end{equation}
We note that the elements $\bar{\mathbf{P}}_{\alpha}\in\mathcal{P}$ share the same stationary distribution $\pi$ but do not have the same eigenspaces.
We also note that all the random walks associated to transition matrices $\bar{\mathbf{P}}_{\alpha}\in\bar{\mathcal{P}}$ are non-reversible except for $\alpha=1/2$, namely $\bar{\mathbf{P}}_{1/2}=\bar{\mathbf{P}}$.

\begin{remark}
Given an ergodic random walk $(\mathcal{X},\mathbf{P},\pi)$, we note that all lazy and reversibilized versions, respectively $\tilde{\mathbf{P}}_{\gamma}\in\tilde{\mathcal{P}}$ and $\bar{\mathbf{P}}_{\alpha}\in\bar{\mathcal{P}}$ share the same unique stationary distribution $\pi$.
\end{remark}

\begin{remark}
As a generalization of random walks, we may also consider the multiplicative reversibilization of the ergodic non-reversible random walk $\mathbf{P}$, that is $\mathbf{P}\mathbf{P}^{*}$ which is also a reversible Markov chain with stationary distribution $\pi$. It is useful e.g. for defining the convergence bounds of non-reversible Markov chains~\cite{fill1991eigenvalue}.
\end{remark}

\subsection{Laplacians on directed graphs}
In this section, we introduce several definitions of Laplacians on directed graphs, which are extended from the undirected cases case~\cite{anis2016efficient,girault2018irregularity,marques2020signal}.

\subsubsection{Normalized directed graph Laplacian}
The normalized Laplacian on a directed graph $\mathcal{G}$ is expressed in terms of the transition matrix $\mathbf{P}$ of an ergodic random walk $\mathcal{X}$ on $\mathcal{G}$ and is defined as follows. 

\begin{definition}[\cite{chung2005laplacians,butler2007interlacing}]
Let $\mathcal{G}=(\mathcal{V},\mathcal{E})$ be a directed graph with $|\mathcal{V}|=N$. Let $\mathcal{X}$ be an ergodic random walk on $\mathcal{G}$ with transition matrix $\mathbf{P}$ and unique stationary distribution $\pi$. The normalized Laplacian on $\mathcal{G}$ is defined by

\begin{equation}
\label{eq:dir_norm_laplacian}
\mathcal{L}=\mathbf{I}-\frac{\boldsymbol\Pi^{1/2}\mathbf{P}\boldsymbol\Pi^{-1/2}+\boldsymbol\Pi^{-1/2}\mathbf{P}^{\top}\boldsymbol\Pi^{1/2}}{2},
\end{equation}
where $\mathbf{I}$ is the identity matrix and  $\boldsymbol\Pi=\textnormal{diag}\{\pi(v_1),\ldots,\pi(v_N)\}$ is the diagonal matrix of the stationary distribution.
\end{definition}

\subsubsection{Random walk Laplacian}
Another definition is the random walk Laplacian $\mathbf{L}_{\operatorname{RW}}$, defined as
\begin{equation}
\label{eq:rw_dir_laplacian}
\mathbf{L}_{\operatorname{RW}}=\mathbf{I}-\bar{\mathbf{P}}.
\end{equation}

The normalized Laplacian on directed graphs $\mathcal{L}$ is connected to the random walk Laplacian through the following relation
\begin{equation*} 
\mathcal{L}=\boldsymbol\Pi^{1/2}\mathbf{L}_{\operatorname{RW}}\boldsymbol\Pi^{-1/2}.
\end{equation*}
% where the random walk Laplacian on directed graphs $\mathbf{L}_{\operatorname{RW}}$ is defined by 
More generally, we define  the random walk Laplacian on directed graphs associated to a transition matrix $\bar{\mathbf{P}}_{\alpha}\in\bar{\mathcal{P}}$ by
\begin{equation*}
\mathbf{L}_{\operatorname{RW},\alpha}=\mathbf{I}-\frac{\bar{\mathbf{P}}_{\alpha}+\bar{\mathbf{P}}_{\alpha}^{*}}{2}.
\end{equation*}
In the following proposition, we show that the random walk Laplacian $\mathbf{L}_{\operatorname{RW},\alpha}$ is equal to $\mathbf{L}_{\operatorname{RW}}$. 
\begin{prop}
For any $\bar{\mathbf{P}}_{\alpha}\in\bar{\mathcal{P}}$, we have
\begin{equation}
\mathbf{L}_{\operatorname{RW},\alpha}=\mathbf{L}_{\operatorname{RW}}, \quad \forall \alpha\in[0,1].
\end{equation}
\end{prop}

\begin{proof}
\begin{equation*}
\begin{split}
%\begin{align*}
\mathbf{L}_{\operatorname{RW},\alpha}&=\mathbf{I}-\frac{\bar{\mathbf{P}}_{\alpha}+\bar{\mathbf{P}}_{\alpha}^{*}}{2}\\
&=\mathbf{I}-\frac{\alpha\mathbf{P}+(1-\alpha)\mathbf{P}^{*}+\alpha\mathbf{P}^{*}+(1-\alpha)\mathbf{P}}{2}\\
&=\mathbf{I}-\frac{\alpha(\mathbf{P}+\mathbf{P}^{*})+(1-\alpha)(\mathbf{P}+\mathbf{P}^{*})}{2}\\
&=\mathbf{I}-\frac{\mathbf{P}+\mathbf{P}^{*}}{2}\\
&=\mathbf{L}_{\operatorname{RW}}. \qedhere
%\end{align*}
\end{split}
\end{equation*}
\end{proof}

\subsubsection{Combinatorial Laplacian}
Finally, we also introduce the combinatorial Laplacian $\mathbf{L}$, defined as
\begin{equation}
\label{eq:un_laplacian}
\mathbf{L}=\boldsymbol\Pi-\frac{\boldsymbol\Pi\mathbf{P}+\mathbf{P}^{\top}\boldsymbol\Pi}{2}.
\end{equation}
The latter is related to the random walk Laplacian through
\begin{equation*}
  \mathbf{L} = \boldsymbol\Pi \:\mathbf{L}_{\operatorname{RW}}. 
\end{equation*}

\begin{remark}
As mentioned in remark~\ref{rev_undir}, the transition matrix of a random walk on an undirected graph is equal to the transition matrix of its time reversed Markov chain. Furthermore, the stationary distribution $\pi$ admits a closed form as we mention in remark~\ref{undir_stat_dis}. As a result, the definitions \cref{eq:dir_norm_laplacian,eq:rw_dir_laplacian}, generalize the usual definitions for undirected graphs~\cite{chung1997spectral}. The combinatorial Laplacian on directed graphs \eqref{eq:un_laplacian} (thereafter denoted $\mathbf{L}_{directed}$) generalizes the one on undirected graphs up to a multiplicative factor: $\mathbf{L}_{undirected} = \mathbf{D}-\mathbf{W} = (\sum_i d_i)\mathbf{L}_{directed}$.
\end{remark}

\subsection{Canonical operators on directed graphs and Hilbert spaces}
\label{lino_hilsp}
Let $\mathcal{G}=(\mathcal{V},\mathcal{E})$ be a directed graph, $\ell^2(\mathcal{V})$ and $\ell^2(\mathcal{V},\pi)$ be the Hilbert spaces associated respectively with the counting measure and with the stationary measure $\pi$ of the ergodic random walk $\mathcal{X}$ on $\mathcal{G}$. Let $\varphi:\ell^2(\mathcal{V})\rightarrow\ell^2(\mathcal{V},\pi)$ be the linear mapping defined as follows
\begin{equation}
 \label{isometric_op}
 \varphi:\boldsymbol f\mapsto\boldsymbol\Pi^{-1/2}\boldsymbol f,\quad \forall \boldsymbol f\in\ell^2(\mathcal{V}).
 \end{equation}
 \begin{definition}[\cite{kubrusly2003hilbert}]
 Let $\mathsf{H},\mathsf{K}$ bet two Hilbert spaces. A linear transformation $\mathbf{V}:\mathsf{H}\mapsto \mathsf{K}$ is an isometry if and only if
 \begin{equation*}
 \|\mathbf{V}(\boldsymbol x- \boldsymbol y)\|_{\mathsf{H}}=\|\boldsymbol x-\boldsymbol y\|_{\mathsf{K}}, \quad \forall \boldsymbol x,\boldsymbol y\in\mathsf{H}.
 \end{equation*}
 \end{definition}
 \begin{prop}
 The linear transformation $\varphi:\ell^2(\mathcal{V})\mapsto\ell^2(\mathcal{V},\pi)$ is an isometry.
  \begin{proof}
 Given $\boldsymbol g,\boldsymbol h\in\ell^2(\mathcal{V})$, and the linear transformation $\varphi$, we have
\begin{equation*}
 \langle \varphi(\boldsymbol g),\varphi(\boldsymbol h)\rangle_{\pi}=\langle \boldsymbol \Pi^{-1/2}\boldsymbol g, \boldsymbol\Pi^{-1/2}\boldsymbol h\rangle_{\pi}=\langle \boldsymbol g,\boldsymbol h\rangle.\qedhere
 \end{equation*}
 \end{proof}
 \end{prop}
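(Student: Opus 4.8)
The plan is to reduce the isometry condition to the preservation of inner products (via linearity and polarization) and then verify the latter by a direct computation using the definition of the $\pi$-weighted inner product.

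First I would record that $\varphi$ is well defined. Since $\mathcal{G}$ is strongly connected and $\mathcal{X}$ is ergodic, the Perron--Frobenius theorem guarantees $\pi(v_i)>0$ for every $i$, so the diagonal matrix $\boldsymbol\Pi^{-1/2}$ has finite positive entries and $\varphi(\boldsymbol f)=\boldsymbol\Pi^{-1/2}\boldsymbol f$ lies in $\mathbb{C}^N=\ell^2(\mathcal{V},\pi)$ for every $\boldsymbol f\in\ell^2(\mathcal{V})$; note that the two spaces share the same underlying finite-dimensional vector space and differ only in their inner products, so there is no genuine domain/codomain issue.

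Next, because $\varphi$ is linear we have $\varphi(\boldsymbol x-\boldsymbol y)=\varphi(\boldsymbol x)-\varphi(\boldsymbol y)$, so it suffices to show $\|\varphi(\boldsymbol z)\|_{\pi}=\|\boldsymbol z\|$ for all $\boldsymbol z$, where $\|\cdot\|$ denotes the norm associated with the counting measure. This follows from the stronger claim $\langle\varphi(\boldsymbol g),\varphi(\boldsymbol h)\rangle_{\pi}=\langle\boldsymbol g,\boldsymbol h\rangle$ for all $\boldsymbol g,\boldsymbol h\in\ell^2(\mathcal{V})$, which I would prove by expanding the left-hand side:
\begin{equation*}
\langle\varphi(\boldsymbol g),\varphi(\boldsymbol h)\rangle_{\pi}=\sum_{x\in\mathcal{V}}\overline{\pi(x)^{-1/2}g(x)}\,\pi(x)^{-1/2}h(x)\,\pi(x)=\sum_{x\in\mathcal{V}}\overline{g(x)}\,h(x)=\langle\boldsymbol g,\boldsymbol h\rangle,
\end{equation*}
the two factors $\pi(x)^{-1/2}$ cancelling exactly against the weight $\pi(x)$. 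Taking $\boldsymbol g=\boldsymbol h=\boldsymbol x-\boldsymbol y$ then gives $\|\varphi(\boldsymbol x-\boldsymbol y)\|_{\pi}=\|\boldsymbol x-\boldsymbol y\|$, which is the definition of an isometry.

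There is essentially no obstacle: the whole argument is the one-line cancellation above, once one observes that the weight $\pi(x)$ appearing in the target inner product is precisely compensated by the two factors $\pi(x)^{-1/2}$ produced by $\boldsymbol\Pi^{-1/2}$. The only point requiring a word of care is that $\boldsymbol\Pi^{-1/2}$ be meaningful, i.e. that the stationary distribution be strictly positive, which is exactly why strong connectivity (equivalently, irreducibility of $\mathcal{X}$) is invoked.
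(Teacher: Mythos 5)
Your proof is correct and follows essentially the same route as the paper's: a direct expansion of the $\pi$-weighted inner product in which the two factors $\pi(x)^{-1/2}$ introduced by $\boldsymbol\Pi^{-1/2}$ cancel against the weight $\pi(x)$. The extra remarks you add (strict positivity of $\pi$ via irreducibility, and the reduction from inner-product preservation to the stated isometry definition) are sensible elaborations of what the paper leaves implicit, not a different argument.
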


 Let us introduce the operator $\mathbf{T}\in\ell^2(\mathcal{V})$ defined as
 \begin{equation}
 \label{rw_ldeux}
\mathbf{T}=\boldsymbol\Pi^{1/2}\mathbf{P}\boldsymbol\Pi^{-1/2}. 
 \end{equation}
 \begin{definition}[\cite{kubrusly2003hilbert}]
 Let $\mathsf{H},\mathsf{K}$ bet two Hilbert spaces. An invertible bounded linear transformation $\mathbf{V}:\mathsf{H}\mapsto \mathsf{K}$ intertwines an operator  $\mathbf{M}:\mathsf{H}\mapsto\mathsf{H}$ to an operator $\mathbf{S}:\mathsf{K}\mapsto\mathsf{K}$ if
 \begin{equation*}
\mathbf{V} \mathbf{M}= \mathbf{S}\mathbf{V}.
 \end{equation*}
 The operators $\mathbf{M}$ and $\mathbf{S}$ are called similar. 
 \end{definition}
 \begin{prop}
 The linear operator $\varphi$ intertwines the operator $\mathbf{T}$ on $\ell^2(\mathcal{V})$ to the operator $\mathbf{P}$ on $\ell^2(\mathcal{V},\pi)$.
 \begin{proof}
 \begin{equation*}
 \varphi(\mathbf{T})=\boldsymbol\Pi^{-1/2}\mathbf{T}=\boldsymbol\Pi^{-1/2}\boldsymbol\Pi^{1/2}\mathbf{P}\boldsymbol\Pi^{-1/2}=\mathbf{P}\boldsymbol\Pi^{-1/2}.\qedhere
 \end{equation*}
 \end{proof}
 \end{prop}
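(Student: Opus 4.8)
The plan is to unwind the definitions and reduce the claim to a one-line matrix identity. The map $\varphi$ defined in \eqref{isometric_op} is nothing but left multiplication by the diagonal matrix $\boldsymbol\Pi^{-1/2}$. Since $\mathcal{G}$ is strongly connected and $\mathcal{X}$ ergodic, the stationary distribution satisfies $\pi(v_i)>0$ for every $i$, so $\boldsymbol\Pi^{1/2}$ and $\boldsymbol\Pi^{-1/2}$ are well-defined invertible diagonal matrices; in finite dimension they are automatically bounded. Hence $\varphi$ is an invertible bounded linear transformation and the notion of intertwining in the preceding definition applies to it.

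With that observation in hand, the main step is a direct substitution of the definition of $\mathbf{T}$ from \eqref{rw_ldeux}. One computes $\varphi\mathbf{T}=\boldsymbol\Pi^{-1/2}\mathbf{T}=\boldsymbol\Pi^{-1/2}\boldsymbol\Pi^{1/2}\mathbf{P}\boldsymbol\Pi^{-1/2}=\mathbf{P}\boldsymbol\Pi^{-1/2}=\mathbf{P}\varphi$, using only that diagonal matrices commute with their own powers, so that $\boldsymbol\Pi^{-1/2}\boldsymbol\Pi^{1/2}=\mathbf{I}$. This is exactly the relation $\mathbf{V}\mathbf{M}=\mathbf{S}\mathbf{V}$ with $\mathbf{V}=\varphi$, $\mathbf{M}=\mathbf{T}$ acting on $\ell^2(\mathcal{V})$ and $\mathbf{S}=\mathbf{P}$ acting on $\ell^2(\mathcal{V},\pi)$, so $\mathbf{T}$ and $\mathbf{P}$ are similar.

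There is essentially no obstacle here: once one sees that $\varphi$ is realized by the matrix $\boldsymbol\Pi^{-1/2}$ and that $\mathbf{T}$ was defined precisely as the conjugate $\boldsymbol\Pi^{1/2}\mathbf{P}\boldsymbol\Pi^{-1/2}$, the identity is immediate, and the only point worth stating explicitly is the positivity of the stationary distribution, which is where the standing assumptions enter. One may add the remark that combining this with the earlier proposition that $\varphi$ is an isometry shows $\mathbf{T}$ and $\mathbf{P}$ are in fact unitarily equivalent, so that their spectra coincide and the frequency interpretation developed later transfers verbatim between the $\ell^2(\mathcal{V})$ and $\ell^2(\mathcal{V},\pi)$ pictures.
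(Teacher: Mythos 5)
Your proof is correct and is essentially identical to the paper's: both substitute the definition $\mathbf{T}=\boldsymbol\Pi^{1/2}\mathbf{P}\boldsymbol\Pi^{-1/2}$ into $\varphi\mathbf{T}=\boldsymbol\Pi^{-1/2}\mathbf{T}$ and cancel to obtain $\mathbf{P}\boldsymbol\Pi^{-1/2}=\mathbf{P}\varphi$. Your added remarks on the positivity of $\pi$ (hence invertibility and boundedness of $\varphi$) and on the final identification with $\mathbf{P}\varphi$ only make explicit what the paper leaves implicit.
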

Hence the operators $\mathbf{T}$ and $\mathbf{P}$ are similar with respect to $\varphi$. Identically, the random walk Laplacian $\mathbf{L}_{\operatorname{RW}}$ and the normalized Laplacian $\mathcal{L}$ are also similar with respect to~$\varphi$. This suggests that $\mathbf{P}$ and $\mathbf{L}_{\operatorname{RW}}$ are best viewed as operators on $\ell^2(\mathcal{V},\pi)$ while $\mathbf{T}$ and $\mathcal{L}$ are essentially the same operators on $\ell^2(\mathcal{V})$.

\section{Directed graph Fourier transform}
\label{dgft}

Let $\mathbf{P}$ be the transition matrix of the ergodic random walk $\mathcal{X}$. We assume $\mathbf{P}$ diagonalizable, that is $\mathbf{P}$ admits an eigenvalue decomposition 
\begin{equation*}
\mathbf{P}=\boldsymbol\Xi\boldsymbol\Theta\boldsymbol\Xi^{-1},
\end{equation*}
where $\boldsymbol\Xi=[\boldsymbol\xi_1,\cdots,\boldsymbol\xi_N]$ is an eigenbasis with elements $\boldsymbol\xi_j,j=1,\ldots,N$ and $\boldsymbol\Theta=\operatorname{diag}\{\vartheta_1,\cdots,\vartheta_N\}$ is the corresponding diagonal eigenvalue matrix, non necessarily distinct. Given a graph signal $\boldsymbol s$, its \underline{directed graph Fourier transform} denoted by $\hat{\boldsymbol s}=[\hat{s}_1,\ldots,\hat{s}_N]$ is
\begin{equation*}
\hat{\boldsymbol s}=\boldsymbol\Xi^{-1}\boldsymbol s.
\end{equation*}
The values $\{\hat{s}_j\}_{j=1}^N$ characterize the content of the graph signal $\boldsymbol s$ in the graph Fourier domain. 

The graph Fourier transform is not uniquely defined as it depends on the choice of eigenbasis $\boldsymbol\Xi$. First all eigenvectors $\boldsymbol\xi_j$ are defined up to a multiplicative scalar. This can be easily resolved by normalizing the vectors and e.g. making the first non-zero coefficient of each eigenvector real and positive. Second, whenever there are eigenspaces with dimension greater than~1, there are infinite choices for the orientations of the basis vectors within such eigenspaces. There is generally no solution to this second issue. This implies that the values of individual Fourier coefficients $\hat{s}_j$ within such eigenspaces are rather meaningless. However, it only impacts cases with eigenspaces of dimension greater than~1, which are not that common in applications.

The \underline{inverse directed graph Fourier transform} corresponding to an eigenbasis $\boldsymbol\Xi$ is given by 
\begin{equation*}
\boldsymbol s=\boldsymbol \Xi \hat{\boldsymbol s}.
\end{equation*}
It reconstructs the original signal from its frequency contents by forming a linear combination of eigenvectors weighted by the Fourier coefficients. 

More generally, any $\bar{\mathbf{P}}_{\alpha}\in\bar{\mathcal{P}}$ admits an eigenvalue decomposition 
\begin{equation*}
\bar{\mathbf{P}}_{\alpha}=\boldsymbol\Xi_{\alpha}\boldsymbol\Theta_{\alpha}\boldsymbol\Xi^{-1}_{\alpha}. 
\end{equation*}
Consequently, one can build an infinity of Fourier-type bases $\{\boldsymbol\Xi_{\alpha}\}_{\alpha\in[0,1]}$ for functions defined on directed graphs. Among these, the case $\alpha=1/2$ is particularly interesting insofar as $\bar{\mathbf{P}}_{1/2} = \bar{\mathbf{P}}$ is self-adjoint in $\ell^2(\mathcal{V},\pi)$. This implies that there is an orthonormal eigenbasis $\boldsymbol\Xi_{1/2}$ of $\ell^2(\mathcal{V},\pi)$ which yields the following theorem.

% and that the corresponding Fourier transform is an isometric morphism from $\ell^2(\mathcal{V},\pi)$ to $\ell^2(\{1,\ldots,N\})$.
\begin{theorem}[Generalized Parseval's Theorem]
Given an eigenbasis $\boldsymbol\Xi_{1/2}$ of $\bar{\mat{P}}$ that is orthonormal in $\ell^2(\mathcal{V},\pi)$, the corresponding Fourier transform is an isometric operator from $\ell^2(\mathcal{V},\pi)$ to $\ell^2(\{1,\ldots,N\})$
% \begin{equation}
%   \langle \vec{\Xi}_{1/2}^{-1} \vec{x},\vec{\Xi}_{1/2}^{-1} \vec{y} \rangle = \langle \vec{x},\vec{y} \rangle_\pi, \quad \forall \vec{x},\vec{y}\in\ell^2(\mathcal{V},\pi).
% \end{equation}
% \begin{equation}
% \langle\vec{\Xi}_{1/2}\vec{x},\vec{\Xi}_{1/2}\vec{y}\rangle_{\pi}=\langle\vec{x},\vec{y}\rangle, \quad \forall \vec{x},\vec{y}\in\ell^2(\{1,\ldots,N\}).
% \end{equation}
\begin{equation}
\langle\vec{\Xi}_{1/2}^{-1}\vec{x},\vec{\Xi}_{1/2}^{-1}\vec{y}\rangle=\langle\vec{x},\vec{y}\rangle_\pi, \quad \forall \vec{x},\vec{y}\in\ell^2(\mathcal{V},\pi).
\end{equation}
%  \begin{proof}
%  $\vec{\Xi}_{1/2}^\top\vec{\Pi}\vec{\Xi}_{1/2} = \mat{I}\Longrightarrow \vec{x}^\top\vec{\Xi}_{1/2}^{\top-1}\vec{\Xi}_{1/2}^{-1}\vec{y} = \vec{x}^\top\vec{\Xi}_{1/2}^{\top-1}\vec{\Xi}_{1/2}^\top\vec{\Pi}\vec{y} = \vec{x}^\top\vec{\Pi}\vec{y}$
%  \end{proof}

 % \begin{proof}
 % $\vec{\Xi}_{1/2}^\top\vec{\Pi}\vec{\Xi}_{1/2} = \mat{I}\Longrightarrow \langle\vec{\Xi}_{1/2}\vec{x},\vec{\Xi}_{1/2}\vec{y}\rangle_{\pi}=\vec{x}^{\top}\vec{\Xi}_{1/2}^{\top}\vec{\Pi}\vec{\Xi}_{1/2}\vec{y}=\vec{x}^{\top}\vec{y}=\langle\vec{x},\vec{y}\rangle$
 % \end{proof}
 \begin{proof}
 $\vec{\Xi}_{1/2}^\top\vec{\Pi}\vec{\Xi}_{1/2} = \mat{I}\Longrightarrow %
 \langle\vec{\Xi}_{1/2}^{-1}\vec{x},\vec{\Xi}_{1/2}^{-1}\vec{y}\rangle=\vec{x}^{\top}\vec{\Xi}_{1/2}^{-\top}\vec{\Xi}_{1/2}^{-1}\vec{y}=\vec{x}^{\top}\vec{\Pi}\vec{y}=\langle\vec{x},\vec{y}\rangle_\pi$
 \end{proof}
\end{theorem}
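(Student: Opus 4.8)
The plan is to unpack the statement that $\vec{\Xi}_{1/2}$ is orthonormal in $\ell^2(\mathcal{V},\pi)$ into the matrix identity $\vec{\Xi}_{1/2}^\top \vec{\Pi}\,\vec{\Xi}_{1/2} = \mat{I}$, and then verify the isometry property by a direct expansion of the weighted inner product. First I would recall that, by definition, the columns $\{\vec{\xi}_j\}$ of $\vec{\Xi}_{1/2}$ being orthonormal with respect to $\langle\cdot,\cdot\rangle_\pi$ means $\langle\vec{\xi}_i,\vec{\xi}_j\rangle_\pi = \sum_{x\in\mathcal{V}}\overline{\xi_i(x)}\,\xi_j(x)\,\pi(x) = \delta_{ij}$, which in matrix form is precisely $\vec{\Xi}_{1/2}^\ast \vec{\Pi}\,\vec{\Xi}_{1/2} = \mat{I}$ (here $\ast$ denoting conjugate transpose; since $\bar{\mat{P}}$ is self-adjoint in $\ell^2(\mathcal{V},\pi)$ such an orthonormal eigenbasis exists by the spectral theorem, and one may take the eigenvectors real, so the transpose suffices).

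Next I would take arbitrary $\vec{x},\vec{y}\in\ell^2(\{1,\ldots,N\})$ and compute directly:
\begin{equation*}
\langle\vec{\Xi}_{1/2}\vec{x},\vec{\Xi}_{1/2}\vec{y}\rangle_{\pi} = (\vec{\Xi}_{1/2}\vec{x})^{\top}\vec{\Pi}\,(\vec{\Xi}_{1/2}\vec{y}) = \vec{x}^{\top}\vec{\Xi}_{1/2}^{\top}\vec{\Pi}\,\vec{\Xi}_{1/2}\vec{y} = \vec{x}^{\top}\mat{I}\,\vec{y} = \vec{x}^{\top}\vec{y} = \langle\vec{x},\vec{y}\rangle,
\end{equation*}
where the first equality is the definition of $\langle\cdot,\cdot\rangle_\pi$ applied to the two transformed vectors, the second is associativity of matrix multiplication together with $(\vec{A}\vec{x})^\top = \vec{x}^\top\vec{A}^\top$, the third is the orthonormality identity established above, and the last is the definition of the standard inner product on $\ell^2(\{1,\ldots,N\})$. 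This shows the Fourier transform $\vec{x}\mapsto \vec{\Xi}_{1/2}\vec{x}$ (the inverse transform, i.e. synthesis) preserves inner products, hence is an isometry.

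I do not anticipate a serious obstacle here: the result is essentially a restatement of the change-of-basis identity for an orthonormal basis, and the only subtlety worth flagging is the bookkeeping between conjugate transpose and ordinary transpose. One should either state at the outset that the orthonormal eigenbasis of the self-adjoint operator $\bar{\mat{P}}$ can be chosen real (so that $\vec{\Xi}_{1/2}^\ast = \vec{\Xi}_{1/2}^\top$ and the displayed computation is literally correct), or carry the complex conjugate through the argument, noting $\langle\vec{\Xi}_{1/2}\vec{x},\vec{\Xi}_{1/2}\vec{y}\rangle_\pi = \overline{\vec{x}}^{\,\top}\vec{\Xi}_{1/2}^{\ast}\vec{\Pi}\,\vec{\Xi}_{1/2}\vec{y} = \overline{\vec{x}}^{\,\top}\vec{y} = \langle\vec{x},\vec{y}\rangle$. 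Either way the argument is a two-line computation, and the substantive content — that such an orthonormal $\vec{\Xi}_{1/2}$ exists at all — has already been supplied by the self-adjointness of $\bar{\mat{P}}$ in $\ell^2(\mathcal{V},\pi)$ noted just before the theorem.
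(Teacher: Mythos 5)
Your proof is correct and follows essentially the same route as the paper's: translate orthonormality in $\ell^2(\mathcal{V},\pi)$ into the matrix identity $\vec{\Xi}_{1/2}^{\top}\vec{\Pi}\vec{\Xi}_{1/2}=\mat{I}$ and expand the weighted inner product directly. Your added remark on choosing the eigenbasis real (or carrying the conjugate transpose) is a small but worthwhile clarification of the same one-line computation.
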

Another interesting property of $\bar{\mathbf{P}}$ is that its eigenspaces are the same as those of the random walk Laplacian $\mathbf{L}_{\operatorname{RW}}$~\eqref{eq:rw_dir_laplacian}.

\begin{remark}
 \label{remark_undir_one}
In the undirected setting, a suitable Fourier-type basis is usually defined as an orthonormal eigenbasis of the combinatorial Laplacian $\mathbf{L}$  or its normalized counterpart $\mathcal{L}$ \cite{chung1997spectral}. These Fourier-type bases are both orthonormal for $\ell^2(\mathcal{V})$. Another less common choice is a Fourier-type basis based upon the random walk Laplacian $\mathbf{L}_{\operatorname{RW}}=\mathbf{I}-\mathbf{P}$. As in the directed case with $\bar{\mat{P}}$, it leads to a Fourier basis that is orthonormal in $\ell^2(\mathcal{V},\pi)$, where $\pi$ is proportional to the degrees of the vertices (see remark~\ref{undir_stat_dis}). Some properties of the different definitions are investigated in~\cite{girault2018irregularity}.

\end{remark}

\section{Fourier analysis on directed graphs}
\label{Fourier_analysis}
In this section, we propose a new analysis on directed graphs that is different from the existing approaches on directed graphs~\cite{sandryhaila2014discrete,sardellitti2017graph,shafipour2017digraph,mhaskar2016unified}. It is based on the study of variations in the eigenvectors of the random walk operator. Before discussing it in greater detail, we introduce the elements that allow us to study the variation of signals on directed graphs.

\subsection{Regularity of signals on graphs}
\label{reg_gs}

The behavior of a graph signal over a directed graph (or undirected) can be analyzed by measuring its regularity or smoothness. We first define the length of the graph gradient  at a given vertex as follows.

\begin{definition}[\cite{coulhon1998random}]
Let $\mathcal{G}=(\mathcal{V},\mathcal{E})$ be a directed graph and $\nu:\mathcal{E}\rightarrow [0,\infty)$ be a positive measure defined on the edge set $\mathcal{E}$. The length of the graph gradient of a graph signal $\boldsymbol f$ at vertex $x\in\mathcal{V}$ on an arbitrary graph $\mathcal{G}$ under the measure $\nu$ is

\begin{equation*}
|\nabla\boldsymbol{f}(x)|=\bigg(\frac{1}{2}\sum_{y\in\mathcal{V},(x,y)\in \mathcal{E}}|f(x)-f(y)|^2\nu(x,y)\bigg)^{1/2}. 
\end{equation*}
\end{definition}

Intuitively, the length of the graph gradient measures the smoothness of a graph signal around a given vertex. We now introduce the Dirichlet energy as a measure of the smoothness of a signal over a strongly connected graph.

\begin{definition}[\cite{montenegro2006mathematical}]
The Dirichlet energy of a graph signal  $\boldsymbol f$ associated to the ergodic random walk $\mathcal{X}$ with transition matrix $\mathbf{P}$ and stationary distribution  $\pi$ on a strongly connected graph $\mathcal{G}$ is 

\begin{align}
\label{dir_nrj}
\mathcal{D}_{\pi,\mathbf{P}}^2(\boldsymbol{f})&=\frac{1}{2}\sum_{(x,y)\in\mathcal{E}}\pi(x)p(x,y)|f(x)-f(y)|^2,\\
&=\langle \boldsymbol{f},\mathbf{L}_{\operatorname{RW}}                                                                                                                                                                                                                                                                                                                                                                                     \boldsymbol{f}\rangle_{\pi} \nonumber.
\end{align}

\end{definition}

As we can appreciate, $\mathcal{D}_{\pi,\mathbf{P}}^2(\boldsymbol{f})=\||\nabla\boldsymbol{f}|\|_{2,\nu}^2$ where $\nu(x,y)=\pi(x)p(x,y)$ where $p(x,y)$ is the $(x,y)$ entry of the transition matrix $\mathbf{P}$ and $\pi(x)$ is the stationary distribution at vertex $x$. For all $\bar{\mathbf{P}}_{\alpha}\in\bar{\mathcal{P}}$, the associated Dirichlet energy is the same:
\begin{equation*}
\mathcal{D}_{\pi,\bar{\mathbf{P}}_{\alpha}}^2(\boldsymbol{f})=\mathcal{D}_{\pi,\mathbf{P}}^2(\boldsymbol{f})=\langle \boldsymbol{f},\mathbf{L}_{\operatorname{RW}}                                                                                                                                                                                                                                                                                                                                                                                     \boldsymbol{f}\rangle_{\pi}.
\end{equation*}

We also introduce the Rayleigh quotient of a graph signal $\boldsymbol f$ associated to the Dirichlet energy $\mathcal{D}_{\pi,\mathbf{P}}^2(\boldsymbol{f})$ as

\begin{equation*}
\mathcal{R}_{\pi,\mathbf{P}}(\boldsymbol{f})=\frac{\mathcal{D}_{\pi,\mathbf{P}}^2(\boldsymbol{f})}{\|\boldsymbol{f}\|_{\pi}^2}.
\end{equation*}

For any $\bar{\mathbf{P}}_{\alpha}\in\bar{\mathcal{P}}$, we have
\begin{equation*}
\mathcal{R}_{\pi,\mathbf{P}}(\boldsymbol{f})=\mathcal{R}_{\pi,\bar{\mathbf{P}}_{\alpha}}(\boldsymbol{f}).
\end{equation*}

\subsection{Frequency analysis on directed graphs}
\label{freq_ana_dg}

The following proposition settles a key connection between the regularity of the eigenvectors of the transition matrix $\mathbf{P}$ of the ergodic random walk $\mathcal{X}$ and their associated eigenvalues. 

\begin{prop}\label{dir_eig}
Let $\boldsymbol \xi\in\mathbb{C}^N$ 
be an eigenvector of a transition matrix $\mathbf{P}$ of an ergodic random walk $\mathcal{X}$, with stationary distribution $\pi$, associated to the eigenvalue $\vartheta \in \mathbb{C}$. The Rayleigh quotient of $\boldsymbol\xi$ is given by
\begin{equation*}
\mathcal{R}_{\pi,\mathbf{P}}(\boldsymbol\xi) =1-\mathfrak{Re}(\vartheta),
\end{equation*}
 where $\mathfrak{Re}(\vartheta)$ denotes the real part of $\vartheta \in \mathbb{C}$. 
 \begin{proof}
\begin{align*}
\mathcal{R}_{\pi,\mathbf{P}}(\boldsymbol\xi)&=\frac{1}{\|\boldsymbol\xi\|_{\pi}^2}\bigg(\langle \boldsymbol \xi, \boldsymbol \xi\rangle_{\pi}-\frac{1}{2}\langle \boldsymbol \xi,\mathbf{P}\boldsymbol \xi\rangle_{\pi}-\frac{1}{2}\langle \boldsymbol \xi,\mathbf{P}^{*}\boldsymbol \xi\rangle_{\pi}\bigg)\\
&=\frac{1}{\|\boldsymbol\xi\|_{\pi}^2}\bigg(\|\boldsymbol \xi\|_{\pi}^2-\frac{\vartheta+\bar{\vartheta}}{2}\|\boldsymbol \xi\|_{\pi}^2\bigg)\\
% &=||\boldsymbol \xi||_{\pi}^2-\frac{1}{2}\eta||\boldsymbol \xi||_{\pi}^2-\frac{1}{2}\overline{\eta}||\boldsymbol \xi||_{\pi}^2\\
\mathcal{R}_{\pi,\mathbf{P}}(\boldsymbol\xi)&=\big[1-\mathfrak{Re}(\vartheta)\big].\qedhere
\end{align*}
\end{proof}
\end{prop}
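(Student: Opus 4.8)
The plan is to unwind the definition of the Rayleigh quotient and reduce everything to the eigenvalue equation for $\boldsymbol\xi$ together with the fact that $\mathbf{P}^{*}$ is the adjoint of $\mathbf{P}$ in $\ell^2(\mathcal{V},\pi)$. First I would write $\mathcal{R}_{\pi,\mathbf{P}}(\boldsymbol\xi)=\mathcal{D}_{\pi,\mathbf{P}}^2(\boldsymbol\xi)/\|\boldsymbol\xi\|_{\pi}^2$ and use the quadratic-form expression from~\eqref{dir_nrj}, namely $\mathcal{D}_{\pi,\mathbf{P}}^2(\boldsymbol\xi)=\langle\boldsymbol\xi,\mathbf{L}_{\operatorname{RW}}\boldsymbol\xi\rangle_{\pi}$ with $\mathbf{L}_{\operatorname{RW}}=\mathbf{I}-\bar{\mathbf{P}}$ and $\bar{\mathbf{P}}=\tfrac12(\mathbf{P}+\mathbf{P}^{*})$. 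Expanding by linearity gives
\[
\mathcal{D}_{\pi,\mathbf{P}}^2(\boldsymbol\xi)=\langle\boldsymbol\xi,\boldsymbol\xi\rangle_{\pi}-\tfrac12\langle\boldsymbol\xi,\mathbf{P}\boldsymbol\xi\rangle_{\pi}-\tfrac12\langle\boldsymbol\xi,\mathbf{P}^{*}\boldsymbol\xi\rangle_{\pi}.
\]

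Next I would substitute the eigenvalue relation $\mathbf{P}\boldsymbol\xi=\vartheta\boldsymbol\xi$, which yields $\langle\boldsymbol\xi,\mathbf{P}\boldsymbol\xi\rangle_{\pi}=\vartheta\|\boldsymbol\xi\|_{\pi}^2$, and then handle the $\mathbf{P}^{*}$ term by recalling from Section~\ref{reversibility_sect} that $\mathbf{P}^{*}$ is the adjoint of $\mathbf{P}$ in $\ell^2(\mathcal{V},\pi)$. Using this together with the conjugate-symmetry of the $\pi$-inner product, $\langle\boldsymbol\xi,\mathbf{P}^{*}\boldsymbol\xi\rangle_{\pi}=\overline{\langle\boldsymbol\xi,\mathbf{P}\boldsymbol\xi\rangle_{\pi}}=\bar\vartheta\|\boldsymbol\xi\|_{\pi}^2$. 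Collecting terms gives $\mathcal{D}_{\pi,\mathbf{P}}^2(\boldsymbol\xi)=\bigl(1-\tfrac{\vartheta+\bar\vartheta}{2}\bigr)\|\boldsymbol\xi\|_{\pi}^2=(1-\mathfrak{Re}(\vartheta))\|\boldsymbol\xi\|_{\pi}^2$, and dividing through by $\|\boldsymbol\xi\|_{\pi}^2$ finishes the argument. Here $\|\boldsymbol\xi\|_{\pi}^2>0$ since $\boldsymbol\xi$ is a nonzero eigenvector and, by the Perron--Frobenius theorem, the stationary distribution $\pi$ is strictly positive on every vertex.

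There is no real obstacle here; the one point that requires care is that the computation lives in the weighted space $\ell^2(\mathcal{V},\pi)$, so one must use the characterization of $\mathbf{P}^{*}$ as the $\pi$-adjoint of $\mathbf{P}$ (equivalently $\mathbf{P}^{*}=\boldsymbol\Pi^{-1}\mathbf{P}^{\top}\boldsymbol\Pi$) rather than the ordinary transpose. Everything else is a one-line expansion using bilinearity of the inner product and the eigenvalue equation. As a sanity check, the formula is consistent with the Perron eigenvalue $\vartheta=1$ giving $\mathcal{R}_{\pi,\mathbf{P}}=0$, as expected for the constant eigenvector, and with the undirected case where $\mathbf{P}=\mathbf{P}^{*}$ and the eigenvalues are real.
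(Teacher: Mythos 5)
Your proof is correct and follows essentially the same route as the paper: expand $\langle\boldsymbol\xi,\mathbf{L}_{\operatorname{RW}}\boldsymbol\xi\rangle_{\pi}$ into the $\mathbf{P}$ and $\mathbf{P}^{*}$ terms, evaluate the first via the eigenvalue equation and the second via the $\pi$-adjoint relation to get $\bar\vartheta\|\boldsymbol\xi\|_{\pi}^2$, and divide by $\|\boldsymbol\xi\|_{\pi}^2$. The only additions beyond the paper's argument are the (welcome) remarks on the positivity of $\pi$ and the consistency checks.
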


The latter proposition thus indicates that the smoothness of any eigenvector $\boldsymbol\xi$ of $\mathbf{P}$, as described by its Rayleigh quotient, is associated to the real part of its respective eigenvalue $\vartheta$. More generally, the smoothness of any eigenvector $\boldsymbol\xi_{\alpha}$ of $\bar{\mathbf{P}}_{\alpha}\in\bar{\mathcal{P}}$ is characterized exactly in the same manner as in \Cref{dir_eig}, i.e.
\begin{equation*}
\mathcal{R}_{\pi,\mathbf{P}}(\boldsymbol\xi_{\alpha}) =1-\mathfrak{Re}(\vartheta_{\alpha}).
\end{equation*}

Therefore, we are now able to associate to each eigenvector $\boldsymbol\xi$ of $\mathbf{P}$ and generally to each eigenvector $\boldsymbol\xi_{\alpha}$ of a given $\bar{\mathbf{P}}_{\alpha}\in\bar{\mathcal{P}}$, a value $\omega$ characterizing its variation that we call \emph{frequency} expressed intuitively as 
\begin{equation}
\label{freq_def}
 \omega = 1- \mathfrak{Re}(\vartheta), \quad \omega \in [0,2].
\end{equation}

\begin{remark}
In the undirected setting, the random walk is reversible such that the Dirichlet energy associated to the random walk operator $\mathbf{P}$ is 
\begin{equation*}
\mathcal{D}_{\pi,\mathbf{P}}(\boldsymbol f)=\langle \boldsymbol f, \mathbf{L}_{\operatorname{RW}}\boldsymbol f\rangle_{\pi},
\end{equation*}
where $\mathbf{L}_{\operatorname{RW}}=\mathbf{I}-\mathbf{P}$. The random walk operator $\mathbf{P}$ is self-adjoint in $\ell^2(\mathcal{V},\pi)$. As a result, given an eigenvector $\boldsymbol\xi$ of $\mathbf{P}$ associated to an eigenvalue $\vartheta$, the associated Rayleigh quotient is 
\begin{equation*}
\mathcal{R}_{\pi,\mathbf{P}}(\boldsymbol\xi) =1-\vartheta.
\end{equation*}
Therefore, the variation of the eigenvectors of $\mathbf{P}$ is directly related to their respective eigenvalues. A similar result holds when the Fourier basis is based on the combinatorial Laplacian $\mathbf{L}=\mathbf{D}-\mathbf{W}$ thanks to the following identity
\begin{equation*}
\mathcal{D}_{\pi,\mathbf{P}}(\boldsymbol f)= \langle \boldsymbol f, \mathbf{L}_{\mathrm{RW}}\boldsymbol f\rangle_\pi = \langle \boldsymbol f, \mathbf{L}\boldsymbol f\rangle.
\end{equation*}
As $\mathbf{L}$ is a symmetric semi-definite operator, its eigenvalues are non-negative and real. Hence the Rayleigh quotient of an eigenvector $\boldsymbol{\phi}$ of $\mathbf{L}$ associated to an eigenvalue $\lambda$ is $\mathcal{R}_{\pi,\mathbf{P}}(\boldsymbol\phi) = \lambda$, which is the definition of frequency considered in~\cite{shuman2013emerging}.
\end{remark}

\subsubsection{On the content of subspaces associated to the random walk}
\label{mfrws}

The eigenvalues of an ergodic transition matrix $\mathbf{P}$ associated to a random walk on a strongly connected directed graph $\mathcal{G}$ are either real or come as complex-conjugate  pairs $(\vartheta,\bar{\vartheta})\in\mathbb{C}^2, \vartheta=\alpha+\boldsymbol i\beta$. 
% In the general case, a mono-frequency subspace can be composed of eigenspaces corresponding to any number of pairs of complex-conjugate eigenvalues and possibly one real eigenvalue. 
In the following, we examine how these individual eigenspaces can be understood with respect to each other. 

Let us first consider pairs of eigenspaces related to complex-conjugate eigenvalues $\alpha\pm\boldsymbol i\beta$. Assuming that the eigenspaces have dimension equal to 1, the eigenvectors can be chosen as complex-conjugate~$\boldsymbol\xi,\bar{\boldsymbol\xi}\in \mathbb{C}^N$. These eigenvectors share the same frequency $1-\alpha$ and can be seen as a generalization of the Fourier modes at frequencies $\omega$ and $-\omega$ in one-dimensional classical Fourier analysis. One can also define the corresponding real-valued Fourier modes 
\begin{equation*}
\boldsymbol\xi^{cos}=\frac{\boldsymbol\xi+\bar{\boldsymbol\xi}}{2},\qquad \boldsymbol\xi^{sin}=\frac{\boldsymbol\xi-\bar{\boldsymbol\xi}}{2\boldsymbol i},
\end{equation*}
which generalize the cosine and sine functions to graph signals. 
% A graph mono-frequency subspace spanned by only two complex-conjugate eigenvectors is thus analogous to the subspace spanned by the cosine and sine functions at a given frequency in classical Fourier analysis. 
As with the discrete Fourier transform, there are also frequencies for which the $\omega$ and $-\omega$ frequency subspaces are the same (the zero frequency and possibly the $1/2$ frequency in discrete signal processing). The case of real eigenvalues in graph signal processing can be seen as a generalization of these.

When the eigenspaces have dimension greater than 1, the situation is similar to the multidimensional classical Fourier analysis setting where the classical frequency is a vector $\vec{\omega}=[\omega_1,\ldots,\omega_n]^\top$. In graph signal processing, the $\omega$ frequency would then be a generalization of the norm of the classical frequency vector $\|\vec{\omega}\|$.

% Let us now consider two eigenvalues with the same real part but different non-conjugate imaginary parts. 
Continuing the analogy with classical Fourier analysis, in more than one dimensions Fourier modes with a given frequency $\omega$ are all oscillations at the same frequency but with \emph{different orientations}. We argue that the case in graph signal processing of eigenvalues with the same real part but different non-conjugate imaginary parts may be a generalization of \emph{different orientations}.
% can be seen as linear combinations of, say, vertical and horizontal Fourier modes with the same frequency, which are two \emph{different orientations}. 
This is illustrated in section \ref{torus_sec} on the directed toroidal graph.

\subsection{Random walk graph filters}
We define a random walk graph filter as a LSI filter with respect to the reference operator $\mathbf{P}$:
\begin{equation*}
\mathbf{H} = \sum_{t=0}^T h_t\mathbf{P}^{t}.
\end{equation*}
Assuming that $\mathbf{P}$ is diagonalizable, we can represent a graph filter $\mathbf{H}$ as a linear combination of the spectral projectors $\{\mathbf{E}_{\vartheta_k}\}_{k=1}^m$ associated with $\mathbf{P}$, as defined in~\eqref{fil_sp_proj}:
\begin{equation*}
\mathbf{H}=\sum_{k=1}^m\gamma_k\mathbf{E}_{\vartheta_k},\quad \gamma_k\in\mathbb{C},\quad k=1,\ldots,m.
\label{eq:spectraldomain_filter}
\end{equation*}

The filter is characterized in the Fourier domain by its spectral response coefficients, or Fourier coefficients, $\gamma_k$, $k=1,\ldots,m$. As in classical signal processing, the application of a filter to a signal in the Fourier domain is obtained by multiplying the filter's Fourier coefficients with the signal's corresponding Fourier coefficients. A particularity here is that a filter may only have one Fourier coefficient for a Fourier subspace of dimension larger than 1 while the signal is generally described by as many coefficients as the dimension of the subspace.

From a filter design point of view, we may also be interested in filters defined by a frequency response. This is a special case of \eqref{eq:spectraldomain_filter} where the spectral coefficients only depend on the frequency: $\gamma_k = h(\omega_k) =  h(1-\real{\theta_k})$, where $h:[0,2]\mapsto\mathbb{C}$ is the frequency response of the filter.

Let us define the projector associated to the frequency $\omega$ as
\begin{equation}
 \mathbf{S}_{\omega}=\sum_{\vartheta : 1-\mathfrak{Re}(\vartheta)=\omega} \mathbf{E}_{\vartheta}
  \label{eq:mono_freq_proj}
\end{equation}
It is a projector that projects on the sum of the eigenspaces corresponding to the frequency $\omega$. A filter based on a frequency response $h$ can be expressed as
\begin{equation}
 \label{rwgf_freq_resp}
 \mathbf{H}_{\boldsymbol\omega}=\sum_{\omega\in\boldsymbol{\omega}} h(\omega)\mathbf{S_{\omega}}, \quad\omega\in[0,2]. 
\end{equation}

A major advantage of such a formulation is that it allows one to shift, contract or dilate a filter along the frequency axis by simply applying these operations to its frequency response. Unfortunately, this is not possible for all graph filters because a filter may have different Fourier coefficients $\gamma_k$ for different eigenspaces even when they share the same frequency $\omega$.

\subsection{Fourier analysis on finite groups: a graph signal perspective}
\label{fa_fg}

In order to show the consistency of our Fourier analysis with respect to  traditional signal processing, we depict the directed counterparts of two well-known objects: the cycle graph associated with the cyclic group $(\mathbb{Z}/ n\mathbb{Z})$ and the toroidal graph that is the direct product of cyclic groups, i.e. $\mathbb{T}=(\mathbb{Z}/ n_1\mathbb{Z})\oplus\dots\oplus (\mathbb{Z}/ n_r\mathbb{Z})$ \cite{terras1999fourier}.

\subsubsection{Fourier analysis on the directed cycle graph}
\label{fa_dcg}
A directed cycle graph $\mathcal{C}_N$ is a graph with $N$ vertices containing a single cycle through all the vertices and were all the edges are directed in the same direction.
The directed cycle graph $\mathcal{C}_N$ is represented by its adjacency matrix $\mathbf{C}_N$ which is circulant
\begin{equation*}
\mathbf{C}_N=
\begin{bmatrix}
 0&0&\ldots&0&1\\
 1&0&\ldots&0&0\\
 0&\ddots&\ddots&\vdots&\vdots\\
 \vdots&\ddots&\ddots&0&0\\
 0&\ldots&0&1&0
\end{bmatrix}
\end{equation*}

As the out-degree matrix of $\mathcal{C}_N$ is $\mathbf{D}=\mathbf{I}_N$, the transition matrix for the random walk random walk $\mathcal{X}$ on $\mathcal{C}_N$ is $\mathbf{P}=\mathbf{C}_N$.
% we define a random walk $\mathcal{X}$ on $\mathcal{C}_N$ whose transition matrix is characterized by its adjacency matrix, namely $\mathbf{P}=\mathbf{C}_N$. 
$\mathbf{P}$ is diagonalizable, i.e. $\mathbf{P}=\boldsymbol\Xi\boldsymbol\Theta\boldsymbol\Xi^{-1}$ with $\boldsymbol\Theta=\operatorname{diag}\{\vartheta_1,\cdots,\vartheta_N\}\in\mathbb{C}^{N\times N}$ the eigenvalue matrix  
where

\begin{equation*}
\vartheta_k=e^{2\pi i (k-1)/N},\, k=1,\dots,N.
\end{equation*}
and $\boldsymbol\Xi=[\boldsymbol\xi_1,\cdots,\boldsymbol\xi_N]\in\mathbb{C}^{N\times N}$ is the discrete Fourier basis where 
\begin{equation*}
\boldsymbol\xi_k=\frac{1}{\sqrt{N}}[1,\vartheta_k^1,\dots,\vartheta_k^{N-1}]^{\top}\quad k=1,\cdots,N.
\end{equation*}

The random walk $\mathcal{X}$ is irreducible and periodic. As our frequency analysis only deals with ergodic random walks, we need $\mathcal{X}$ to be aperiodic. In order to overcome the periodicity issue, we consider a $\gamma$-lazy random walk $\tilde{\mathcal{X}}$ characterized by its transition matrix $\tilde{\mathbf{P}}_{\gamma}\in\tilde{\mathcal{P}}$ defined at section \ref{lazyrw_sec}. For such a lazy random walk, the stationary distribution $\pi$ exists and is constant. As a consequence, the Rayleigh quotient associated to a given eigenvector $\boldsymbol\xi_k$ is given by 

\begin{equation*}
\mathcal{R}^{(\gamma)}_{\pi,\mathbf{P}}(\boldsymbol\xi_k)=(1-\gamma)\bigg[1-\cos\bigg(\frac{2\pi(k-1)}{N}\bigg)\bigg]=\tilde \omega^{(\gamma)}_k\,,\quad k=1,\cdots,N.
\end{equation*}

Since the $\gamma$-lazy random walks $\tilde{\mathcal{X}}$ have the same eigenspaces as the non-lazy random walk, taking the limit $\gamma\rightarrow 0$ of these Rayleigh quotients allows us to define a frequency for the eigenspaces of the non-lazy random walk too.

\begin{equation*}
\omega_k=\bigg[1-\cos\bigg(\frac{2\pi(k-1)}{N}\bigg)\bigg],\quad k=1,\cdots,N.
\end{equation*}

Thus, the ordering of the frequencies $\{\omega_k\}_{k=1}^N$ associated with the eigenvectors $\{\boldsymbol\xi_k\}_{k=1}^N$ by the Rayleigh quotient coincides exactly with the classical signal processing approach. However, the actual values of the frequencies are warped by the function $x\mapsto 1-\cos(x)$ compared to their classical counterparts. As the warping could in theory be reversed by modifying our definition of frequency, the notion of frequency that we propose is consistent with the classical notion of frequency.

Finally, the trick we used here with taking the limit $\gamma\rightarrow 0$ of $\gamma$-lazy random walks could be used for any graph where the random walk is irreducible but periodic. This effectively allows us to define a meaningful notion of frequency for any graph that is strongly connected.
% By taking the limit $\gamma\rightarrow 0$, we could define frequencies for non-lazy random walks. More generally, this trick may be used to define a notion of frequency for any graph where the random walk is irreducible but periodic.
% More generally, our Fourier analysis works perfectly with irreducible random walks by taking the same kind of limit.

\subsubsection{Fourier analysis on the directed toroidal graph}
\label{torus_sec}
A directed toroidal graph $\mathcal{T}_{m,n}$ is the Cartesian product of the directed cycle graphs $\mathcal{C}_m$ and $\mathcal{C}_n$, namely $\mathcal{T}_{m,n}=\mathcal{C}_m\square \mathcal{C}_n$. We introduce necessary definitions for the understanding of the section.

\begin{definition}[\cite{kaveh2013optimal}]
Let $\mathcal{G}=(\mathcal{U},\mathcal{E})$ and $\mathcal{H}=(\mathcal{V},\mathcal{F})$ be two graphs with respective vertex sets $\mathcal{U}=(u_1,\dots,u_n)$ and $\mathcal{V}=(v_1,\dots,v_m)$.
The Cartesian product of $\mathcal{G}$ and $\mathcal{H}$ is the graph $\mathcal{G} \square \mathcal{H}$ with vertex set $\mathcal{U} \times \mathcal{V}$ in which two vertices $x=(u_i,v_j)$ and $y=(u_p,v_q)$ are adjacent if and only if either $u_i=u_p$ and $(v_j,v_q)\in \mathcal{F}$ or $v_j=v_q$ and $(u_i,u_p)\in \mathcal{E}$. 
\end{definition}

\begin{definition}[\cite{kaveh2013optimal}]
Let $\mathcal{G}=(\mathcal{U},\mathcal{E})$ and $\mathcal{H}=(\mathcal{V},\mathcal{F})$ be two graphs. The adjacency matrix of the Cartesian product $\mathcal{G} \square \mathcal{H}$, denoted $\mathbf{A}_{\mathcal{G} \square \mathcal{H}}$ is
\begin{equation*}
\mathbf{A}_{\mathcal{G} \square \mathcal{H}}=\mathbf{A}_{\mathcal{G}} \otimes\mathbf{I}_{|\mathcal{V}|}+\mathbf{I}_{|\mathcal{U}|}\otimes \mathbf{A}_{\mathcal{H}}. 
\end{equation*}
where $\otimes$ is the Kronecker product symbol. 

\end{definition}

\begin{lemma}
Suppose $\lambda_1,\cdots,\lambda_n$ are eigenvalues of $\mathbf{A}_{\mathcal{G}}$ and $\mu_1,\cdots,\mu_m$ are eigenvalues of $\mathbf{A}_{\mathcal{H}}$. Then the eigenvalues of $\mathbf{A}_{\mathcal{G} \square \mathcal{H}}$ are all $\lambda_i+\mu_j$ for $1\leq i\leq n$ and $1\leq j \leq m$. Moreover, if $u$ and $v$ are eigenvectors for $\mathbf{A}_{\mathcal{G}}$ and $\mathbf{A}_{\mathcal{H}}$ with eigenvalues $\lambda$ and $\mu$ respectively, then the vector $\mathfrak{w}=u\otimes v$ is an eigenvector of $\mathbf{A}_{\mathcal{G} \square \mathcal{H}}$ with eigenvalue $\lambda+\mu$.
\end{lemma}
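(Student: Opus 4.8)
The plan is to work directly from the Kronecker-sum expression $\mathbf{A}_{\mathcal{G}\square\mathcal{H}}=\mathbf{A}_{\mathcal{G}}\otimes\mathbf{I}_{m}+\mathbf{I}_{n}\otimes\mathbf{A}_{\mathcal{H}}$ supplied by the previous definition, together with the mixed-product property of the Kronecker product, namely $(\mathbf{A}\otimes\mathbf{B})(\mathbf{C}\otimes\mathbf{D})=(\mathbf{A}\mathbf{C})\otimes(\mathbf{B}\mathbf{D})$ whenever the matrix products $\mathbf{A}\mathbf{C}$ and $\mathbf{B}\mathbf{D}$ are defined. Everything reduces to one short computation plus a linear-algebra dimension count.

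First I would establish the ``moreover'' part, which is the computational core. Let $u$ be an eigenvector of $\mathbf{A}_{\mathcal{G}}$ with eigenvalue $\lambda$ and $v$ an eigenvector of $\mathbf{A}_{\mathcal{H}}$ with eigenvalue $\mu$, and set $\mathfrak{w}=u\otimes v$, which is nonzero since $u,v\neq 0$. Applying the mixed-product property twice gives
\begin{align*}
(\mathbf{A}_{\mathcal{G}}\otimes\mathbf{I}_{m})\,\mathfrak{w} &=(\mathbf{A}_{\mathcal{G}}u)\otimes(\mathbf{I}_{m}v)=(\lambda u)\otimes v=\lambda\,\mathfrak{w},\\
(\mathbf{I}_{n}\otimes\mathbf{A}_{\mathcal{H}})\,\mathfrak{w} &=(\mathbf{I}_{n}u)\otimes(\mathbf{A}_{\mathcal{H}}v)=u\otimes(\mu v)=\mu\,\mathfrak{w},
\end{align*}
and summing the two identities yields $\mathbf{A}_{\mathcal{G}\square\mathcal{H}}\,\mathfrak{w}=(\lambda+\mu)\,\mathfrak{w}$, so $\mathfrak{w}$ is an eigenvector of $\mathbf{A}_{\mathcal{G}\square\mathcal{H}}$ with eigenvalue $\lambda+\mu$.

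It then remains to argue that the numbers $\lambda_i+\mu_j$ exhaust the spectrum of $\mathbf{A}_{\mathcal{G}\square\mathcal{H}}$. In the setting of this paper $\mathbf{A}_{\mathcal{G}}$ and $\mathbf{A}_{\mathcal{H}}$ are diagonalizable, so one may pick eigenbases $\{u_1,\dots,u_n\}$ of $\mathbb{C}^{n}$ and $\{v_1,\dots,v_m\}$ of $\mathbb{C}^{m}$; a standard property of the Kronecker product is that the family $\{u_i\otimes v_j : 1\le i\le n,\ 1\le j\le m\}$ is then linearly independent, hence a basis of $\mathbb{C}^{nm}$. By the first part each such vector is an eigenvector of $\mathbf{A}_{\mathcal{G}\square\mathcal{H}}$ with eigenvalue $\lambda_i+\mu_j$, so this basis diagonalizes $\mathbf{A}_{\mathcal{G}\square\mathcal{H}}$ and its list of eigenvalues, counted with multiplicity, is exactly $\{\lambda_i+\mu_j\}_{i,j}$.

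The only genuinely delicate point is this exhaustion step: if one wanted the statement for adjacency matrices that are not diagonalizable, linear independence of the $u_i\otimes v_j$ would have to be replaced by an argument on generalized eigenspaces (Jordan form of the Kronecker sum) or by the spectral mapping theorem. I expect the argument above to be sufficient here, since the graphs actually used — directed cycles and their Cartesian products, and more generally the diagonalizable random-walk setting assumed throughout the paper — have diagonalizable adjacency matrices.
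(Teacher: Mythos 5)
Your proof is correct and complete: the mixed-product computation establishes the ``moreover'' part, and the dimension count over the tensored eigenbases correctly shows that the $nm$ values $\lambda_i+\mu_j$ exhaust the spectrum with multiplicity. The paper itself states this lemma without proof (it is a standard fact about Kronecker sums, implicitly taken from the cited reference on graph products), so there is no argument to compare against; yours is the standard one, and you rightly flag that diagonalizability is what makes the exhaustion step elementary here --- for general matrices one would instead simultaneously upper-triangularize $\mathbf{A}_{\mathcal{G}}$ and $\mathbf{A}_{\mathcal{H}}$ via Schur decompositions, which triangularizes the Kronecker sum and reads off the same eigenvalues without any assumption. In the paper's application (directed cycles and their products, whose adjacency matrices are circulant and hence diagonalizable) your hypothesis is satisfied.
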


\subsubsection*{Directed toroidal graph}
Let $\mathcal{T}_{m,n}=\mathcal{C}_m \square \mathcal{C}_n$ be the directed toroidal graph characterized by its adjacency matrix $\mathbf{A}_{\mathcal{T}_{m,n}}=\mathbf{C}_m \otimes\mathbf{I}_{n}+\mathbf{I}_{m}\otimes \mathbf{C}_n$. The eigenvalues of $\mathbf{C}_m$ are $\lambda_k=e^{2i\pi(k-1)/m}$ for $k \in\ldbrack 1,m \rdbrack$ and the eigenvectors are denoted by $[\boldsymbol\phi_1,\ldots,\boldsymbol\phi_m]$. Identically, the eigenvalues of $\mathbf{C}_n$ are $\mu_\ell=e^{2i\pi(\ell-1)/n}$ for $\ell \in \ldbrack 1,n \rdbrack$ and the eigenvectors are denoted by $[\boldsymbol\psi_1,\ldots,\boldsymbol\psi_n]$. The directed toroidal graph $\mathcal{T}_{m,n}$ is a directed 2-regular graph. Consequently, the out-degree matrix of $\mathcal{T}_{m,n}$ is $\mathbf{D}_{\mathcal{T}_{m,n}}=2\mathbf{I}_{m\times n}$. 
We thus define a random walk $\mathcal{X}$ on $\mathcal{T}_{m,n}$ with a diagonalizable transition matrix $\mathbf{P}=\mathbf{D}_{\mathcal{T}_{m,n}}^{-1}\mathbf{A}_{\mathcal{T}_{m,n}}$. We specify its spectral properties in the following lemma. 

\begin{lemma}
Let $\mathcal{T}_{m,n}$ be a directed toroidal graph and $\mathcal{X}$ be a random walk associated with the diagonalizable transition matrix $\mathbf{P}$. The directed toroidal graph $\mathcal{T}_{m,n}$ is a directed 2-regular graph. Consequently, the spectrum of $\mathbf{P}$ is
\begin{equation*}
\sigma(\mathbf{P})=\bigg\{\zeta_{i,j}=\frac{\lambda_i+\mu_j}{2},\, i \in \ldbrack 1,m \rdbrack, j \in \ldbrack 1,n \rdbrack \bigg\}, 
\end{equation*}
with associated eigenvectors

\begin{equation*}
\bigg\{\boldsymbol\phi_i\otimes\boldsymbol\psi_j, i \in \ldbrack 1,m \rdbrack, j \in \ldbrack 1,n \rdbrack \bigg\}.
\end{equation*}
\end{lemma}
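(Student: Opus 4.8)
The plan is to reduce the statement to the lemma on Cartesian products stated just above, using only the fact that $\mathcal{T}_{m,n}$ is directed $2$-regular. First I would record that $2$-regularity gives $\mathbf{D}_{\mathcal{T}_{m,n}}=2\mathbf{I}_{mn}$, so that
\begin{equation*}
\mathbf{P}=\mathbf{D}_{\mathcal{T}_{m,n}}^{-1}\mathbf{A}_{\mathcal{T}_{m,n}}
=\tfrac12\,\mathbf{A}_{\mathcal{T}_{m,n}}
=\tfrac12\big(\mathbf{C}_m\otimes\mathbf{I}_n+\mathbf{I}_m\otimes\mathbf{C}_n\big).
\end{equation*}
Thus $\mathbf{P}$ is just a scalar multiple of the adjacency matrix of the Cartesian product, and its eigenvectors coincide with those of $\mathbf{A}_{\mathcal{T}_{m,n}}$ while its eigenvalues are those of $\mathbf{A}_{\mathcal{T}_{m,n}}$ divided by $2$.

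Next I would invoke the preceding lemma with $\mathcal{G}=\mathcal{C}_m$ and $\mathcal{H}=\mathcal{C}_n$. If $\boldsymbol\phi_i$ is an eigenvector of $\mathbf{C}_m$ for the eigenvalue $\lambda_i=e^{2i\pi(i-1)/m}$ and $\boldsymbol\psi_j$ an eigenvector of $\mathbf{C}_n$ for $\mu_j=e^{2i\pi(j-1)/n}$, then $\boldsymbol\phi_i\otimes\boldsymbol\psi_j$ is an eigenvector of $\mathbf{A}_{\mathcal{T}_{m,n}}$ with eigenvalue $\lambda_i+\mu_j$; a one-line check (or the identities $(\mathbf{C}_m\otimes\mathbf{I}_n)(\boldsymbol\phi_i\otimes\boldsymbol\psi_j)=\lambda_i(\boldsymbol\phi_i\otimes\boldsymbol\psi_j)$ and $(\mathbf{I}_m\otimes\mathbf{C}_n)(\boldsymbol\phi_i\otimes\boldsymbol\psi_j)=\mu_j(\boldsymbol\phi_i\otimes\boldsymbol\psi_j)$ from the mixed-product property of $\otimes$) then yields
\begin{equation*}
\mathbf{P}\,(\boldsymbol\phi_i\otimes\boldsymbol\psi_j)
=\tfrac{\lambda_i+\mu_j}{2}\,(\boldsymbol\phi_i\otimes\boldsymbol\psi_j)
=\zeta_{i,j}\,(\boldsymbol\phi_i\otimes\boldsymbol\psi_j),
\qquad i\in\ldbrack 1,m\rdbrack,\; j\in\ldbrack 1,n\rdbrack.
\end{equation*}

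Finally I would argue that these $mn$ pairs exhaust the spectrum. Since $\{\boldsymbol\phi_i\}_{i=1}^m$ is a basis of $\mathbb{C}^m$ (the discrete Fourier basis recalled in section~\ref{fa_dcg}) and $\{\boldsymbol\psi_j\}_{j=1}^n$ a basis of $\mathbb{C}^n$, the family $\{\boldsymbol\phi_i\otimes\boldsymbol\psi_j\}$ is a basis of $\mathbb{C}^{mn}$; this simultaneously re-confirms that $\mathbf{P}$ is diagonalizable and shows that $\sigma(\mathbf{P})=\{\zeta_{i,j}\}$ with the stated eigenvectors. I do not expect a genuine obstacle here; the only point to state carefully is that the $\zeta_{i,j}$ need not be pairwise distinct, so the conclusion is a complete list of eigenpairs (equivalently, an equality of multisets counted with multiplicity) rather than a bijection between index pairs and distinct eigenvalues — which is exactly the situation anticipated in section~\ref{mfrws} where several orientations may share one frequency.
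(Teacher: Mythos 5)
Your argument is correct and is exactly the route the paper intends: the paper states this lemma without proof, treating it as immediate from the preceding Cartesian-product lemma together with $\mathbf{D}_{\mathcal{T}_{m,n}}=2\mathbf{I}$, which gives $\mathbf{P}=\tfrac12\mathbf{A}_{\mathcal{T}_{m,n}}$ and hence the eigenpairs $\big(\tfrac{\lambda_i+\mu_j}{2},\,\boldsymbol\phi_i\otimes\boldsymbol\psi_j\big)$. Your added remarks on completeness of the tensor basis and on the $\zeta_{i,j}$ possibly coinciding (a multiset equality, not a bijection) are correct and make explicit what the paper leaves implicit.
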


For the cycle graph, we observed that our notion of frequency was consistent with the classical notion of frequency used in signal processing up to some monotonous transform. This is only approximately true for the toroidal graph. Indeed, it is usually possible to find pairs of eigenvectors of the toroidal graph for which the frequency order differs compared to the classical case, although it does not seem to occur when the frequency difference is large enough. An example where classical frequencies and graph frequencies behave slightly differently is presented below.

% In two dimensions, the classical frequency of a wave form $(x,y)\mapsto \exp(i \omega_x x + \omega_y y)$ associated with the frequencies $\omega_x$ and $\omega_y$ in the $x$ and $y$ directions would be $\omega = \sqrt(\omega_x^2 + \omega_y^2)$. 
% which is notably different from $\omega = 1-(\cos\omega_x +\cos\omega_y)/2$, which is verified by the . 

\begin{figure}
\begin{center}
\includegraphics[width=0.65\columnwidth]{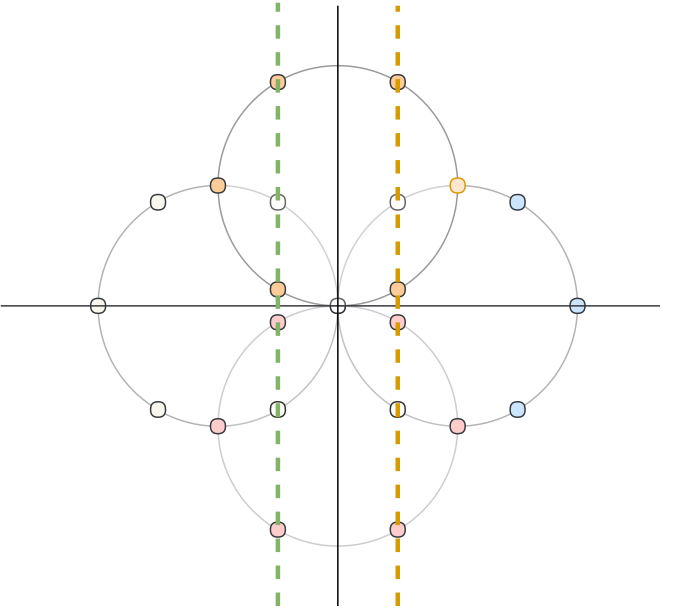}
\end{center}
 \caption{Eigenvalue distribution of the random walk matrix of a directed toroidal graph $\mathcal{T}_{6,4}$. Eigenvalues on the dotted green line: $\mathfrak{Re}(\lambda^{g})=-0.25, \forall \lambda^{g}\in\boldsymbol\Lambda_g$. Eigenvalues on the dotted orange line: $\mathfrak{Re}(\lambda^{o})=0.25, \forall \lambda^{o}\in\boldsymbol\Lambda_o$.}
 \label{fig:eig_torus}
 \end{figure}
 
 As in \cref{fa_dcg}, the associated random walk $\mathcal{X}$ is irreducible and periodic. In order to overcome this periodicity issue, we consider a $\gamma$-lazy random walk $\tilde{\mathcal{X}}$ with transition matrix $\tilde{\mathbf{P}}_{\gamma}\in\tilde{\mathcal{P}}$.
 
 Figure \ref{fig:eig_torus}
 illustrates the eigenvalue distribution of the transition matrix of a directed toroidal graph. As we can appreciate, the distribution of eigenvalues is remarkable. We denote by $\boldsymbol\Lambda_{g}$, the set of eigenvalues located on the green dotted line and $\boldsymbol\Lambda_{o}$ the set of eigenvalues located on the orange dotted line. Sets $\boldsymbol\Lambda_{g}$, and $\boldsymbol\Lambda_{o}$, include respectively eigenvalues having the same real part and different imaginary parts i.e. $\mathfrak{Re}(\boldsymbol\Lambda_{g})=-0.25$ and $\mathfrak{Re}(\boldsymbol\Lambda_{o})=0.25$. In order to illustrate the insights discussed in section \ref{mfrws}, we exhibit in figure~\ref{fig:eig_tore_graph} the two-dimensional representation of some eigenvectors from the transition matrix of a directed toroidal graph $\mathcal{T}_{36,60}$ corresponding to a graph frequency $\omega=(7-\sqrt{5})/8$. The eigenvalues associated with these eigenvectors have identical real parts but different non conjugate imaginary parts. Analytically, the Rayleigh quotients of the selected eigenvectors are equal, as explained in section~\ref{mfrws}. As we can see in figure~\ref{fig:eig_tore_graph}, the selected eigenvectors are all (discretized) complex exponential waveforms with what looks like identical or very similar frequencies. The main difference between them appears to be their orientation, which seems related to the different imaginary parts of the eigenvalues. On a side note, we also observe that the graph frequencies of these 4 eigenvectors are identical but this is not exactly true for their classical frequencies, which are equal to $0.260$, $0.269$, $0.260$ and $0.269$ from top to bottom respectively.

  % This orientation difference is associated to the influence of the imaginary parts of the eigenvalues of the selected eigenvectors. 

\begin{figure}
  \centering
  \includegraphics[]{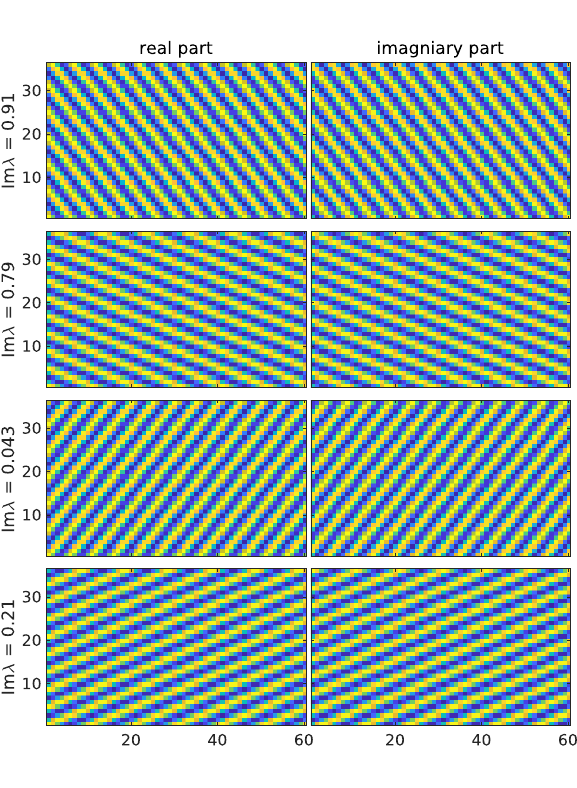}
  \caption{Two-dimensional representation of the eigenvectors for the toroidal graph $\mathcal{T}_{36,60}$ with a graph frequency equal to $(7-\sqrt{5})/8$. Each row shows one eigenvector, with its real part on the left hand side and its imaginary part on the right hand side. Among the 8 eigenvectors with this frequency, we only show those corresponding to eigenvalues with a non-negative imaginary part (shown on the left hand side).}
  \label{fig:eig_tore_graph}
\end{figure}

% \begin{figure}
%  \includegraphics[width=0.90\columnwidth]{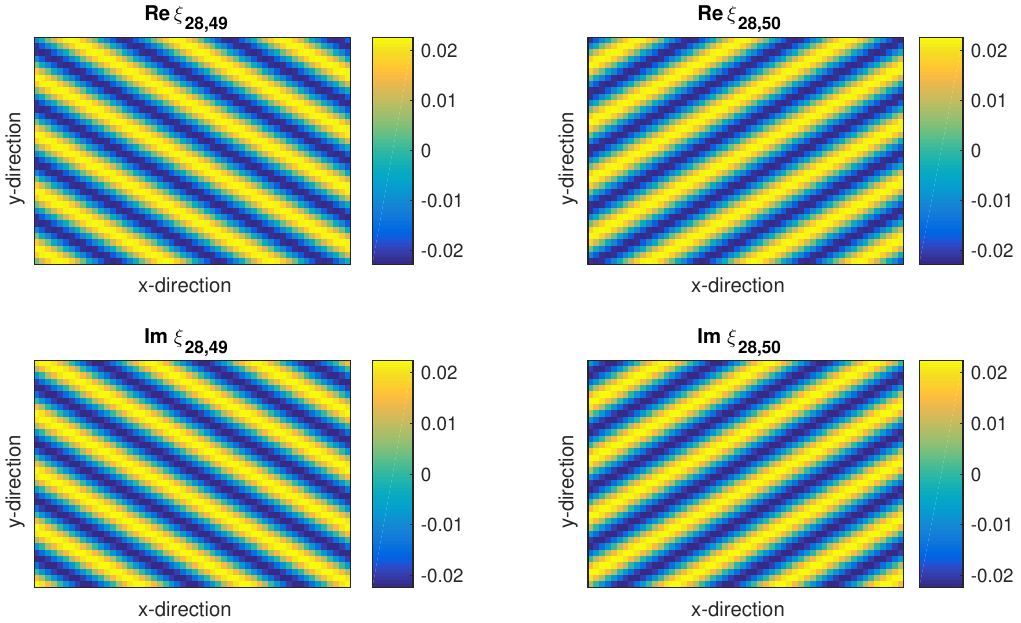}
%  \caption{Two-dimensional representation of the eigenvectors $\boldsymbol\xi_{28,49}$ and $\boldsymbol\xi_{28,50}$ from the transition matrix of the directed toroidal $\mathcal{T}_{54,36}$. The axis is the label of a vertex in $\mathbf{C}_m$ (resp. $\mathbf{C}_m $) in the $x$-direction (resp. $y$-direction).}
% \label{eig_tore_graph}
%  \end{figure}

\section{Applications}
\label{sect:applications_one}
This section is devoted to illustrating our theoretical framework through examples of semi-supervised learning and signal modeling on directed graphs. It turns our that  in the following applications, we are able to work either in the Hilbert space $\ell^2(\mathcal{V})$ or
$\ell^2(\mathcal{V},\pi)$ where $\pi$ is the unique stationary distribution of an ergodic random walk. Furthermore, by the results obtained in section~\ref{lino_hilsp}, we define the isometries $\varphi$ and $\varphi^{-1}$ going from $\ell^2(\mathcal{V})$ to $\ell^2(\mathcal{V},\pi)$. 
\begin{equation}
 \label{isometric_op_inv}
 \varphi:\boldsymbol f\mapsto\boldsymbol\Pi^{-1/2}\boldsymbol f,\quad\varphi^{-1}:\boldsymbol f\mapsto\boldsymbol\Pi^{1/2}\boldsymbol f,\quad \forall \boldsymbol f\in\ell^2(\mathcal{V},\pi).
\end{equation}

For the sake of simplicity, we will always map signals living in  $\ell^2(\mathcal{V},\pi)$ to $\ell^2(\mathcal{V})$, so that we can work in $\ell^2(\mathcal{V})$ where the inner product coincides with the one between vectors in $\mathbb{C}^N$. In practice, given a graph signal $\boldsymbol f$ measured in the real world. We must first decide in which space the signal lives. This is a somewhat arbitrary choice between the signal living in a space associated with the counting measure or in a space associated with the stationary measure $\pi$ of an ergodic random walk. If we decide that the signal lives in $\ell^2(\mathcal{V},\pi)$, we then convert the signal into $\ell^2(\mathcal{V})$ using the application $\varphi^{-1}$, we perform all the necessary operations in $\ell^2(\mathcal{V})$ and then go back to the  $\ell^2(\mathcal{V},\pi)$ space using $\varphi$. In $\ell^2(\mathcal{V})$, the operator $\mathbf{T}$ defined in~\cref{rw_ldeux} corresponds to the random walk operator and $\mathcal{L}$ is the operator such that $\langle \boldsymbol f,\mathcal{L} \boldsymbol f\rangle$ is the Dirichlet energy.

\subsection{Semi-supervised learning on directed graph via $\ell_2$-regularization}
\label{sec:semi_sup_l_two_norm}
We discuss the semi-supervised learning problem on directed graphs with a regularization term of type $\ell_2$. The following problem aims to show the efficiency and relevance of the Dirichlet energy \eqref{dir_nrj} as a regularization term for signal processing on directed graphs.

\subsubsection{Definition of the problem}
Let $\mathcal{G}=(\mathcal{V},\mathcal{E})$ be a strongly connected directed graph where $\mathcal{V}=(v_1,\ldots,v_N)$ is the vertex set, $\mathcal{E}$ is the edge set and with cardinality $|\mathcal{V}|=N$. 
The directed graph $\mathcal{G}$ is represented by its adjacency matrix $\mathbf{W}\in\mathbb{R}_{+}^{N\times N}$. Let $\mathcal{X}$ be the associated random walk on $\mathcal{G}$. The random walk $\mathcal{X}$ is represented by its random walk operator $\mathbf{P}\in\mathbb{R}_{+}^{N\times N}$ with unique stationary distribution $\pi$.

We also introduce $\mathcal{G}_{\operatorname{sym}}$ the symmetrized version of the directed graph $\mathcal{G}$. The undirected graph $\mathcal{G}_{\operatorname{sym}}$ is represented by its adjacency matrix $\mathbf{W}_{\operatorname{sym}}=(\mathbf{W}+\mathbf{W}^{\top})/2\in\mathbb{R}_{+}^{N\times N}$. Let $\mathcal{X}_{\operatorname{sym}}$ be the associated random walk on $\mathcal{G}_{\operatorname{sym}}$. The random walk $\mathcal{X}_{\operatorname{sym}}$ is represented by its random walk operator $\mathbf{P}_{\operatorname{sym}}\in\mathbb{R}_{+}^{N\times N}$ with unique stationary distribution $\pi_{\operatorname{sym}}$.

Let $y:\mathcal{V}\rightarrow\{-1,1\}$ be a ``label'' function defined on the vertex set $\mathcal{V}$. From the function $y$, we obtain the graph signal $\boldsymbol y$ defined as follows
\begin{equation*}
\boldsymbol{y}=[y(v_1),\ldots,y(v_N)]^{\top}.
\end{equation*}

Given the values $y(v_k)_{\{1\leq k\leq N\}}$ on only a subset of labeled vertices $\mathcal{U}\subset\mathcal{V}$, the aim is to estimate the labels of the remaining unlabeled vertices. 
This is the semi-supervised learning problem on graphs~\cite{zhou2005learning,belkin2004regularization,zhu2005semi}. We introduce the formulation of the problem in the next section.  

% , and the classical approach \cite{belkin2004regularization} is to solve the following problem.

\subsubsection{Considered approaches}
The standard approach with $\ell_2$ regularization is based on the following optimization problem:
\begin{equation}
\label{semi_sup_graph_gnl}
\underset{\boldsymbol{f}}{\operatorname{argmin}}\big\{c\|\mathbf{M}_{l}(\boldsymbol{f}-\boldsymbol{y})\|^2+\varrho_1\|\mathbf{M}_{u}\boldsymbol f\|^2+\varrho_2\mathcal{S}(\boldsymbol{f})\big\},  \quad c,\varrho_1,\varrho_2 > 0.
\end{equation}
The term $\|\mathbf{M}_{l}(\boldsymbol{f}-\boldsymbol{y})\|^2$ is the data fidelity term with $\mathbf{M}_{l}=\operatorname{diag}\{m_1,\ldots,m_N\}$ a diagonal matrix where $m_i=\mathbb{1}_{v_i\in\mathcal{U}}$, for all $i=1\ldots,N$ (hence 0 on vertices with unknown labels). In the second term, $\|\mathbf{M}_{u}\boldsymbol f\|^2$, $\mathbf{M}_u$ selects the unlabeled vertices and is defined as $\mathbf{M}_u=\mathbf{I}_N-\mathbf{M}_l$. This second term is a Tikhonov-like regularization term. The third term, $\mathcal{S}(\boldsymbol f)$, is the variational regularization term derived from the graph
 \begin{equation*}
 \mathcal{S}(\boldsymbol f)=\langle \boldsymbol f,\mathbf{X}\boldsymbol f\rangle, \quad \mathbf{X}\in\mathbb{R}^{N\times N}.
 \end{equation*}

The semi-supervised learning on graphs problem is formulated in the same manner for the directed graph $\mathcal{G}$ or its symmetrized version $\mathcal{G}_{\operatorname{sym}}$, except that the variational regularization term $\mathcal{S}(\boldsymbol f)$ differs. 
In all these situations, for learning two classes, the solution is  obtained by constraining the previous continuous problem of having labels taking values only in $\{+1,-1\}$; a relaxed version of the problem is to solve the continuous optimization problem and take the sign of its solution. 
We now compare three methods for the directed case and the symmetrized case. 

\paragraph{(a) Semi-supervised learning on $\mathcal{G}$, the directed graph:}
\begin{case}
\item The first method is a version of eq.~\eqref{semi_sup_graph_gnl}, that was previously investigated by~\cite{zhou2005learning}. Assuming the graph signal lives in $\ell^2(\mathcal{V})$, the optimization problem is

\begin{equation}
\label{semi_sup_dir_zhu}
\underset{\boldsymbol{f}}{\operatorname{argmin}}\big\{c\|\mathbf{M}_{l}(\boldsymbol f-\boldsymbol y)\|^2+c\|\mathbf{M}_{u}\boldsymbol f\|^2+\varrho_2\langle \boldsymbol f,\mathcal{L} \boldsymbol f\rangle\big\},  \quad c,\varrho_2 > 0.
\end{equation}
with $\mathcal{L}$ the directed normalized Laplacian \eqref{eq:dir_norm_laplacian}. The term $\langle \boldsymbol f,\mathcal{L} \boldsymbol f\rangle$ is the Dirichlet energy in $\ell^2(\mathcal{V})$.

\item  For this method, we also use the formulation~\eqref{semi_sup_dir_zhu}, except that we now assume that the graphs signal lives in $\ell^2(\mathcal{V},\pi)$. Hence, we define $\tilde{\boldsymbol y} \in\ell^2(\mathcal{V})$ as follows 
\begin{equation*}
\tilde{\boldsymbol y}=\varphi^{-1}(\boldsymbol y)=\boldsymbol\Pi^{1/2}\boldsymbol y, \quad \boldsymbol y\in\ell^2(\mathcal{V},\pi).
\end{equation*}
The optimization problem is therefore 
\begin{equation}
\label{semi_sup_dir_zhu_pi}
\underset{\tilde{\boldsymbol f}}{\operatorname{argmin}}\big\{c\|\mathbf{M}_{l}(\tilde{\boldsymbol f}-\tilde{\boldsymbol y})\|^2+c\|\mathbf{M}_{u}\tilde{\boldsymbol f}\|^2+\varrho_2\langle \tilde{\boldsymbol f},\mathcal{L} \tilde{\boldsymbol f}\rangle\big\},  \quad c,\varrho_2 > 0.
\end{equation}

\Cref{semi_sup_dir_zhu_pi} is the same as \cref{semi_sup_dir_zhu} except for $\boldsymbol y$ which is now $\tilde{\boldsymbol y}$. The optimization problem~\eqref{semi_sup_dir_zhu_pi} can be rewritten as
\begin{equation}
\label{semi_sup_dir_re}
\underset{\boldsymbol{f}}{\operatorname{argmin}}\big\{c\|\mathbf{M}_{l}(\boldsymbol f-\boldsymbol y)\|_{\pi}^2+c\|\mathbf{M}_{u}\boldsymbol f\|_{\pi}^2+\varrho_2\langle \boldsymbol f,\mathbf{L}_{\operatorname{RW}} \boldsymbol f\rangle_{\pi}\big\},  \quad c,\varrho_2 > 0.
\end{equation}
with $\mathbf{L}_{\operatorname{RW}}$ the random walk Laplacian on directed graphs~\eqref{eq:rw_dir_laplacian}. The term $\langle \boldsymbol f,\mathbf{L}_{\operatorname{RW}} \boldsymbol f\rangle_{\pi}$ is then the Dirichlet energy~\eqref{dir_nrj}. 

\item In the framework of "Discrete Signal Processing on Graphs"~\cite{sandryhaila2013discrete,sandryhaila2014discrete}, the semi-supervised learning problem for directed graphs is formulated as follows
\begin{equation}
\label{semi_sup_moura}
\underset{\boldsymbol{f}}{\operatorname{argmin}} \big\{c\|\mathbf{M}_{l}(\boldsymbol{f}-\boldsymbol{y})\|^2+\varrho_2\|\boldsymbol f-\mathbf{W}^{norm}\boldsymbol f\|_2^2\big\}, \quad c,\varrho_2 > 0,
\end{equation}
 with $\mathbf{W}^{norm}$, a normalized version of the the adjacency  matrix $\mathbf{W}$ so that its spectral norm is equal to one. 
 \end{case}

\paragraph{Semi-supervised learning on $\mathcal{G}_{\operatorname{sym}}$, the symmetrized graph:} 
We consider the same methods applied now on $\mathcal{G}_{\operatorname{sym}}$. 
\begin{case}
\item  Given the graph signal $\boldsymbol y\in\ell^2(\mathcal{V})$, the optimization problem is
\begin{equation}
\label{semi_sup_dir_zhu_sym}
\underset{\boldsymbol{f}}{\operatorname{argmin}}\big\{c\|\mathbf{M}_{l}(\boldsymbol f-\boldsymbol y)\|^2+c\|\mathbf{M}_{u}\boldsymbol f\|^2+\varrho_2\langle \boldsymbol f,\mathcal{L}_{\operatorname{sym}} \boldsymbol f\rangle\big\},  \quad c,\varrho_2 > 0.
\end{equation}
with $\mathcal{L}_{\operatorname{sym}}$ the normalized Laplacian defined as
\begin{equation*}
\mathcal{L}_{\operatorname{sym}}=\mathbf{I}-\boldsymbol\Pi_{\operatorname{sym}}^{1/2}\mathbf{P}_{sym}\boldsymbol\Pi_{\operatorname{sym}}^{-1/2}, \quad \boldsymbol\Pi_{\operatorname{sym}}=\operatorname{diag}\{\pi_{sym}(v_1),\ldots,\pi_{sym}(v_N)\}.
\end{equation*} 
\item  The optimization problem is  
\begin{equation}
\label{semi_sup_dir_zhu_pi_sym}
\underset{\tilde{\boldsymbol f}}{\operatorname{argmin}}\big\{c\|\mathbf{M}_{l}(\tilde{\boldsymbol f}-\tilde{\boldsymbol y})\|^2+c\|\mathbf{M}_{u}\tilde{\boldsymbol f}\|^2+\varrho_2\langle \tilde{\boldsymbol f},\mathcal{L}_{\operatorname{sym}} \tilde{\boldsymbol f}\rangle\big\},  \quad c,\varrho_2 > 0.
\end{equation}
This can be rewritten as
\begin{equation}
\label{semi_sup_dir_re_sym}
\underset{\boldsymbol{f}}{\operatorname{argmin}}\big\{c\|\mathbf{M}_{l}(\boldsymbol f-\boldsymbol y)\|_{\pi}^2+c\|\mathbf{M}_{u}\boldsymbol f\|_{\pi}^2+\varrho_2\langle \boldsymbol f,\mathbf{L}_{\operatorname{RW,\operatorname{sym}}} \boldsymbol f\rangle_{\pi}\big\},  \quad c,\varrho_2 > 0.
\end{equation}
with $\mathbf{L}_{\operatorname{RW,\operatorname{sym}}}$ the random walk Laplacian defined as
\begin{equation*}
\mathbf{L}_{\operatorname{RW,\operatorname{sym}}}=\mathbf{I}-\mathbf{P}_{\operatorname{sym}}.
\end{equation*}
 \end{case}

\subsubsection{Solutions in closed form}
 The problem \eqref{semi_sup_graph_gnl} is quadratic and convex and therefore admits a closed form solution. 
\paragraph{Semi-supervised learning on $\mathcal{G}$}
\begin{case}
\item The solution of the optimization problem~\eqref{semi_sup_dir_zhu} is
\begin{equation}
\boldsymbol f^{*}=(\mathbf{I}+\gamma\mathcal{L})^{-1}\mathbf{M}_{l}\boldsymbol y,\quad \gamma=\varrho_2/c, \quad \boldsymbol f^{*}\in\ell^2(\mathcal{V}).
\end{equation}
\item The solution of the optimization problem~\eqref{semi_sup_dir_zhu_pi} is
\begin{equation}
\boldsymbol f^{*}=(\mathbf{I}+\gamma\mathbf{L}_{\operatorname{RW}})^{-1}\mathbf{M}_{l}\boldsymbol y,\quad \gamma=\varrho_2/c, \quad \boldsymbol f^{*}\in\ell^2(\mathcal{V},\pi).
\end{equation}
\item The solution of the optimization problem~\eqref{semi_sup_moura} is
\begin{equation*}
\boldsymbol f^{*}=\big(\mathbf{M}_{l}+\gamma \mathbf{R}_{M}^{-1}\big)\boldsymbol y,\quad \gamma=\varrho_2/c, \quad \boldsymbol f^{*}\in\ell^2(\mathcal{V}).
\end{equation*}
with $\mathbf{R}_{M}=(\mathbf{I}-\mathbf{W}^{norm})^{\top}(\mathbf{I}-\mathbf{W}^{norm})$.
\end{case}

\paragraph{Semi-supervised learning on $\mathcal{G}_{\operatorname{sym}}$}
\begin{case}

\item The solution of the optimization problem~\eqref{semi_sup_dir_zhu_sym} is
\begin{equation}
\boldsymbol f^{*}=(\mathbf{I}+\gamma\mathcal{L}_{\operatorname{sym}})^{-1}\mathbf{M}_{l}\boldsymbol y,\quad \gamma=\varrho_2/c, \quad \boldsymbol f^{*}\in\ell^2(\mathcal{V}).
\end{equation}

\item The solution of the optimization problem~\eqref{semi_sup_dir_zhu_pi_sym} is
\begin{equation}
\boldsymbol f^{*}=(\mathbf{I}+\gamma\mathbf{L}_{\operatorname{RW,sym}})^{-1}\mathbf{M}_{l}\boldsymbol y,\quad \gamma=\varrho_2/c, \quad \boldsymbol f^{*}\in\ell^2(\mathcal{V},\pi_{\operatorname{sym}}).
\end{equation}
\end{case}

Remember that in each case, we take the approximated solution for 2 classes which keeps only the sign of the solution as the label of the inferred class: $\boldsymbol y^* = \mathrm{sgn}(\boldsymbol f^{*})$. 
We summarize the different methods in Table~\ref{ssl_methods}. 

\begin{table}[H]
\begin{center}
\begin{tabular}{ c c c}
\toprule
         Method & $\mathcal{G}$ & $\mathcal{G}_{\operatorname{sym}}$ \\ 
       \midrule
        Method 1 & $\mathcal{L}$ & $\mathcal{L}_{\operatorname{sym}}$\\ 
        
       Method 2 & $ \mathbf{L}_{\operatorname{RW}}$ & $\mathbf{L}_{\operatorname{RW,sym}}$\\
       
       Method 3 & $\mathbf{R}_{M}$ ~\cite{sandryhaila2013discrete} &  \\
       \bottomrule
\end{tabular} 
\end{center}
 \caption{Table of the different operators associated with the semi-supervised learning methods according to $\mathcal{G}$ and $\mathcal{G}_{\operatorname{sym}}$.}
 \label{ssl_methods}
\end{table}

\subsubsection{Experiments}
\label{expe_polblog}
Let us consider the dataset of the political blogs of the 2004 US presidential campaign~\cite{adamic2005political}. The dataset consists of 1224  blogs, each associated with a political orientation, either republican or democrat. The hyperlinks between blogs provide the dataset with a graph structure $\mathcal{G}=(\mathcal{V},\mathcal{E})$, where each vertex $v\in\mathcal{V}$ is associated to a blog and an edge between two vertices $\{v_i,v_j\}$ indicates the presence of hyperlinks from blog $i$ to blog $j$. The political orientations of the blogs are modeled by a graph signal $\boldsymbol f_0=\{f_0(v_1),\cdots,f_0(v_N)\}$ where $f_0(v_i)=+1$ if blog $i$ has a Democrat orientation and $f_0(v_i)=-1$ if it is Republican. The directed graph $\mathcal{G}$ is not strongly connected, which is a problem with our framework that requires strongly connected graphs. As a result, we consider the largest strongly connected subgraph of $\mathcal{G}$, denoted by $\mathcal{G}'=(\mathcal{V}',\mathcal{E}')$ which is made up of $|\mathcal{V}'|=N'=793$ vertices (hence roughly 65\% of the vertex set $\mathcal{V}$) and its associated graph signal $\boldsymbol f_0'$. The respective performances of the approaches \cref{semi_sup_dir_zhu,semi_sup_dir_re,semi_sup_moura} and their symmetrized counterparts~\cref{semi_sup_dir_zhu_sym,semi_sup_dir_re_sym} are compared in~\cref{fig:ssl_lscc}.

% The respective performances of the approaches \eqref{ssl_sevi},~\eqref{ssl_moura},~\eqref{semi_sup_sym} are compared on the directed graph $\mathcal{G}'$. 

\subsubsection*{Numerical simulations}
\begin{figure}
 \includegraphics[width=1\columnwidth]
 {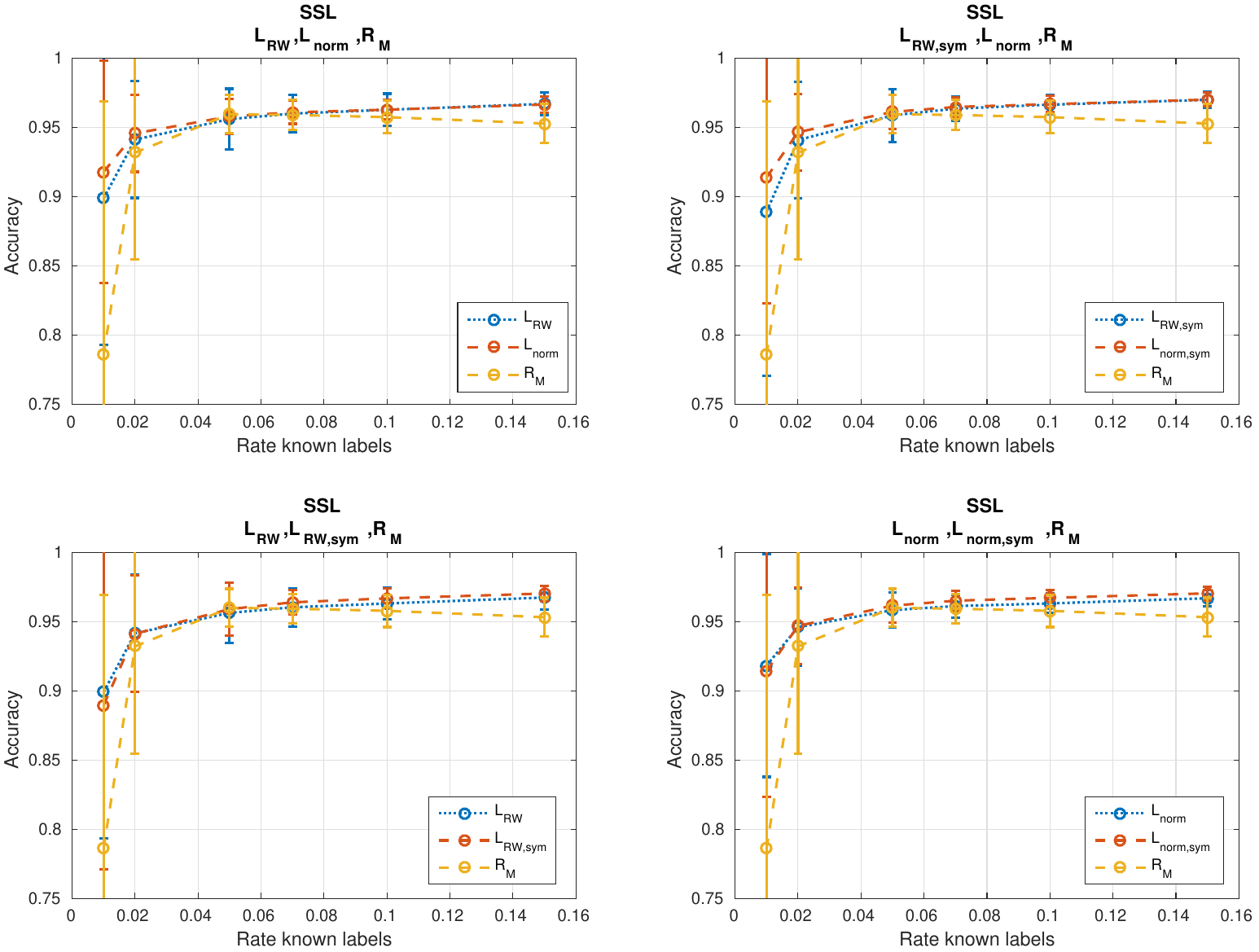}
 \caption{Semi-supervised learning on $\mathcal{G}'$ of the political blogs of the 2004 US presidential campaign. The notations $\operatorname{L_{norm}}$ and $\operatorname{L_{norm,sym}}$ correspond respectively to $\mathcal{L}$ and $\mathcal{L}_{\operatorname{sym}}$                                                                       .}
\label{fig:ssl_lscc}
 \end{figure}

Figure~\ref{fig:ssl_lscc} shows the performance of the semi-supervised approaches~\cref{semi_sup_dir_zhu,semi_sup_dir_re,semi_sup_moura,semi_sup_dir_zhu_sym,semi_sup_dir_re_sym}, in terms of the proportion of correct matching between $\boldsymbol y^*$ and $\boldsymbol f_0$ (or $\boldsymbol f'_0$). For each method, several values of $\gamma\in[0,10]$ were tested and we kept the one giving the best performance over 500 realizations. The lines displayed in~\cref{fig:ssl_lscc} are averaged estimation accuracy over 500 realizations and the error bars show the corresponding standard deviations.   

Firstly, we remark that for all proportions of known labels, the semi supervised approaches based on $\mathcal{L}, \mathbf{L}_{\operatorname{RW}}$ and its symmetrized counterparts $\mathcal{L}_{\operatorname{sym}}, \mathbf{L}_{\operatorname{RW,sym}}$ perform better than the semi-supervised approaches based on $\mathbf{R}_{M}$. On the top left of~\cref{fig:ssl_lscc}, we compare the semi-supervised approaches involving the operators $\mathcal{L}, \mathbf{L}_{\operatorname{RW}}$ and $\mathbf{R}_M$. The semi-supervised approach associated with $\mathcal{L}$ performs the best compared to the approaches involving $\mathbf{L}_{\operatorname{RW}},\mathbf{R}_M$ for low proportions of known labels. Furthermore, $\mathcal{L}$ performs slightly better than the approach involving $\mathbf{L}_{\operatorname{RW}}$. On the top right of~\cref{fig:ssl_lscc}, we compare the semi-supervised approaches involving the operators $\mathcal{L}_{\operatorname{sym}}, \mathbf{L}_{\operatorname{RW,sym}}$ and $\mathbf{R}_M$. Similarly the top left figure, the semi-supervised approach associated with $\mathcal{L}_{\operatorname{sym}}$ performs slightly better than the approach involving $\mathbf{L}_{\operatorname{RW}}$. On the bottom left of~\cref{fig:ssl_lscc}, we compare the semi-supervised approaches involving the operators $\mathbf{L}_{\operatorname{RW}},\mathbf{L}_{\operatorname{RW,sym}}$ and $\mathbf{R}_{M}$. The approach involving  $\mathbf{L}_{\operatorname{RW}}$ and $\mathbf{L}_{\operatorname{RW,sym}}$ perform quite similarly for any proportion of known labels. The same can be observed in the bottom right of~\cref{fig:ssl_lscc} comparing the semi-supervised approaches involving $\mathbf{L}_{\operatorname{RW}},\mathbf{L}_{\operatorname{RW,sym}}$ and $\mathbf{R}_{M}$. This performance comparison leads us to the following conclusions:
\begin{itemize}
\item The semi supervised learning approaches involving Dirichlet energies based upon $\mathcal{L}$ and $\mathbf{L}_{\operatorname{RW}}$ (and their symmetrized counterparts) yield better performance compared to the approach proposed by Sandryhaila and Moura~\cite{sandryhaila2013discrete} based upon $\mathbf{R}_{M}$.
Consequently, semi-supervised learning approaches on directed graphs with Dirichlet energies based on the random walk operator seem more appropriate. 
\item The Hilbert space where we assume the graph signal lives seems to have an influence on the performance. Here, we obtain better performance when assuming that the graph signal $\boldsymbol y$ belongs to $\ell^2(\mathcal{V})$ rather than $\ell^2(\mathcal{V},\pi)$, or equivalently when using $\mathcal{L}$ in the regularization term rather than $\mathcal{L}_{\operatorname{RW}}$.
% considering the graph signal $\boldsymbol y$ belongs to $\ell^2(\mathcal{V})$, or equivalently considering the operator $\mathcal{L}$ in the regularization term yields better performance than considering that the graph signal $\boldsymbol y$ belongs to $\ell^2(\mathcal{V},\pi)$ or equivalently considering that the operator $\mathcal{L}_{\operatorname{RW}}$ is in the regularization term.  
% \item Semi-supervised approaches on directed graphs are more advantageous than the approaches using symmetrized graphs. In the example, for a low rate of known labels, the performance based upon a Dirichlet energy involving $\mathcal{L}$ or $\mathbf{L}_{\operatorname{RW}}$ were slightly better than their symmetric counterparts $\mathcal{L}_{\operatorname{sym}}$ and $\mathbf{L}_{\operatorname{RW,sym}}$. 
\end{itemize}

\subsection{Signal modeling on directed graphs via filtering}
\label{sig_model_dg}

In this section, we consider a way to model the relationships between the values of a graph signal using a graph filter. The model is expressed as a graph filter that takes some values from the graph signal and reconstructs the other values.  A possible application of such a model could be the lossy compression of the signal, where knowing only the graph, the coefficients of the graph filter and a few signal values enable the reconstruction of the whole signal. The major difference with the previous application is that we assume the whole graph signal known to learn the filter.

\subsubsection{Problem formulation}

Let $\mathcal{G}=(\mathcal{V},\mathcal{E})$ be a directed graph with cardinality $|\mathcal{V}|=N$ and $\mu:\mathcal{V}\rightarrow \mathbb{R}_{+}$ a positive measure. The directed graph $\mathcal{G}$ is characterized by its adjacency matrix $\mathbf{W}\in\mathbb{R}_+^{N\times N}$. Let $f_0\in\ell^2(\mathcal{V},\mu)$ be a known graph function and $\boldsymbol f_0=[f_0(v_1),\ldots,f_0(v_N)]^{\top}$, the corresponding graph signal. Let $\boldsymbol y$ be a random sampling of $f_0$ according to some known sampling strategy, where for a given sampling realization $y_j = f_0(v_j)$ or $y_j = 0$ depending on whether vertex $j$ is selected in that sampling realization. The goal of the problem is to find a graph filter $\mathbf{H}$ that optimally reconstructs $\boldsymbol f_0$ from the samples in $\boldsymbol y$. More specifically, we consider graph filters expressed as finite polynomials of a reference operator $\mathbf{R}$ with order $K$. 
\begin{equation}
% \label{poly_sum}
 \mathbf{H}=\sum_{k=0}^K\theta_k\mathbf{R}^{k},\quad\theta_k\in\mathbb{R},\quad  k=0,\cdots,K. 
 \end{equation}
 The reconstruction of $\boldsymbol f_0$ from $\boldsymbol y$ is then
\begin{equation}
  \boldsymbol{\hat f_0} = \mathbf{H}\boldsymbol y = \sum_{k=0}^K\theta_k\mathbf{R}^{k}\boldsymbol y.
\end{equation}

In order to find an optimal filter $\widehat{\mathbf{H}}=\sum_{k=0}^K{\hat \theta}_{k}\mathbf{R}^{k}$, we propose to minimize the expected quadratic reconstruction error:
\begin{equation}
\label{signal_rec_prob_form}
{\boldsymbol {\hat\theta}} = \underset{\boldsymbol\theta=\{\theta_k\}_{k=0}^K\in\mathbb{R}^{K+1}}{\operatorname{argmin}}\mathbb{E}\Big\|\boldsymbol f_0-\sum_{k=0}^K\theta_k\mathbf{R}^k\boldsymbol y\Big\|_{\mu}^2 ,
\end{equation}

Finally, we only consider sampling strategies where samples in $\boldsymbol y$ are selected independently according to a given distribution $\boldsymbol \delta$, that is $ y_j = f_0(v_j)$ with probability $\delta_j$ and $y_j = 0$ otherwise. The random variables $y_j, j=1,\ldots,N$ can be expressed as $y_j=\varepsilon_j f_0(v_j)$,  where the $\varepsilon_j$ are independent and Bernoulli distributed with parameters $\delta_j$: $\varepsilon_j\sim \operatorname{Ber}(\delta_j)$. The random variable $\bar{\varepsilon}$ defined as
\begin{equation}
\label{mean_Y}
p=\frac{1}{N}\sum_{i=1}^N \varepsilon_i,
\end{equation}
is the proportion of known values of $\boldsymbol{f}_0$ collected in $\boldsymbol y$.

\subsubsection{Solution in closed form}
The solution of the problem~\eqref{signal_rec_prob_form} is the following 

\begin{equation}
\boldsymbol{\widehat{\theta}}=\mathbf{Z}^{-1}\mathbf{M}\mathbf{Q}^{\top}\boldsymbol f_0,
\label{signal_rec_prob_form_sol} 
\end{equation}
where $\mathbf{M}=\operatorname{diag}\{\mu(v_1),\cdots,\mu(v_N)\}$ is the diagonal matrix of the associated measure $\mu$, $\mathbf{Z}$ the matrix where each entry $Z_{k\ell}\in\mathbf{Z}$ is expressed as
\begin{equation*}
Z_{k\ell}=\operatorname{Tr}\big([\mathbf{R}^{k}]^{\top}\mathbf{M}\mathbf{R}^{\ell}\mathbb{E}\{\boldsymbol{y y^{\top}} \}\big),\quad \forall \{k,\ell\} \in \ldbrack 0,K \rdbrack^{2},
\end{equation*}
and $\mathbf{Q}=[\boldsymbol q_{0},\cdots,\boldsymbol q_{K}]\in\mathbb{R}^{N\times (K+1)}$ where each vector $\boldsymbol q_{j}$ is
\begin{equation*}
q_{j}=\mathbf{R}^{j}\mathbb{E}\{\boldsymbol y \},\quad\forall j\in \ldbrack 0,K \rdbrack.
\end{equation*}

Finally, according to the definition of $\boldsymbol y$, we have $\mathbb{E}\{ y_j \} = \delta_jf_0(v_j)$, $\mathbb{E} \{y_j^2\} = \delta_jf_0^2(v_j)$ and $\mathbb{E} \{y_iy_j\} = \delta_i\delta_jf_0(v_i)f_0(v_j)$ for $i\neq j$, which would allow us to replace the above terms $\mathbb{E}\{\boldsymbol y\}$ and $\mathbb{E}\{\boldsymbol y\boldsymbol y^\top\}$ with their values. For the experiments below, we will assume that the sampling strategy is unknown and estimate $\mathbb{E}(\boldsymbol y)$, $\mathbb{E}(\boldsymbol{y y^{\top}})$ empirically.

% Consequently, the resulting graph filter $\widehat{\mathbf{H}}$ is 
 
% \begin{equation*}
% \widehat{\mathbf{H}}=\sum_{k=0}^K\hat{\theta}_k\mathbf{R}^{k},\quad\hat{\theta}_k\in\mathbb{R},\quad  k=0,\cdots,K. 
% \end{equation*}
% \item 
%  \begin{equation*}
% \widehat{\mathbf{H}}=\sum_{j=1}^m\hat{\gamma}_j\mathbf{E}_j,
% \quad \hat{\gamma}_j \in \mathbb{C} ,\quad \mathbf{E}_j\in\mathbb{C}^{N\times N},\quad \forall j=1,\dots,m, 
% \end{equation*}
% \end{form}

\subsubsection{An exploration of modeling accuracy with different graph operators}
The reconstruction quality using the previous approach depends on several factors: the choice of reference operator $\mathbf{R}$, the order of the filter $K$ and the random sampling strategy defined by the Bernoulli parameters $\delta_k$. Here, we evaluate primarily the influence of $\mathbf{R}$ and compare two possible sampling strategies. As an example, we also consider the dataset of the political blogs of the 2004 US presidential campaign~\cite{adamic2005political}, described in section~\ref{expe_polblog}. 

As before, we will use the largest strongly connected directed subgraph $\mathcal{G}'$, represented by its adjacency matrix $\mathbf{W}$. On $\mathcal{G}'$, we consider the following operators
\begin{itemize}
\item $\mathbf{W}_{norm}$ the normalized version of the adjacency matrix $\mathbf{W}$ whose spectral norm is equal to one. 
\item The random walk operator $\mathbf{P}$ associated with an ergodic random walk $\mathcal{X}$ with unique stationary distribution $\pi$. 
\item $\bar{\mathbf{P}}$ the additive reversibilization of $\mathbf{P}$. 
\item $\mathbf{T}=\boldsymbol\Pi^{1/2}\mathbf{P}\boldsymbol\Pi^{-1/2}$ the operator similar to $\mathbf{P}$.
\item $\bar{\mathbf{T}}=\boldsymbol\Pi^{1/2}\bar{\mathbf{P}}\boldsymbol\Pi^{-1/2}$ the operator similar to $\bar{\mathbf{P}}$.
\item $\bar{\mathbf{P}}_{\alpha} \in\bar{\mathcal{P}}$ with $\bar{\mathcal{P}}=\big\{\bar{\mathbf{P}}_{\alpha}: \bar{\mathbf{P}}_{\alpha}=(1-\alpha)\mathbf{P}+\alpha\mathbf{P}^{*}\big| \alpha\in[0,1]\big\}$ 
\item  $\bar{\mathbf{T}}_{\alpha} \in\bar{\mathcal{T}}$ with $\bar{\mathcal{T}}=\big\{\bar{\mathbf{T}}_{\alpha}: \bar{\mathbf{T}}_{\alpha}=(1-\alpha)\boldsymbol\Pi^{1/2}\mathbf{P}\boldsymbol\Pi^{-1/2}+\alpha\boldsymbol\Pi^{1/2}\mathbf{P}^{*}\boldsymbol\Pi^{-1/2}\big| \alpha\in[0,1]\big\}$
\end{itemize}

 We also consider $\mathcal{G}_{\operatorname{sym}}'$, the symmetrized version of $\mathcal{G}'$. The undirected graph $\mathcal{G}_{\operatorname{sym}}'$ is represented by its adjacency matrix $\mathbf{W}_{\operatorname{sym}}=(\mathbf{W}+\mathbf{W}^{\top})/2$. On $\mathcal{G}_{\operatorname{sym}}'$, we define the following operators
 \begin{itemize}
 \item $\mathbf{W}_{\operatorname{sym}}^{norm}$ the normalized version of the adjacency matrix $\mathbf{W}_{\operatorname{sym}}$ whose spectral  norm is equal to one. 
 \item The random walk operator $\mathbf{P}_{\operatorname{sym}}$ corresponding to the ergodic random walk $\mathcal{X}_{\operatorname{sym}}$ with stationary distribution $\pi_{\operatorname{sym}}$.
 \item $\mathbf{T}_{\operatorname{sym}}=\boldsymbol\Pi_{\operatorname{sym}}^{1/2}\mathbf{P}_{\operatorname{sym}}\boldsymbol\Pi_{\operatorname{sym}}^{-1/2}$ the operator similar to $\mathbf{P}_{\operatorname{sym}}$.
 \end{itemize}
 
 Finally, we consider $\mathcal{G}$, the full graph that is not strongly connected. Consequently, we can not build directly a random walk operator on $\mathcal{G}$ unless we transform $\mathcal{G}$ into a strongly connected graph, as required by our framework. 
% Indeed, some vertices in $\mathcal{G}$ have an out-degree equal to zero and we cannot create a transition matrix directly from the adjacency matrix. 
We thus consider two classic approaches that make the graph strongly connected and the associated random walk ergodic:
\begin{casenew}
\item Rank-one perturbation: from the original adjacency matrix $\mathbf{W}$, we construct a new adjacency matrix $\mathbf{W}_{\epsilon}$ as follows
\begin{equation*}
 \mathbf{W}_{\epsilon}=\mathbf{W}+\epsilon \mathbf{J},
\end{equation*}
where $\mathbf{J}=\mathbf{11}^{\top}/N$ is a rank-one matrix and $\epsilon$ is small. The weak perturbation of the adjacency matrix $\mathbf{W}$ by the matrix $\mathbf{J}$ ensures that the random walk on $\mathcal{G}$ is ergodic with stationary measure $\pi_{\epsilon}$ and its associated transition matrix $\mathbf{P}_{\epsilon}$ is well-defined. For our experiments, we choose $\epsilon=10^{-4}$. 
\item Construction of the Google matrix of $\mathcal{G}$~\cite{langville2011google,RevModPhys.87.1261}. This is achieved in two steps. Firstly, we construct an adjacency matrix $\tilde{\mathbf{W}}$ from $\mathbf{W}$ by adding a weight one from all dangling nodes, that is nodes with no
out-edges towards all the nodes in the graph. From $\tilde{\mathbf{W}}$ we can construct the transition matrix $\mathbf{S}$. Secondly,
we define the Google matrix $\mathbf{P}_{G}$ as
\begin{equation*}
\mathbf{P}_{G}=(1-\gamma)\mathbf{S}+\gamma \mathbf{J}.
\end{equation*}
where $\gamma=0.85$~\cite{langville2011google,RevModPhys.87.1261}.
\end{casenew}
 
 We will compare the reconstruction accuracy using a proportion of correctly reconstructed labels obtained via

\begin{equation*}
\boldsymbol{\hat{f}}=\operatorname{sgn}(\widehat{\mathbf{H}} \boldsymbol y). 
\end{equation*}

\paragraph{Subproblem 1}
We solve the problem~\eqref{signal_rec_prob_form} by learning a polynomial graph filter with $K=10$ on $\mathcal{G}'$ and $\mathcal{G}_{\operatorname{sym}}'$ with each of the reference operators listed above. In this subproblem, the random variables $y_j,j=1,\ldots,N'$ are distributed according to one of the two following cases 
\begin{itemize}
\item uniform sampling: $y_j=\varepsilon_jf_{0}'(v_j)$ where $\varepsilon_j\sim \operatorname{Ber}(p)$ with $p$ the proportion of known labels.
\item $\pi$-weighted sampling: $y_j=\varepsilon_j f_{0}'(v_j)$ where $\varepsilon_j\sim \operatorname{Ber}(\alpha \pi_j)$, such that $\sum_j\mathbb{E}\{\varepsilon_j\}=pN'$ with $p$ the proportion of known labels.
\end{itemize}

The proportions of correctly reconstructed labels are measured for various $p$ values using 500 realizations of $\boldsymbol{y}$ to estimate its mean and covariance used in the solution~~\eqref{signal_rec_prob_form_sol}. 

We summarize the different cases in table~\ref{subproblem_one}.

\begin{table}[H]
\begin{center}
 \begin{adjustbox}{max width=\textwidth}
\begin{tabular}{c c c}
\toprule
\textbf{Case} & Distribution $\boldsymbol y$ & Reference operators\\[0.5cm]
\midrule

Case 1 & $y_j=\varepsilon_jf_{0}'(v_j)$,  $\varepsilon_j\sim \operatorname{Ber}(p)$ & $\mathbf{W}^{norm}, \mathbf{P},\bar{\mathbf{P}},\mathbf{T},\bar{\mathbf{T}}$,
$\mathbf{W}_{sym}^{norm}, \mathbf{P}_{sym},\mathbf{T}_{sym}$\\[0.5cm]

Case 2 & $y_j=\varepsilon_j f_{0}'(v_j)$,  $\varepsilon_j\sim \operatorname{Ber}(\alpha \pi_j)$, $\sum_j\mathbb{E}\{\varepsilon_j\}=pN'$ & $\mathbf{W}^{norm}, \mathbf{P},\bar{\mathbf{P}},\mathbf{T},\bar{\mathbf{T}},\mathbf{W}_{sym}^{norm}, \mathbf{P}_{sym},\mathbf{T}_{sym}$\\[0.5cm]
       \bottomrule
\end{tabular} 
\end{adjustbox}
\end{center}
 \caption{Summary table of the different cases of the subproblem 1.}
 \label{subproblem_one}
\end{table}

 \begin{figure}[H]
 \includegraphics[width=1.0\columnwidth]{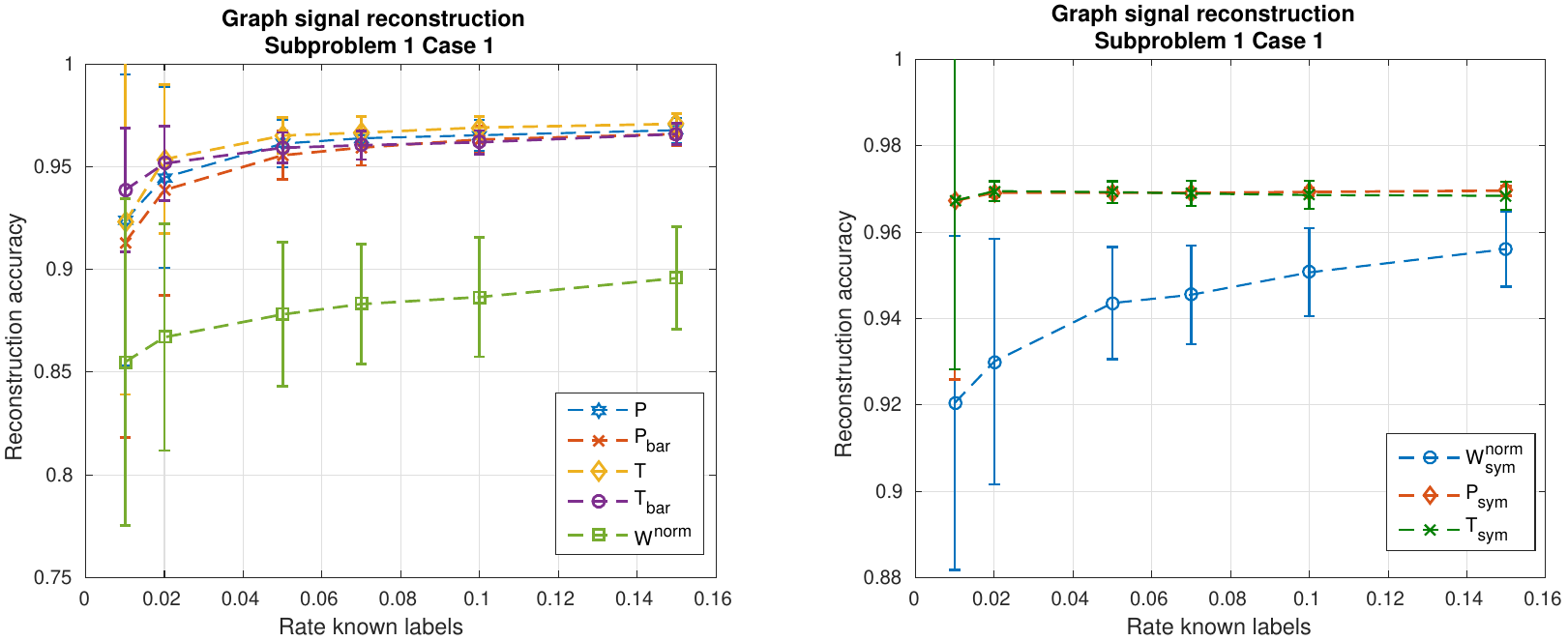}
 \caption{(Subproblem 1, Case 1) Left: Reconstruction of the graph signal $\boldsymbol f_0'$ on $\mathcal{G}'$. Right: Reconstruction of the graph signal $\boldsymbol f_0'$ on $\mathcal{G}_{sym}'$.The notation $\operatorname{P_{bar}}$ (resp. $\operatorname{T_{bar}}$) correspond to $\bar{\mathbf{P}}$ (resp.$\bar{\mathbf{T}}$).}
\label{fig:lscc_rec_unif}
 \end{figure}
 
\paragraph{Numerical simulations: Case 1}
We evaluate the reconstruction performance of the graph signal $\boldsymbol f_0'$ in figure \ref{fig:lscc_rec_unif}. 
For all proportions $p$ of known labels, the average rate of correctly reconstructed labels using a graph filter baseed on $\mathbf{P}$ or $\bar{\mathbf{P}}$ is significantly better than when using a graph filter based on $\mathbf{W}_{norm}$. This holds on the subgraph $\mathcal{G}'$ ands its symmetrized version $\mathcal{G}_{\operatorname{sym}}'$. Furthermore, the reconstruction performance using a filter based on $\mathbf{P}$ is slightly better than using a filter based on $\bar{\mathbf{P}}$. The reconstruction performance using a filter based on $\mathbf{T}$ is the best overall. For small proportions $p$ of known labels, the average rate of correctly reconstructed labels from filter based on $\bar{\mathbf{T}}$ is slightly better. We also notice that the reconstruction performance using a filter based on $\mathbf{T}_{\operatorname{sym}}$ is identical to using a filter based on $\mathbf{P}_{\operatorname{sym}}$ for all proportions $p$ of known labels.

\begin{figure}[H]
 \includegraphics[width=1.0\columnwidth]{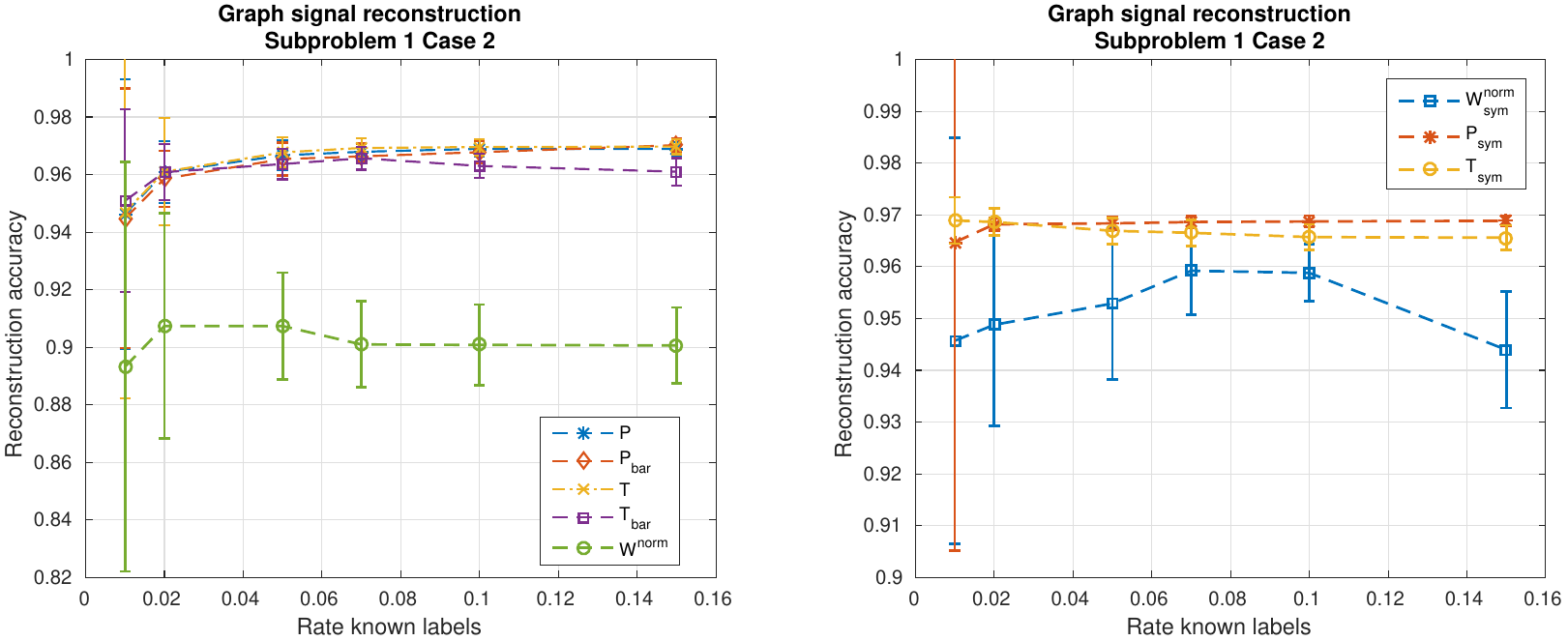}
 \caption{ (Subproblem 1, Case 2) Left: Reconstruction of the graph signal $\boldsymbol f_0'$ on the subgraph $\mathcal{G}'$. Right: Reconstruction of the graph signal $\boldsymbol f_0'$ on $\mathcal{G}_{sym}'$. The notation $\operatorname{P_{bar}}$ (resp. $\operatorname{T_{bar}}$) correspond to $\bar{\mathbf{P}}$ (resp.$\bar{\mathbf{T}}$).}
 \label{fig:rec_lscg_pii_f}
 \end{figure}
 
 \paragraph{Numerical simulations: Case 2}
The reconstruction performance is shown in figure \ref{fig:rec_lscg_pii_f}. Filters based on $\mathbf{W}_{norm}$ perform significantly worse than any other choice both on the subgraph $\mathcal{G}'$ or its symmetrized version $\mathcal{G}_{\operatorname{sym}}'$. Also, the reconstruction accuracy when using $\mathbf{W}_{norm}$ has a much larger variability and the average performance even decreases for larger $p$. Among other methods, the differences are not significant except for filters based on $\bar{\mathbf{T}}$ which perform slightly worse than others for large $p$.  
\\

\paragraph{Case 1 vs. Case 2}
The reconstruction performance using a filter based on $\mathbf{P}$ or $\bar{\mathbf{P}}$ is slightly better when the samples in $\boldsymbol y$ are selected according to a distribution proportional to the stationary distribution $\pi$ (case 2) than when the distribution is uniform (case 1). It is also the case when using a filter based on $\mathbf{T}$ or $\bar{\mathbf{T}}$. This is expected as vertices with larger $\pi_k$ values correspond to better-connected blogs, which are likely to have an influence on a larger number of other blogs. Although the reconstruction performance using a filter based on $\mathbf{W}_{norm}$ is better in case 2 than in case 1, the large variability and the poorer performance at large $p$ in case 2 suggest that these filters provide generally poorer models of the signal. The better reconstruction performance using a filter based on $\mathbf{T}$ or $\bar{\mathbf{T}}$ with respect to using a filter based on $\mathbf{P}$ or $\bar{\mathbf{P}}$ suggests that for this application it seems more suitable for the graph signal to live in $\ell^2(\mathcal{V})$ than $\ell^2(\mathcal{V},\pi)$. 
\\

\paragraph{Subproblem 2}
Here we consider the same problem~\eqref{signal_rec_prob_form} as in the previous section but now compare the performance using reference operators  $\bar{\mathbf{P}}_{\alpha}\in \bar{\mathcal{P}}$ and their equivalents $\bar{\mathbf{T}}_{\alpha}=\boldsymbol\Pi^{1/2}\bar{\mathbf{P}}_{\alpha}\boldsymbol\Pi^{-1/2}$.  The random variables $y_j$, $j=1,\ldots,N'$ are distributed according to one of the two following cases:
\begin{itemize}
\item uniform sampling: $y_j=\varepsilon_jf_{0}'(v_j)$ where $\varepsilon_j\sim \operatorname{Ber}(p)$ with $p$ the proportion of known labels.
 \item $\pi$-weighted sampling:  $y_j=\varepsilon_j f_{0}'(v_j)$ where $\varepsilon_j\sim \operatorname{Ber}(\alpha \pi_j)$, such that $\sum_j\mathbb{E}\{\varepsilon_j\}=pN'$ with $p$ the proportion of known labels.
\end{itemize}

We summarize the different cases in~\cref{subproblem_two}. 
\begin{table}[H]
\begin{center} \begin{adjustbox}{max width=\textwidth}
\begin{tabular}{c c c}
\toprule
\textbf{Case} & Distribution $\boldsymbol y$ & Reference operators\\[0.5 cm]
\midrule
 
Case 1 & $y_j=\varepsilon_jf_{0}'(v_j)$,  $\varepsilon_j\sim \operatorname{Ber}(p)$ & $\bar{\mathbf{P}}_{\alpha}\in \bar{\mathcal{P}},  \bar{\mathbf{T}}_{\alpha}\in\bar{\mathcal{T}}$ \\[0.5 cm]

Case 2 & $y_j=\varepsilon_j f_{0}'(v_j)$,  $\varepsilon_j\sim \operatorname{Ber}(\alpha \pi_j)$, $\sum_j\mathbb{E}\{\varepsilon_j\}=pN'$ & $\bar{\mathbf{P}}_{\alpha}\in \bar{\mathcal{P}}, \bar{\mathbf{T}}_{\alpha}\in\bar{\mathcal{T}}$\\[0.5 cm]
       \bottomrule
\end{tabular} 
\end{adjustbox}
\end{center}
 \caption{Summary table of the different cases of the subproblem 2.}
 \label{subproblem_two}
\end{table}

\begin{figure}
 \includegraphics[width=1.0\columnwidth]{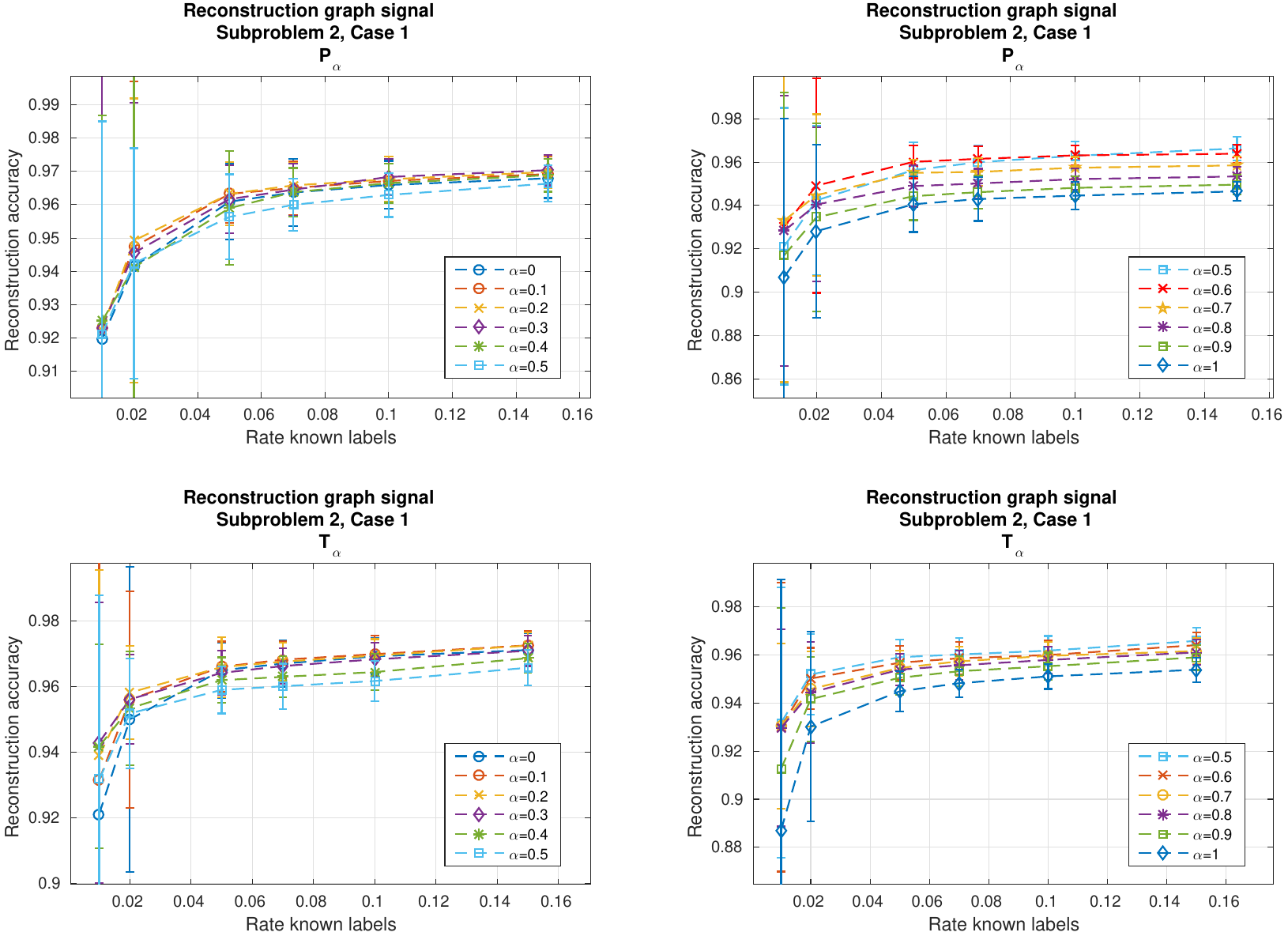}
 \caption{(Subproblem 2, Case 1) Up: Reconstruction of the graph signal $\boldsymbol f_0'$ on $\mathcal{G}'$, $\bar{\mathbf{P}}_{\alpha}\in \bar{\mathcal{P}}$. Down:  Reconstruction of the graph signal $\boldsymbol f_0'$ on $\mathcal{G}'$, $\bar{\mathbf{T}}_{\alpha}\in \bar{\mathcal{T}}$.}
 \label{fig:rec_signal_case_on_subp_two}
 \end{figure}
 
\paragraph{Numerical simulations: Case 1}

We evaluate the reconstruction performance of $\boldsymbol f_0'$ using  filters either based on $\bar{\mathbf{P}}_{\alpha}, \alpha\in[0,1]$ at the top of~\cref{fig:rec_signal_case_on_subp_two} or either based on $\bar{\mathbf{T}}_{\alpha}, \alpha\in[0,1]$ at the bottom of \cref{fig:rec_signal_case_on_subp_two}. We first consider the top of \cref{fig:rec_signal_case_on_subp_two}. For small proportions $p$ of known labels, we notice that the best reconstruction performance using a filter based on $\bar{\mathbf{P}}_{\alpha}$ is obtained for $\alpha\simeq 0.2$ while for higher proportions $p$, the best performance is obtained for $\alpha\simeq 0.3$. We notice poorer performance overall when $\alpha$ increases from $\alpha=0.5$ to $\alpha=1$. At the bottom of~\cref{fig:rec_signal_case_on_subp_two}, the reconstruction performances using a filter based on $\bar{\mathbf{T}}_{\alpha}, \alpha\simeq 0.2$ are the best except for the rate $p=0.01$. We also notice poorer performance overall when $\alpha$ increases from $\alpha=0.5$ to $\alpha=1$. 

  For all proportions $p$ of known labels, filters based on $\bar{\mathbf{T}}_{\alpha}$ appear to perform slightly better than filters based on $\bar{\mathbf{P}}_{\alpha}$. This again suggests that this graph signal $\boldsymbol f_0'$ is better represented as living in $\ell^2(\mathcal{V})$ than $\ell^2(\mathcal{V},\pi)$. Furthermore, the better performance when $\alpha\in(0.5,1)$ suggests that, for this signal, graph filters  $\bar{\mathbf{P}}_{\alpha}$ perform better when the convex combination of $\mathbf{P}$ and $\mathbf{P}^*$ involves more the random walk $\mathbf{P}$, i.e. $\alpha\in(0,0.5)$, than the time-reversed random walk $\mathbf{P}^{*}$.  
\\

 \begin{figure}
 \includegraphics[width=1.0\columnwidth]{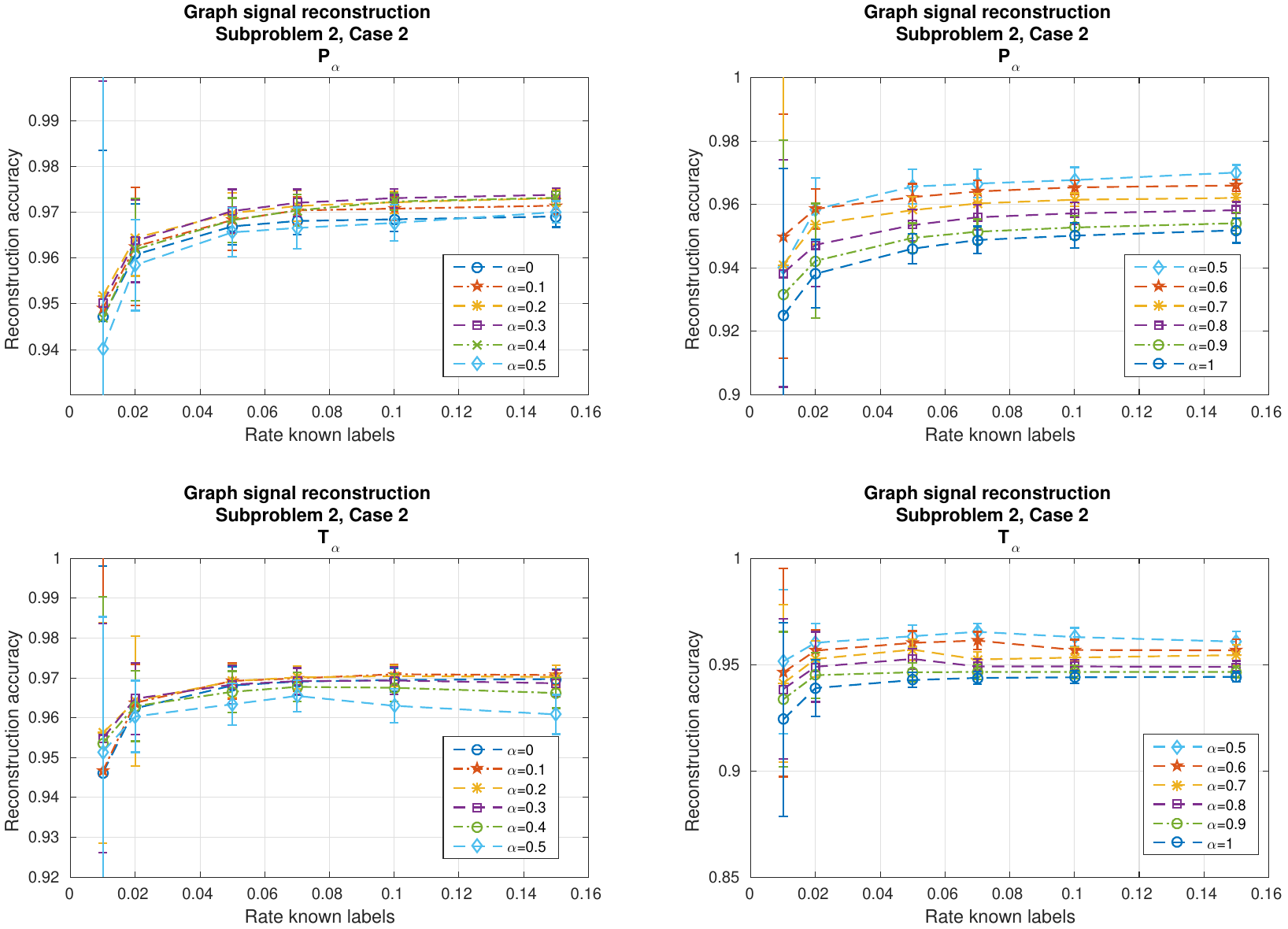}
 \caption{(Subproblem 2, case 2) Up: Reconstruction of the graph signal $\boldsymbol f_0'$ on $\mathcal{G}'$, $\bar{\mathbf{P}}_{\alpha}\in \bar{\mathcal{P}}$. Down:  Reconstruction of the graph signal $\boldsymbol f_0'$ on $\mathcal{G}'$, $\bar{\mathbf{T}}_{\alpha}\in \bar{\mathcal{T}}$.}
 \label{fig:rec_signal_case_two_subp_two}
 \end{figure}

 \paragraph{Numerical simulations: Case 2}
 We evaluate the reconstruction performance of $\boldsymbol f_0'$ using  filters either based on $\bar{\mathbf{P}}_{\alpha}, \alpha\in[0,1]$ at the top of~\cref{fig:rec_signal_case_two_subp_two} or based on $\bar{\mathbf{T}}_{\alpha}, \alpha\in[0,1]$ at the bottom of~\cref{fig:rec_signal_case_two_subp_two}. Let us consider the top of~\cref{fig:rec_signal_case_two_subp_two}. We notice that the reconstruction performances using a filter based on $\bar{\mathbf{P}}_{\alpha},\alpha\simeq 0.3$ are the best except for the rates $p=\{0.01,0.02\}$. We notice poorer performance overall when $\alpha$  increases from $\alpha=0.5$ to $\alpha=1$.  This suggests that, for this signal, graph filters based on $\bar{\mathbf{P}}_{\alpha}$ perform better when the convex combination of $\mathbf{P}$ and $\mathbf{P}^*$ involves more the forward random walk $\mathbf{P}$, i.e. $\alpha\in(0,0.5)$, than the backward random walk $\mathbf{P}^*$, i.e. $\alpha\in(0.5,1)$. At the bottom of~\cref{fig:rec_signal_case_two_subp_two}, the reconstruction performances using a filter based on $\bar{\mathbf{T}}_{\alpha}, \alpha\simeq 0.3$ are the best for all proportions $p$ of known labels. Similarly to the previous cases, we notice poorer performance overall when $\alpha$ increases from $\alpha=0.5$ to $\alpha=1$.

On this example, the reconstruction performance is better using filters based on  $\bar{\mathbf{P}}_{\alpha}$ than  $\bar{\mathbf{T}}_{\alpha}$. This suggests that the graph signal $\boldsymbol f_0'$  is best viewed as living in $\ell^2(\mathcal{V},\pi)$ compared to $\ell^2(\mathcal{V})$ when the samples in $\boldsymbol y$ are selected from a distribution proportional to $\pi$. Previously, we observed that it is more suitable for the graph signal $\boldsymbol f_0'$ to belong to $\ell^2(\mathcal{V})$ than $\ell^2(\mathcal{V},\pi)$ when the distribution in $\boldsymbol y$ is uniform. This suggests that it may be a good idea in practice to adapt the measure of the Hilbert space to the sampling distribution if it is known.
% As a result, the distribution of the graph signal has an influence in the reconstruction performance through the use of learned filters based either on $\bar{\mathbf{P}}_{\alpha}$ or $\bar{\mathbf{T}}_{\alpha}$.
 \\

\paragraph{Subproblem 3}
We now consider the same problem~\eqref{signal_rec_prob_form} on the whole graph $\mathcal{G}$. In this subproblem, the random variables $y_j,j=1\ldots,N$ are distributed as  $y_j=\varepsilon_jf_{0}(v_j), \varepsilon_j\sim \operatorname{Ber}(p)$. For each proportion of known labels $p$, the correct reconstruction rates are obtained by averaging over 500 realizations of $\boldsymbol{y}$.
We summarize the following case in \cref{subproblem_three}.
\\

\begin{table}
\begin{center}
\small
\begin{tabular}{c c c}
\toprule
\textbf{Case} & Distribution $\boldsymbol y$ & Reference operators\\[0.5 cm]
\midrule
Case 1 & $y_j=\varepsilon_jf_{0}'(v_j)$,  $\varepsilon_j\sim \operatorname{Ber}(p)$ & $\mathbf{W}^{norm},\mathbf{P}_{\epsilon},\mathbf{P}_{G},\bar{\mathbf{P}}_{\epsilon},\bar{\mathbf{P}}_{G},\mathbf{T}_{\epsilon},\mathbf{T}_{G},\bar{\mathbf{T}}_{\epsilon},\bar{\mathbf{T}}_{G}$\\[0.5 cm]
\bottomrule
\end{tabular} 
\end{center}
 \caption{Summary table of the case of the subproblem 3.}
 \label{subproblem_three}
\end{table}
\normalsize

 \paragraph*{Numerical simulations}
 The reconstruction performance is shown in~\cref{fig:rec_signal_case_one_subp_three}. For all $p$ values, the reconstruction performance using filters based on $\mathbf{W}_{norm}$ is significantly worse than with all other reference operators. Compared to the other reference operators, $\mathbf{W}_{norm}$ is the only one that can be used without modifying the graph to make it strongly connected. Still, its performance is always worse.
 
Among the other reference operators, we notice a clearly better performance generally when using the reversibilizations $\bar{\mathbf{P}}_{\epsilon}$ and $\bar{\mathbf{P}}_{G}$ compared to the non-reversible random walks ${\mathbf{P}}_{\epsilon}$ and ${\mathbf{P}}_{G}$. Furthermore, the best reconstruction performance is obtained using a filter based on $\bar{\mathbf{T}}_{G}$ and the reconstruction performance using a filter based on $\bar{\mathbf{T}}_{\epsilon}$ is identical to $\bar{\mathbf{T}}_{G}$ for higher proportions of known labels.  
This differs from the results on $\mathcal{G}'$ where both would perform similarly. 

Finally, the reconstruction performance does not depend as much on the approach we use to make the graph strongly connected. It seems though that the Google approach slightly outperforms the rank-one approach, both for the non-reversible random walks and their reversibilizations.
\\
  \begin{figure}
 \includegraphics[width=1\columnwidth]{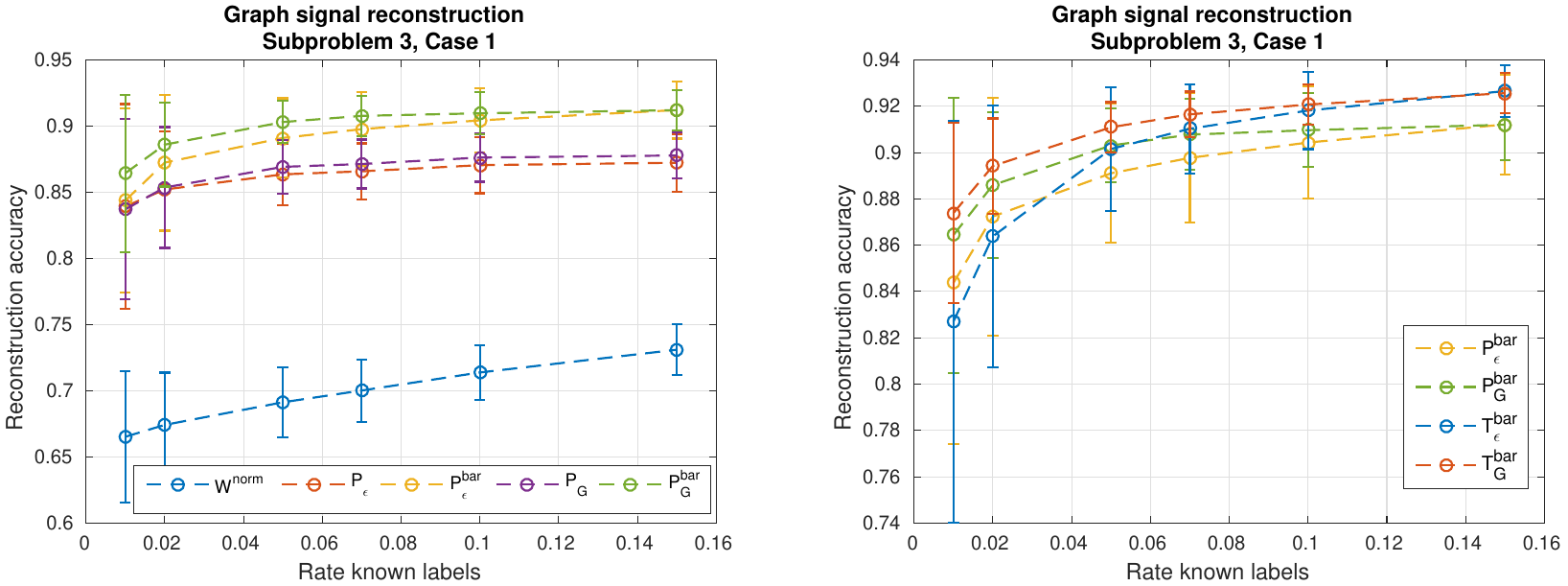}
 \caption{ (Subproblem 3, Case 1) Reconstruction of the graph signal $\boldsymbol f_0$ on $\mathcal{G}$.The notation $\operatorname{P_{\epsilon}^{bar}}$ (resp. $\operatorname{P_{G}^{bar}}$, $\operatorname{T_{G}^{bar}}$, $\operatorname{T_{\epsilon}^{bar}}$) corresponds to $\bar{\mathbf{P}}_{\epsilon}$ (resp. $\bar{\mathbf{P}}_{G}$, $\bar{\mathbf{T}}_{G}$, $\bar{\mathbf{T}}_{\epsilon}$).}
 \label{fig:rec_signal_case_one_subp_three}
 \end{figure}

\paragraph{Subproblem 4}
Here we consider the same problem~\eqref{signal_rec_prob_form} as in the previous section but now compare the performance using reference operators  $\bar{\mathbf{P}}_{\alpha}\in \bar{\mathcal{P}}$  and their respective equivalents $\bar{\mathbf{T}}_{\alpha}=\boldsymbol\Pi^{1/2}\bar{\mathbf{P}}_{\alpha}\boldsymbol\Pi^{-1/2}$. The random walk operator is built from the Google matrix approach (Approach 1). The random variables $y_j, j=1\ldots,N$ are distributed as  $y_j=\varepsilon_jf_{0}(v_j), \varepsilon_j\sim \operatorname{Ber}(p)$. We summarize the case in the~\cref{subproblem_four}.
\begin{table}[H]
\begin{center}
 \begin{adjustbox}{max width=\textwidth}
\begin{tabular}{c c c}
\toprule
\textbf{Case} & Distribution $\boldsymbol y$ & Reference operators\\[0.5cm]
\midrule
Case 1 & $y_j=\varepsilon_jf_{0}(v_j)$, $\varepsilon_j\sim \operatorname{Ber}(p)$ & $\bar{\mathbf{P}}_{\alpha}\in \bar{\mathcal{P}},  \bar{\mathbf{T}}_{\alpha}\in\bar{\mathcal{T}}$ \\[0.5 cm]
\bottomrule
\end{tabular} 
\end{adjustbox}
\end{center}
 \caption{Table of the case of the subproblem 4.}
 \label{subproblem_four}
\end{table}

  \begin{figure}
  \centering
 \includegraphics[width=1\columnwidth]{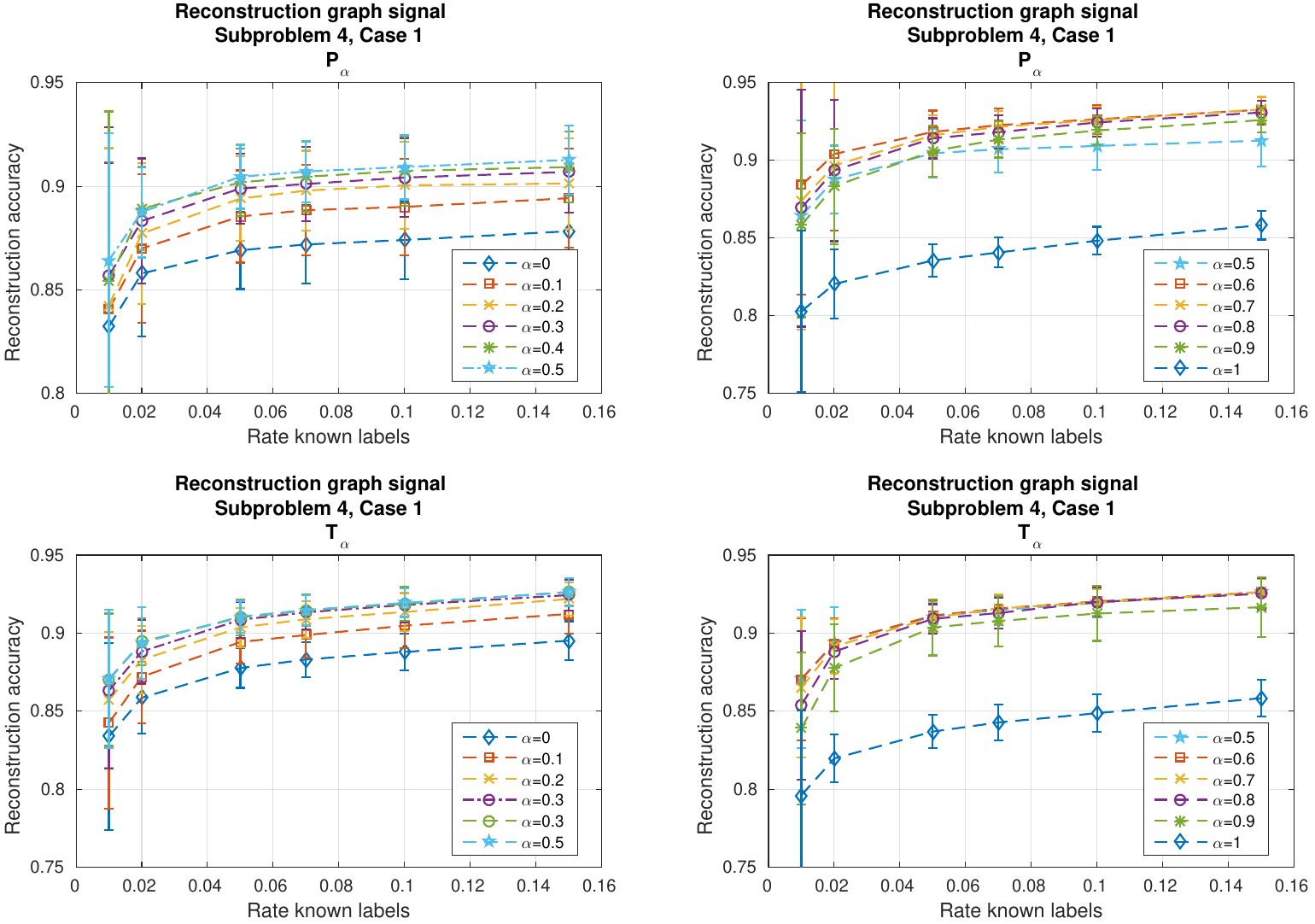}
 \caption{(Subproblem 4, case 1) Up: Reconstruction of the graph signal $\boldsymbol f_0$ on $\mathcal{G}$, $\bar{\mathbf{P}}_{\alpha}\in \bar{\mathcal{P}}$. Down:  Reconstruction of the graph signal $\boldsymbol f_0$ on $\mathcal{G}$, $\bar{\mathbf{T}}_{\alpha}\in \bar{\mathcal{T}}$.}
 \label{fig:rec_signal_case_one_subp_four}
 \end{figure}

\paragraph{Numerical simulations: Case 1}
We evaluate the reconstruction performance of $\boldsymbol f_0'$ using  filters either based on $\bar{\mathbf{P}}_{\alpha}, \alpha\in[0,1]$ at the top of \cref{fig:rec_signal_case_one_subp_four} or either based on $\bar{\mathbf{T}}_{\alpha}, \alpha\in[0,1]$ at the bottom of \cref{fig:rec_signal_case_one_subp_four}. For all proportions $p$ of known labels, the best reconstruction performance using a filter based on $\bar{\mathbf{P}}_{\alpha}$ is obtained for $\alpha\simeq 0.6$. We also notice that the reconstruction performance using a filter with $\alpha \in \{0.7,0.8\}$ is better than with $\alpha\simeq 0.5$. This suggests that, for this signal, graph filters based on $\bar{\mathbf{P}}_{\alpha}$ perform better when the convex combination of $\mathbf{P}$ and $\mathbf{P}^*$ involves more the time-reversed random walk $\mathbf{P}^*$ than the random walk $\mathbf{P}$ (i.e. when $\alpha > 0.5$). At the bottom of~\cref{fig:rec_signal_case_one_subp_four}, the best performance is obtained for a filter $\bar{\mathbf{T}}_{\alpha}$ with $\alpha\simeq 0.5$ overall. The performance is worse when $\alpha$ increases from $\alpha=0.5$ to $\alpha=1$. For all proportions $p$ of known labels, the reconstruction performance are better overall if the graph signal $\boldsymbol f_0$ is in $\ell^2(\mathcal{V})$ than in $\ell^2(\mathcal{V},\pi)$. In other words, the reconstruction performance using filters based on $\bar{\mathbf{T}}_{\alpha}$ is better than using filters based on $\bar{\mathbf{P}}_{\alpha}$. 

To summarize :
\begin{itemize}
\item Both the Hilbert space where we choose for the graph signal and the sampling distribution impact the performance. For the example presented here, we obtain better performance when we consider the Hilbert space whose measure corresponds to the sampling probability.
\item Graph filters based on the random walk operator yield better performance than those based on the adjacency matrix. 
\item The choice of the parameter $\alpha$ in the convex combination between $\mathbf{P}$ and $\mathbf{P}^{*}$ has an influence on the performance. For the example presented here, the optimal convex combination of $\mathbf{P}$ and $\mathbf{P}^{*}$ (resp. $\mathbf{T}$ and $\mathbf{T}^{*}$) always involves more the forward random walk ($\mathbf{P}$) than the backward random walk $\mathbf{P}^{*}$, except in one case.
\end{itemize}

\section{Multi-resolution analyses on directed graphs}
 \label{sect:mra}
 In the previous sections, we proposed a Fourier like basis on directed graphs, as the set of random walk eigenvectors and determined a frequency analysis by studying their smoothness. This allows the definition of low-pass or band-pass filters, hence we are now able to construct multiresolution analyses of functions over directed graphs. In a first instance, we propose wavelet frames made of analysis and synthesis graph filter banks. This multi-scale construction is closely related to the diffusion polynomial frames~\cite{maggioni2008diffusion} and spectral graph wavelets~\cite{hammond2011wavelets}. In a second instance, we propose a critically sampled wavelet construction on directed graphs generalizing the diffusion wavelets framework~\cite{coifman2006diffusion}. For the sake of simplicity, we will present our theoretical constructions in the $\ell^2(\mathcal{V})$ space. The linear transformation $\varphi$ defined in \cref{isometric_op} and its inverse may be used to express the constructed wavelets and scaling functions in $\ell^2(\mathcal{V,\pi})$ or to map measured signals from $\ell^2(\mathcal{V,\pi})$ into $\ell^2(\mathcal{V})$.

  % can be used to map to go into the $\ell^2(\mathcal{V},\pi)$ space. 

\subsection{Redundant wavelet transform on directed graphs}
\label{spectral_wavelet_dg}

 In this section, we propose a redundant wavelet transform on directed graphs. We follow the construction of spectral wavelets on undirected graphs~\cite{hammond2011wavelets} and polynomial diffusion frames~\cite{maggioni2008diffusion}. The novelty is the construction of filters designed in the frequency domain via a frequency response function as a linear combination of projectors onto the associated mono-frequency random walk subspaces. We introduce the necessary elements for the construction of redundant wavelets on directed graphs. 
 
 \subsubsection{Theoretical framework}
Let $\mathcal{G}=(\mathcal{V},\mathcal{E})$ be a strongly connected directed graph with cardinality $|\mathcal{V}|=N$. The directed graph $\mathcal{G}$ is characterized by its adjacency matrix $\mathbf{W}\in\mathbb{R}^{N\times N}_{+}$.
On $\mathcal{G}$, we define a random walk $\mathcal{X}$ characterized by its transition matrix $\mathbf{P}=\mathbf{D}^{-1}\mathbf{W}\in\mathbb{R}^{N\times N}_{+}$. $\mathcal{X}$ is assumed ergodic with stationary distribution $\pi$. Let $\mathbf{T}=\boldsymbol\Pi^{1/2}\mathbf{P}\boldsymbol\Pi^{-1/2}\in \ell^2(\mathcal{V})$ be the operator similar to $\mathbf{P}$ in $\ell^2(\mathcal{V})$. We assume $\mathbf{T}$ diagonalizable. We now define low-pass and band-pass operators as follows. 

\begin{definition}
A low pass operator at dilation $t$, $\mathbf{H}_{t}^{\mathcal{G}}$ on a strongly connected directed graph $\mathcal{G}$ is defined by
\begin{equation*}
\mathbf{H}_{t}^{\mathcal{G}}=\sum_{\ell=1}^p h(t\omega_{\ell})\mathbf{S}_{\ell}, \quad t\in \mathbb{N}.
\end{equation*}
with $h:[0,2]\rightarrow \mathbb{R}$, a function giving a low-pass frequency response and $\{\mathbf{S}_{\ell}\}_{\ell=1}^p$, the set of projectors~\eqref{eq:mono_freq_proj} associated to the frequencies $\omega_\ell$ of the eigenvectors of $\mathbf{T}$. The scaling function at dilation $t$ and translation $y$ is denoted by
\begin{equation*}
\mathfrak{h}_{t,y}^{\mathcal{G}}= \mathbf{H}_{t}^{\mathcal{G}}\boldsymbol\delta_{y}
\end{equation*}
where $\boldsymbol\delta_{y}$ is the Kronecker delta function at vertex $y\in \mathcal{V}$. 
\end{definition}

\begin{definition}
A band pass operator at dilation $t$, $\mathbf{G}_{t}^{\mathcal{G}}$ on a strongly connected directed graph $\mathcal{G}$ is defined as
\begin{equation*}
\mathbf{G}_{t}^{\mathcal{G}}=\sum_{\ell=1}^p g(t\omega_{\ell})\mathbf{S}_{\ell}, \quad t\in \mathbb{N}.
\end{equation*}
with $g:[0,2]\mapsto \mathbb{R}$, a function giving a band-pass frequency response and $\{\mathbf{S}_{\ell}\}_{\ell=1}^p$, the set of projectors~\eqref{eq:mono_freq_proj} associated to the frequencies $\omega_\ell$ of the eigenvectors of $\mathbf{T}$. The wavelet function at dilation $t$ and translation $y$ is denoted by  
\begin{equation*}
\mathfrak{g}_{t,y}^{\mathcal{G}}=\mathbf{G}_{t}^{\mathcal{G}}\boldsymbol\delta_{y}, 
\end{equation*}
where $\boldsymbol\delta_{y}$ is the Kronecker delta function at vertex $y\in \mathcal{V}$. 
\end{definition}

Having defined the low-pass and band-pass operators on directed graphs, we are able to build analysis and synthesis filter banks on directed graphs. 

\begin{definition}
We define a bank of synthesis filters $\mathcal{K}$ as a set comprising a low-pass filter at dilation $t_J$, $\mathbf{H}_{t_J}^{\mathcal{G}}$ and a series of band-pass filters at increasing dilations $\{t_j\}_{j=1}^J$, $\{\mathbf{G}_{t_j}\}_{j=1}^J$ :
\begin{equation*}
\mathcal{K}=\{\mathbf{H}_{t_J}^{\mathcal{G}},\mathbf{G}_{t_1}^{\mathcal{G}},\ldots,\mathbf{G}_{t_J}^{\mathcal{G}}\}.
\end{equation*}
\end{definition}

\begin{definition}
We define a bank of analysis filters $\tilde{\mathcal{K}}$ as a set comprising a
filter at dilation $t_J$, $\tilde{\mathbf{H}}_{t_J}^{\mathcal{G}}$ and a series of filters at increasing dilations $\{t_j\}_{j=1}^J$, $\{\tilde{\mathbf{G}_{t_j}}\}_{j=1}^J$ :

\begin{equation*}
\tilde{\mathcal{K}}=\{\tilde{\mathbf{H}}_{t_J}^{\mathcal{G}},\tilde{\mathbf{G}}_{t_1}^{\mathcal{G}},\ldots,\tilde{\mathbf{G}}_{t_J}^{\mathcal{G}}\},
\end{equation*}
where 
\begin{equation*}
\tilde{\mathbf{H}}_{t_J}^{\mathcal{G}}=\sum_{\ell=1}^p \tilde{h}(t\omega_{\ell})\mathbf{S}_{\ell}\in\mathbb{R}^{N\times N},
\end{equation*}
and
\begin{equation*}
\tilde{\mathbf{G}}_{t_j}^{\mathcal{G}}=\sum_{\ell=1}^p \tilde{g}(t\omega_{\ell})\mathbf{S}_{\ell}\in\mathbb{R}^{N\times N}, \quad \forall j=1,\ldots,J.
\end{equation*}

We also define $\tilde{\mathfrak{h}}_{t,k}$ and $\tilde{\mathfrak{g}}_{t,k}$ as row vectors as follows:
\begin{equation*}
\tilde{\mathfrak{h}}_{t,k}=\boldsymbol\delta_k^{\top}\tilde{\mathbf{H}}_{t}^{\mathcal{G}}, \quad \tilde{\mathfrak{g}}_{t,k}=\boldsymbol\delta_k^{\top}\tilde{\mathbf{G}}_{t}^{\mathcal{G}}.
\end{equation*}

\end{definition}

\begin{prop}
\label{perfect_rec_cond}
Given a fixed set of increasing dilations $\{t_j\}_{j=1}^J$, the synthesis and analysis filters sets respectively $\mathcal{K}$ and $\tilde{\mathcal{K}}$, the perfect reconstruction condition
\begin{equation}
\label{perfect_rec}
\mathbf{H}_{t_J}^{\mathcal{G}}\tilde{\mathbf{H}}_{t_J}^{\mathcal{G}}+\sum_{j=1}^J\mathbf{G}_{t_j}^{\mathcal{G}}\tilde{\mathbf{G}}_{t_j}^{\mathcal{G}}=\mathbf{I}
\end{equation}
is guaranteed if and only if
\begin{equation*}
{h}(t_J\omega_{\ell}) \tilde{h}(t_J\omega_\ell)+\sum_{j=1}^J g(t_j\omega_\ell)\tilde{g}(t_j\omega_\ell)=1, \quad \forall \ell=1,\ldots,p.
\end{equation*}
\end{prop}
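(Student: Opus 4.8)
The plan is to exploit the fact that every operator appearing in~\eqref{perfect_rec} is a (non-polynomial) linear combination of one and the same family of mono-frequency random walk projectors $\{\mathbf{S}_\ell\}_{\ell=1}^p$ of $\mathbf{T}$. Since $\mathbf{T}$ is assumed diagonalizable, its eigenprojectors form a resolution of the identity; grouping them by frequency, i.e. by the common value $\omega_\ell = 1-\mathfrak{Re}(\vartheta)$ as in $\mathbf{S}_\omega=\sum_{\vartheta:\,1-\mathfrak{Re}(\vartheta)=\omega}\mathbf{E}_\vartheta$, yields projectors satisfying $\mathbf{S}_\ell\mathbf{S}_{\ell'}=\delta_{\ell\ell'}\mathbf{S}_\ell$ and $\sum_{\ell=1}^p\mathbf{S}_\ell=\mathbf{I}$, with each $\mathbf{S}_\ell\neq 0$. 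First I would record these two properties explicitly, stressing that diagonalizability is exactly what is needed here (a mere Jordan decomposition would not give mutually annihilating idempotents summing to $\mathbf{I}$).

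Next I would substitute the definitions of $\mathbf{H}_{t_J}^{\mathcal{G}}$, $\tilde{\mathbf{H}}_{t_J}^{\mathcal{G}}$, $\mathbf{G}_{t_j}^{\mathcal{G}}$ and $\tilde{\mathbf{G}}_{t_j}^{\mathcal{G}}$ into the left-hand side of~\eqref{perfect_rec} and expand the products. Using $\mathbf{S}_\ell\mathbf{S}_{\ell'}=\delta_{\ell\ell'}\mathbf{S}_\ell$, each double sum collapses to a single sum over $\ell$: for instance $\mathbf{H}_{t_J}^{\mathcal{G}}\tilde{\mathbf{H}}_{t_J}^{\mathcal{G}}=\sum_{\ell=1}^p h(t_J\omega_\ell)\tilde{h}(t_J\omega_\ell)\mathbf{S}_\ell$, and likewise $\mathbf{G}_{t_j}^{\mathcal{G}}\tilde{\mathbf{G}}_{t_j}^{\mathcal{G}}=\sum_{\ell=1}^p g(t_j\omega_\ell)\tilde{g}(t_j\omega_\ell)\mathbf{S}_\ell$. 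Collecting the terms, the left-hand side of~\eqref{perfect_rec} becomes $\sum_{\ell=1}^p\big(h(t_J\omega_\ell)\tilde{h}(t_J\omega_\ell)+\sum_{j=1}^J g(t_j\omega_\ell)\tilde{g}(t_j\omega_\ell)\big)\mathbf{S}_\ell$, whereas the right-hand side is $\mathbf{I}=\sum_{\ell=1}^p\mathbf{S}_\ell$.

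Finally I would conclude using linear independence of the projectors: if $\sum_\ell c_\ell\mathbf{S}_\ell=\mathbf{0}$, then multiplying by $\mathbf{S}_{\ell'}$ gives $c_{\ell'}\mathbf{S}_{\ell'}=\mathbf{0}$, hence $c_{\ell'}=0$ because $\mathbf{S}_{\ell'}\neq 0$. Applying this to the difference of the two expressions above shows that~\eqref{perfect_rec} holds if and only if $h(t_J\omega_\ell)\tilde{h}(t_J\omega_\ell)+\sum_{j=1}^J g(t_j\omega_\ell)\tilde{g}(t_j\omega_\ell)=1$ for every $\ell=1,\dots,p$, that is, at each frequency $\omega_\ell$ actually realized by $\mathbf{T}$. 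There is no real obstacle beyond bookkeeping; the only point requiring care is making sure the diagonalizability hypothesis is invoked precisely where the resolution of the identity and the idempotence relations are asserted. I would also add the cosmetic remark that the displayed condition is to be read as quantified over the finite set $\{\omega_\ell\}_{\ell=1}^p\subset[0,2]$ rather than over the whole interval $[0,2]$.
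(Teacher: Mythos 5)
Your proposal is correct and follows essentially the same route as the paper: expand the filter products, use the mutual orthogonality $\mathbf{S}_{\ell}\mathbf{S}_{\ell'}=\delta_{\ell\ell'}\mathbf{S}_{\ell}$ to collapse the double sums, and match coefficients against $\mathbf{I}=\sum_{\ell}\mathbf{S}_{\ell}$. You are in fact slightly more careful than the paper, which leaves implicit the resolution of the identity and the linear-independence argument needed for the ``only if'' direction, and your remark that the condition is really quantified over the finite set $\{\omega_{\ell}\}_{\ell=1}^{p}$ rather than all of $[0,2]$ matches the paper's own concluding line.
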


 \begin{proof}
 \begin{align*}
 \mathbf{H}_{t_J}^{\mathcal{G}}\tilde{\mathbf{H}}_{t_J}^{\mathcal{G}}+\sum_{j=1}^J\mathbf{G}_{t_j}^{\mathcal{G}}\tilde{\mathbf{G}}_{t_j}^{\mathcal{G}}&=\sum_{\ell,\ell'=1}^p h(t_J\omega_{\ell})\tilde{h}(t_J\omega_{\ell'})\mathbf{S}_{\ell} \mathbf{S}_{\ell'}+\sum_{j=1}^{J} \sum_{\ell,\ell'=1}^p g(t_j\omega_{\ell})\tilde{g}(t_j\omega_{\ell'})\mathbf{S}_{\ell} \mathbf{S}_{\ell'}\\
 &=\sum_{\ell=1}^p h(t_J\omega_{\ell})\tilde{h}(t_J\omega_{\ell})\mathbf{S}_{\ell} +\sum_{j=1}^{J} \sum_{\ell=1}^p g(t_j\omega_{\ell})\tilde{g}(t_j\omega_{\ell})\mathbf{S}_{\ell}\\
&=\sum_{\ell=1}^p \big[h(t_J\omega_{\ell})\tilde{h}(t_J\omega_{\ell}) +\sum_{j=1}^{J}  g(t_j\omega_{\ell})\tilde{g}(t_j\omega_{\ell})\big]\mathbf{S}_{\ell}
 \end{align*}
 
  Consequently, \eqref{perfect_rec} is guaranteed if and only if
 \begin{equation*}
 h(t_J\omega_{\ell})\tilde{h}(t_J\omega_{\ell}) +\sum_{j=1}^{J}  g(t_j\omega_{\ell})\tilde{g}(t_j\omega_{\ell})=1, \quad \forall \ell=1,\ldots,p.\qedhere
 \end{equation*}
 \end{proof}
 
 We now define a frame as follows. 
 \begin{definition}
Let $\mathcal{G}=(\mathcal{V},\mathcal{E})$ be a directed graph and $\mu$ a measure on $\mathcal{V}$. A countable family of elements $\{\boldsymbol f_k\}_{k=1}^n \in \ell^2(\mathcal{V})$ is said to be a frame if for any graph signal $\boldsymbol f \in \ell^2(\mathcal{V})$ we have  

\begin{equation*}
A\|\boldsymbol f\|^2\leq\sum_{k=1}^n|\langle \boldsymbol f,\boldsymbol f_k\rangle|^2\leq B\|\boldsymbol f\|^2,
\end{equation*}
for some constants $0<A\leq B<\infty$ which are called frame bounds.
\end{definition}

We introduce the rectangular matrix $\mathbf{K}=(\mathbf{H}_{t_J}^{\mathcal{G}},\mathbf{G}_{t_1}^{\mathcal{G}},\ldots,\mathbf{G}_{t_J}^{\mathcal{G}})\in\mathbb{R}^{N\times N(J+1)}$ where :
\begin{equation*}
\mathbf{K}=
\begin{pmatrix}
\mathfrak{h}_{t_J,1}^{\mathcal{G}},\ldots,\mathfrak{h}_{t_J,N}^{\mathcal{G}},\mathfrak{g}_{t_1,1}^{\mathcal{G}},\ldots,\mathfrak{g}_{t_J,1}^{\mathcal{G}},\ldots,\mathfrak{g}_{t_J,N}^{\mathcal{G}}
\end{pmatrix}
\in\mathbb{R}^{N\times N(J+1)}.
\end{equation*}

We also introduce the following rectangular matrix $\tilde{\mathbf{K}}=(\tilde{\mathbf{H}}_{t_J}^{\mathcal{G}\top},\tilde{\mathbf{G}}_{t_1}^{\mathcal{G}\top},\ldots,\tilde{\mathbf{G}}_{t_J}^{\mathcal{G}\top})^\top\in\mathbb{R}^{N(J+1)\times N}$  
\begin{equation}
\tilde{\mathbf{K}}=
\begin{pmatrix}
\tilde{\mathfrak{h}}_{t_J,1}^{\mathcal{G}}\\
\vdots\\
\tilde{\mathfrak{h}}_{t_J,N}^{\mathcal{G}}\\
\tilde{\mathfrak{g}}_{t_1,1}^{\mathcal{G}}\\
\vdots\\
\tilde{\mathfrak{g}}_{t_J,1}^{\mathcal{G}}\\
\vdots\\
\tilde{\mathfrak{g}}_{t_J,N}^{\mathcal{G}}
\end{pmatrix}
\in\mathbb{R}^{N(J+1)\times N}.
\end{equation}

\begin{prop}
 Let $\mathbf{K}$ and $\tilde{\mathbf{K}}$  be respectively the synthesis and analysis filter banks. We assume that the filter banks $\mathbf{K}$ and $\tilde{\mathbf{K}}$ verify the perfect reconstruction condition stated in \cref{perfect_rec_cond}. Consequently, $\tilde{\mathbf{K}}$ is a frame for $\ell^2(\mathcal{V})$ with lower frame bound $1/\|\mathbf{K}\|^2$ and upper frame bound $\|\tilde{\mathbf{K}}\|^2$. 
 \end{prop}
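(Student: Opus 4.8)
The plan is to read the perfect reconstruction condition \eqref{perfect_rec} as an operator identity and then deduce the two frame inequalities from submultiplicativity of the operator norm. First I would note that, by the way $\mathbf{K}$ and $\tilde{\mathbf{K}}$ are assembled from blocks, block multiplication gives
\[
\mathbf{K}\tilde{\mathbf{K}}=\mathbf{H}_{t_J}^{\mathcal{G}}\tilde{\mathbf{H}}_{t_J}^{\mathcal{G}}+\sum_{j=1}^J\mathbf{G}_{t_j}^{\mathcal{G}}\tilde{\mathbf{G}}_{t_j}^{\mathcal{G}},
\]
so that \Cref{perfect_rec_cond} is exactly the statement $\mathbf{K}\tilde{\mathbf{K}}=\mathbf{I}_N$. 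Since $\mathbf{K}\in\mathbb{R}^{N\times N(J+1)}$ and $\tilde{\mathbf{K}}\in\mathbb{R}^{N(J+1)\times N}$ are fixed finite real matrices, both $\|\mathbf{K}\|$ and $\|\tilde{\mathbf{K}}\|$ (operator norms) are finite, and $\|\mathbf{K}\|>0$ because $\mathbf{K}\tilde{\mathbf{K}}=\mathbf{I}\neq 0$ forces $\mathbf{K}\neq 0$.

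Next I would identify the candidate frame. The rows of $\tilde{\mathbf{K}}$ are precisely the row vectors $\tilde{\mathfrak{h}}_{t_J,k}^{\mathcal{G}}$ for $k=1,\dots,N$, followed by the $\tilde{\mathfrak{g}}_{t_j,k}^{\mathcal{G}}$ for $j=1,\dots,J$ and $k=1,\dots,N$; transposing them yields a family $\{\boldsymbol f_m\}_{m=1}^{N(J+1)}\subset\ell^2(\mathcal{V})$. Because $\tilde{\mathbf{K}}$ is real, the $m$-th coordinate of $\tilde{\mathbf{K}}\boldsymbol f$ equals $\langle\boldsymbol f_m,\boldsymbol f\rangle$ for every $\boldsymbol f\in\ell^2(\mathcal{V})$, hence
\[
\sum_{m=1}^{N(J+1)}|\langle\boldsymbol f,\boldsymbol f_m\rangle|^2=\sum_{m=1}^{N(J+1)}|\langle\boldsymbol f_m,\boldsymbol f\rangle|^2=\|\tilde{\mathbf{K}}\boldsymbol f\|^2 .
\]
The proof then reduces to bounding $\|\tilde{\mathbf{K}}\boldsymbol f\|^2$ above and below by multiples of $\|\boldsymbol f\|^2$.

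The upper estimate is immediate: $\|\tilde{\mathbf{K}}\boldsymbol f\|^2\le\|\tilde{\mathbf{K}}\|^2\|\boldsymbol f\|^2$, giving the upper frame bound $B=\|\tilde{\mathbf{K}}\|^2$. For the lower estimate I would use the perfect reconstruction identity to write $\boldsymbol f=\mathbf{K}\tilde{\mathbf{K}}\boldsymbol f$, so that $\|\boldsymbol f\|\le\|\mathbf{K}\|\,\|\tilde{\mathbf{K}}\boldsymbol f\|$ by submultiplicativity; rearranging gives $\|\tilde{\mathbf{K}}\boldsymbol f\|^2\ge\|\mathbf{K}\|^{-2}\|\boldsymbol f\|^2$, i.e. the lower frame bound $A=1/\|\mathbf{K}\|^2$. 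Finally $0<A$ because $\|\mathbf{K}\|<\infty$, $B<\infty$ because $\tilde{\mathbf{K}}$ is a finite matrix, and $A\le B$ because both inequalities hold simultaneously (apply them to any unit-norm $\boldsymbol f$). This establishes that $\{\boldsymbol f_m\}$, equivalently $\tilde{\mathbf{K}}$, is a frame for $\ell^2(\mathcal{V})$ with the stated bounds.

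There is no substantial obstacle here: the argument is essentially the standard fact that a left inverse of a tall matrix yields a frame. The only points needing care are the block-multiplication bookkeeping that turns \eqref{perfect_rec} into $\mathbf{K}\tilde{\mathbf{K}}=\mathbf{I}$, and the observation that the frame vectors are the (transposed) rows of the \emph{analysis} bank $\tilde{\mathbf{K}}$ rather than of $\mathbf{K}$, so that $\sum_m|\langle\boldsymbol f,\boldsymbol f_m\rangle|^2=\|\tilde{\mathbf{K}}\boldsymbol f\|^2$.
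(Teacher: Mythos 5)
Your proof is correct and takes essentially the same route as the paper's: identify $\sum_m|\langle\boldsymbol f,\boldsymbol f_m\rangle|^2$ with $\|\tilde{\mathbf{K}}\boldsymbol f\|^2$, get the upper bound from the norm of $\tilde{\mathbf{K}}$, and get the lower bound from $\boldsymbol f=\mathbf{K}\tilde{\mathbf{K}}\boldsymbol f$ together with submultiplicativity. If anything, your write-up is slightly more careful than the paper's (you make explicit that perfect reconstruction is the block identity $\mathbf{K}\tilde{\mathbf{K}}=\mathbf{I}$ and that $\|\mathbf{K}\|>0$, and you correctly attribute the upper bound to the operator norm rather than to Cauchy--Schwarz).
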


\begin{proof}
% By definition of the spectral norm, 
% \begin{equation*}
%   \|\tilde{\mathbf{K}}\boldsymbol f\|^2 \leq \|\tilde{\mathbf{K}}\|^2\|\boldsymbol f\|^2.
% \end{equation*}

The perfect reconstruction condition implies that $\|\boldsymbol f\|^2=\|\mathbf{K}\tilde{\mathbf{K}}\boldsymbol f\|^2\leq \|\mathbf{K}\|^2\|\tilde{\mathbf{K}}\boldsymbol f\|^2$. Hence, we have the frame inequalities
\begin{equation}
  \frac{1}{\|\mathbf{K}\|^2}\|\boldsymbol f\|^2\leq \|\tilde{\mathbf{K}}\boldsymbol f\|^2 \leq \|\tilde{\mathbf{K}}\|^2\|\boldsymbol f\|^2,
\end{equation}
where the second inequality is just the definition of the spectral norm.
$\|\mathbf{K}\|^2$ and $\|\tilde{\mathbf{K}}\|^2$ are finite because of finite dimensions, hence the frame bounds verify $0<\|\mathbf{K}\|^2 
\leq\|\tilde{\mathbf{K}}\|^2<\infty$.
\end{proof}

% We have
% \begin{equation*}
% \|\tilde{\mathbf{K}}\boldsymbol f\|^2=\sum_{k=1}^N|\langle \tilde{\mathfrak{h}}_{t_J,k}^{\mathcal{G}},\boldsymbol f\rangle|^2+\sum_{j=1}^J\sum_{k=1}^N |\langle \tilde{\mathfrak{g}}_{t_j,k}^{\mathcal{G}},\boldsymbol f\rangle|^2.
% \end{equation*}
% Consequently, we can write 
% \begin{align*}
% & A\|\boldsymbol f\|^2\leq\|\tilde{\mathbf{K}}\boldsymbol f\|^2\leq B\|\boldsymbol f\|^2,\\
% &A\|\boldsymbol f\|^2\leq\sum_{k=1}^N|\langle \tilde{\mathfrak{h}}_{t_J,k}^{\mathcal{G}},\boldsymbol f\rangle|^2+\sum_{j=1}^J\sum_{k=1}^N |\langle \tilde{\mathfrak{g}}_{t_j,k}^{\mathcal{G}},\boldsymbol f\rangle|^2\leq B\|\boldsymbol f\|^2,
% \end{align*}
% where $A=1/\|\mathbf{K}\|^2$ et $B=\|\tilde{\mathbf{K}}\|^2$. 

% The right side is obtained by Cauchy-Schwarz inequality applied on $\|\tilde{\mathbf{K}}\boldsymbol f\|^2$. The left side is obtained by the fact that $\|\boldsymbol f\|^2=\|\mathbf{K}\tilde{\mathbf{K}}\boldsymbol f\|^2\leq \|\mathbf{K}\|^2\|\tilde{\mathbf{K}}\boldsymbol f\|^2$ and application of the Cauchy-Schwarz inequality. As a result, $\tilde{\mathbf{K}}$ is a frame.
% \end{proof}

 \begin{definition}[Wavelet decomposition of a signal on a directed graph]
 Any graph signal $\boldsymbol f\in\ell^2(\mathcal{V})$ can be expressed as follows : 
 \begin{equation*}
 \boldsymbol f=\sum_{k=1}^N \langle \boldsymbol f,\tilde{\mathfrak{h}}_{t_J,k}\rangle\,\mathfrak{h}_{t_J,k}+\sum_{j=1}^J\sum_{k=1}^N\langle \boldsymbol f,\tilde{\mathfrak{g}}_{t_j,k}\rangle\,\mathfrak{g}_{t_j,k}.
 \end{equation*}
 \end{definition}

\subsection{Critically sampled wavelet transform on directed graphs}
\label{diffusion_wavelets_dg}

\subsubsection{Notations}
\label{notations}

We first recall relevant and useful notations for a clear understanding of the construction of diffusion wavelets  \cite{coifman2006diffusion,maggioni2005biorthogonal}. Two notations are introduced: one for the representation of linear transformations as matrices and a second for the representation of sets of vectors as matrices where columns correspond to a vector in the set. These notations were used in \cite{coifman2006diffusion,maggioni2005biorthogonal} except that we adopt a column vector convention whereas a row vector convention was used in \cite{coifman2006diffusion,maggioni2005biorthogonal}.

Let $\mathsf{V}_0=\ell^2(\mathcal{V})$ be the space of functions defined over the vertices of a directed graph $\mathcal{G}=(\mathcal{V},\mathcal{E})$. If $\mathbf{L}$ is a linear transformation of $\mathsf{V}_0$ into $\mathsf{V}_0$, $[\mathbf{L}]_{B_1}^{B_2}$ indicates the matrix representing the linear transformation $\mathbf{L}$ with respect to the basis $B_1$ in the domain and $B_2$ in the range. A set of vectors $X$ represented in a given basis $B$ will be written in the matrix form  $[X]_{B}$ where the columns of $[X]_{B}$ are the coordinates of the vectors $X$ in the coordinate system defined by $B$. Generally, $[B_1]_{B_2}=[\mathbf{I}]_{B_1}^{B_2}$ represent the basis vectors $B_1$ in terms of the basis $B_2$. We will note that, for a given basis $B$ if the input and output bases are the same, $[\mathbf{I}]_{B}^{B}=\mathbf{I}$ the identity matrix. We will also abuse this notation if $B_1$ spans a subspace of the space spanned by $B_2$. If $B_2$ spans a subspace of the space spanned by $B_1$ instead, then we use the convention $[\mathbf{I}]_{B_1}^{B_2} = ([\mathbf{I}]_{B_2}^{B_1})^+$, where $\mathbf{X}^+$ denotes the Moore-Penrose pseudo-inverse of $\mathbf{X}$.
If $B_1$ and $B_2$ are two bases, the representations of $X$ in $B_1$ and $B_2$ are related as follows: $[X]_{B_2} = [\mathbf I]_{B_1}^{B_2}[X]_{B_1}$.  If $B_1,B_2,B_3,B_4$ are arbitrary bases, the matrix representation of $\mathbf{L}$ with respect to the basis $B_1$ in the domain and $B_2$ in the range in the different bases is expressed as 
\begin{equation*}
[\mathbf{L}]_{B_1}^{B_2}=[\mathbf{I}]_{B_4}^{B_2}[\mathbf{L}]_{B_3}^{B_4}[\mathbf{I}]_{B_2}^{B_1}. 
\end{equation*}
Furthermore, if $B_1$ and $B_2$ are linearly independent sets of vectors that do not span the whole space $\mathsf{V}_0$ then we will still use the notation $[\mathbf{L}]_{B_1}^{B_2}$, but in that case it will represent the \emph{restriction} of the linear transformation $\mathbf L$ to the domain and range subspaces spanned by $B_1$ and $B_2$.

\subsubsection{Diffusion wavelets}
The construction of the diffusion wavelets~\cite{coifman2006diffusion,maggioni2005biorthogonal} enables a multi-resolution analysis of functions on graphs generalizing the concept of classical multiresolution analysis~\cite{mallat1989theory}. The starting point of this construction is a diffusion operator $\mathbf{T}$. Similarly to the classical multiresolution analysis, diffusion wavelets is characterized by a family of nested subspaces $\mathsf{V}_0\supseteq \mathsf{V}_1\supseteq\cdots\supseteq \mathsf{V}_j\supseteq\cdots$, where each subspace $\mathsf{V}_j $ is spanned by a basis of scaling functions $\Phi_j$. The complement of $\mathsf{V}_{j+1}$ into $\mathsf{V}_{j}$  is called $\mathsf{W}_{j}$ and is spanned by a set of diffusion wavelets $\Psi_j$.

\subsubsection{Construction}
\label{ssec:DiffWaveletConstruction}
The construction of diffusion wavelets proceeds with a diffusion operator $\mathbf{T}$ defined on a directed strongly connected graph $\mathcal{G}=(\mathcal{V},\mathcal{E})$. In the original diffusion wavelets framework by Coifman and Maggioni \cite{coifman2006diffusion}, the graph is assumed undirected and they suggest using the reversible random walk operator $\mathbf{P}$. 

More generally, any operator $\mathbf{T}$ can be used as long as it acts like a low-pass filter. Given a  directed graph $\mathcal{G}$, an appropriate choice seems to be the operator $\mathbf{T}=\boldsymbol\Pi^{1/2}\mathbf{P}\boldsymbol\Pi^{-1/2}$ where $\mathbf{P}$ is the random walk operator. However, the random-walk operator is not guaranteed to have a low-pass frequency response as the highest frequency eigenvectors may have eigenvalues with modulus arbitrarily close to one (as e.g. for the cycle graph). In such a case, a low-pass filter can be defined using a lazy random walk instead: $\mathbf{T}=\boldsymbol\Pi^{1/2}(\gamma\mathbf{I}+(1-\gamma)\mathbf{P})\boldsymbol\Pi^{-1/2}$.

 % with low-pass frequency response with respect to the definition proposed in section~\ref{Fourier_analysis}.

 Assuming an ordering of vertices in $\mathcal{V}$, $\mathbf{T}$ is originally represented on the canonical basis $\Phi_0=\{\boldsymbol\delta_k\}_{k\in\mathcal{V}}$ of $\mathsf{V}_0$ where $\boldsymbol\delta_k$ is the Kronecker delta function corresponding to the vertex $k\in \mathcal{V}$. Using the notations from section \ref{notations}, $[\mathbf{T}]_{\Phi_0}^{\Phi_0}$ is the matrix representation of the linear operator $\mathbf{T}$ with respect to the basis $\Phi_0$ in the domain and $\Phi_0$ in the range.   
 
 The columns of the matrix $[\mathbf{T}]_{\Phi_0}^{\Phi_0}$ can be described as a set of functions $\widetilde{\Phi}_1=\{\widetilde{\boldsymbol\phi}_{1,k}\}_{k\in\mathcal{V}}$ represented in the basis $\Phi_0$. Each element $\widetilde{\boldsymbol\phi}_{1,k},k\in \mathcal{V}$ of $\widetilde{\Phi}_{1}$ corresponds to the action of $\mathbf{T}$ on the Kronecker $\boldsymbol\delta$-function at vertex $k$ and is represented as $[\widetilde{\boldsymbol\phi}_{1,k}]_{\Phi_0}=[\mathbf{T}]_{\Phi_0}^{\Phi_0}\boldsymbol\delta_k$. 
 
The low-pass property of $\mathbf{T}$ leads to the fact that each element $\widetilde{\boldsymbol\phi}_{1,k}$ of $\widetilde{\Phi}_{1}$ is a function localized around its vertex $k$ whose support extends to its close neighbors. In terms of the diffusion wavelets construction, the elements $\widetilde{\boldsymbol\phi}_{1,k},k\in \mathcal{V}$ are therefore interpreted as \emph{scaling} functions. Due to the fact that the support of each element $\widetilde{\boldsymbol\phi}_{1,k}$ covers a small neighborhood around their respective vertices, these elements $\widetilde{\boldsymbol\phi}_{1,k}$ can generally be well approximated by a linear combination of the other functions $\widetilde{\boldsymbol\phi}_{1,l}$ with $l\neq k$. The next stage of the construction is a column subset selection stage \cite{civril2009column}. The aim is to find a small number of columns of $\tilde{\Phi}_1$, forming a set $\mathcal{C}$ such that the residual $\|[\tilde{\Phi}_1]_{\Phi_0}-\Pi_\mathcal{C}[\tilde{\Phi}_1]_{\Phi_0}\|_{\beta}$ is as minimal as possible where $\Pi_{\mathcal{C}}$ is the projection matrix onto the space spanned by the columns of $\mathcal{C}$ and $\beta$ denotes the spectral norm or the Frobenius norm. 

More precisely, this step involves the selection of a subset $\{\widetilde{\boldsymbol\phi}_{1,k},k\in\mathcal{I}_1\}$ from $\widetilde{\Phi}_{1}$, $|\mathcal{I}_1|\leq|\mathcal{V}|$. We select a subset $\{\widetilde{\boldsymbol\phi}_{1,k},k\in \mathcal{I}_1\}$ such that all $\widetilde{\boldsymbol\phi}_{1,k}$ are well-enough approximated by linear combinations of the functions in the subset. In classical signal processing terms, this step is analogous to a subsampling of a set of scaling functions in the classical discrete wavelet transform. Coifman and Maggioni used a greedy approach to build the set $\mathcal{I}_1$ iteratively by using a modified Gram-Schmidt orthogonalization procedure~\cite{coifman2006diffusion}. 

The subset $\{\widetilde{\boldsymbol\phi}_{1,k},k\in \mathcal{I}_1\}$ spans a subspace $\mathsf{V}_1$ which corresponds to the first approximation subspace of the multi-resolution analysis. We will thus denote $\{\widetilde{\boldsymbol\phi}_{1,k},k\in \mathcal{I}_1\}$ as $\Phi_1$, where $\Phi_1$ is by definition a basis of $\mathsf{V}_1$ that is generally not orthogonal. The vectors in $\Phi_1$ are the \emph{scaling functions} at scale 1.

In order to define the next scales of scaling functions, we consider the compression step of the diffusion operator $\mathbf{T}$ on the subspace $\mathsf{V}_1$, that is its restriction on $\mathsf{V}_1$. The latter can be represented in $\Phi_1$ as 

\begin{equation*}
[\mathbf{T}]_{\Phi_1}^{\Phi_1}=[\mathbf{I}]_{\Phi_0}^{\Phi_1}[\mathbf T]_{\Phi_0}^{\Phi_0} [\mathbf{I}]_{\Phi_1}^{\Phi_0}.
\end{equation*}
where $[\mathbf{I}]_{\Phi_0}^{\Phi_1}$ represents the restriction of the signal space $\mathsf{V}_0$ to $\mathsf{V}_1$ (with respective bases $\Phi_0$ and $\Phi_1$) and $[\mathbf{I}]_{\Phi_1}^{\Phi_0}$ represents the embedding of $\mathsf{V}_1$ in $\mathsf{V}_0$.

The next approximation space $\mathsf{V}_2\subset \mathsf{V}_1$ and its associated basis $\Phi_2$ can be obtained in the same way as the definition of $\mathsf{V}_1$ and its basis $\Phi_1$ except that we now consider the operator $\mathbf{T}^2$ restricted to $\mathsf{V}_1$ instead of $\mathbf{T}$ in $\mathsf{V}_0$.
 
  The columns of $[\mathbf{T}^2]_{\Phi_1}^{\Phi_1} \approx ([\mathbf{T}]_{\Phi_1}^{\Phi_1})^2$ can be interpreted as scaling functions at scale 2, $\widetilde{\Phi}_2=\{\widetilde{\boldsymbol\phi}_{2,k}\}$, represented in the basis $\Phi_1$. From these functions we extract a subset $\Phi_2 = \{\widetilde{\boldsymbol\phi}_{2,k},k\in \mathcal{I}_2\}$ such that any function in $\widetilde{\Phi}_2$ is well-approximated by a linear combination of functions of $\Phi_2$. 

After $j$ iterations of this procedure we have defined $j$ approximation subspaces $\mathsf{V}_j\subset \mathsf{V}_{j-1}\subset\cdots\subset \mathsf{V}_1$ with corresponding bases $\Phi_j, \Phi_{j-1},\ldots, \Phi_1$. At each step the basis $\Phi_j$ is defined by its representation in the basis $\Phi_{j-1}$ based on the restriction of the operator $\mathbf{T}^{2^j}$ to $\mathsf{V}_{j-1}$. In order to represent these functions in the original basis $\Phi_0$ of $\mathsf{V}_0$ we can use
\begin{equation*}
  [\Phi_j]_{\Phi_0} = [\mathbf{I}]_{\Phi_j}^{\Phi_0}=[\mathbf{I}]_{\Phi_{0}}^{\Phi_0} [\mathbf{I}]_{\Phi_{1}}^{\Phi_0}\cdots [\mathbf{I}]_{\Phi_{j-1}}^{\Phi_{j-2}}[\mathbf{I}]_{\Phi_{j}}^{\Phi_{j-1}}.
\end{equation*}
 
 Since every function in $\Phi_0$ is defined on $\mathsf{V}_0$, so is every function on $\Phi_j$. Hence any function in the approximation space $\mathsf{V}_j$ can be extended naturally to the whole space $V_0$.
  
Regarding the construction of the wavelets, we propose to construct the wavelet bases $\Psi_j$ for the subspaces $\mathsf{W}_j$ by selecting a subset of the columns of the band pass operator $[\mathbf{I}_{\mathsf{V}_j}-\Phi_{j+1}\Phi_{j+1}^{\dagger}]_{\Phi_j}^{\Phi_j}$ which is the orthogonal projector on the complement of $\mathsf{V}_{j+1}$ into $\mathsf{V}_j$. The wavelets capture the detail lost from going from $\mathsf{V}_j$ to $\mathsf{V}_{j+1}$. As our framework falls into a bi-orthogonal scope, we need to build the dual wavelet bases. For a scale $j$, we have a wavelet base $\Psi_j$ and we need to construct the associated dual base $\widehat{\Psi}_j$ obtained such that we have the following relation 
\begin{equation*}
\widehat{\Psi}_j^{\dagger}\Psi_j=\mathbf{I}. 
\end{equation*}
where $\widehat{\Psi}_j^{\dagger}$ is the pseudo-inverse of $\Psi_j$. We mention that in~\cite{maggioni2005biorthogonal}, biorthogonal wavelet transform is proposed but only the scaling functions are actually defined. Our construction is identical but we also propose a definition for wavelets.

\subsubsection{A generalization using more arbitrary scaling operators}
We propose a generalization of the diffusion wavelet framework that enables it to be combined for instance with the wavelet construction presented in Section~\ref{diffusion_wavelets_dg}. The idea is merely to replace the powers of $\mathbf{T}$ as multiresolution scaling operators by more arbitrary low-pass filters. More precisely, where the operator $\mathbf{T}^{2^j}$ is used to define the approximation space $\mathsf{V}_{j+1}$ in section~\ref{ssec:DiffWaveletConstruction}, we propose to use instead a low-pass graph filter $\mathbf{H}_j$. If the graph filters  $\mathbf{H}_j$ correspond to the scaling operators defined in~\cref{spectral_wavelet_dg} with appropriately increasing scales, this approach provides a way to reduce the redundancy of the sets of scaling functions. Similarly, the same approach could be used to reduce the redundancy of the wavelet functions defined in Section~\ref{diffusion_wavelets_dg}.

\subsection{Applications}

 This section is devoted to illustrate our multiresolution analysis framework through the visualization of wavelets and scaling functions on regular-type structures and semi-supervised learning on directed graphs. The application framework will be the same as in section~\ref{sect:applications_one}.
 
 \subsubsection{Multiresolution analysis on the directed cycle graph}
We show an example of multi-resolution analysis on the directed cycle graph $\mathcal{C}_N$ with $N=256$. We use the same assumptions made in section~\ref{fa_dcg}. We construct both orthogonal and biorthogonal multi-resolution analyses on $\mathcal{C}_N$ through the framework of the diffusion wavelets applied on the dyadic powers of $\mathbf{T}$, i.e. $\{\mathbf{T}^{2^j}\}_{j=1}^J$. We set the number of scales at $J=6$. 

 \begin{figure}
 \includegraphics[width=1.0\columnwidth]
 {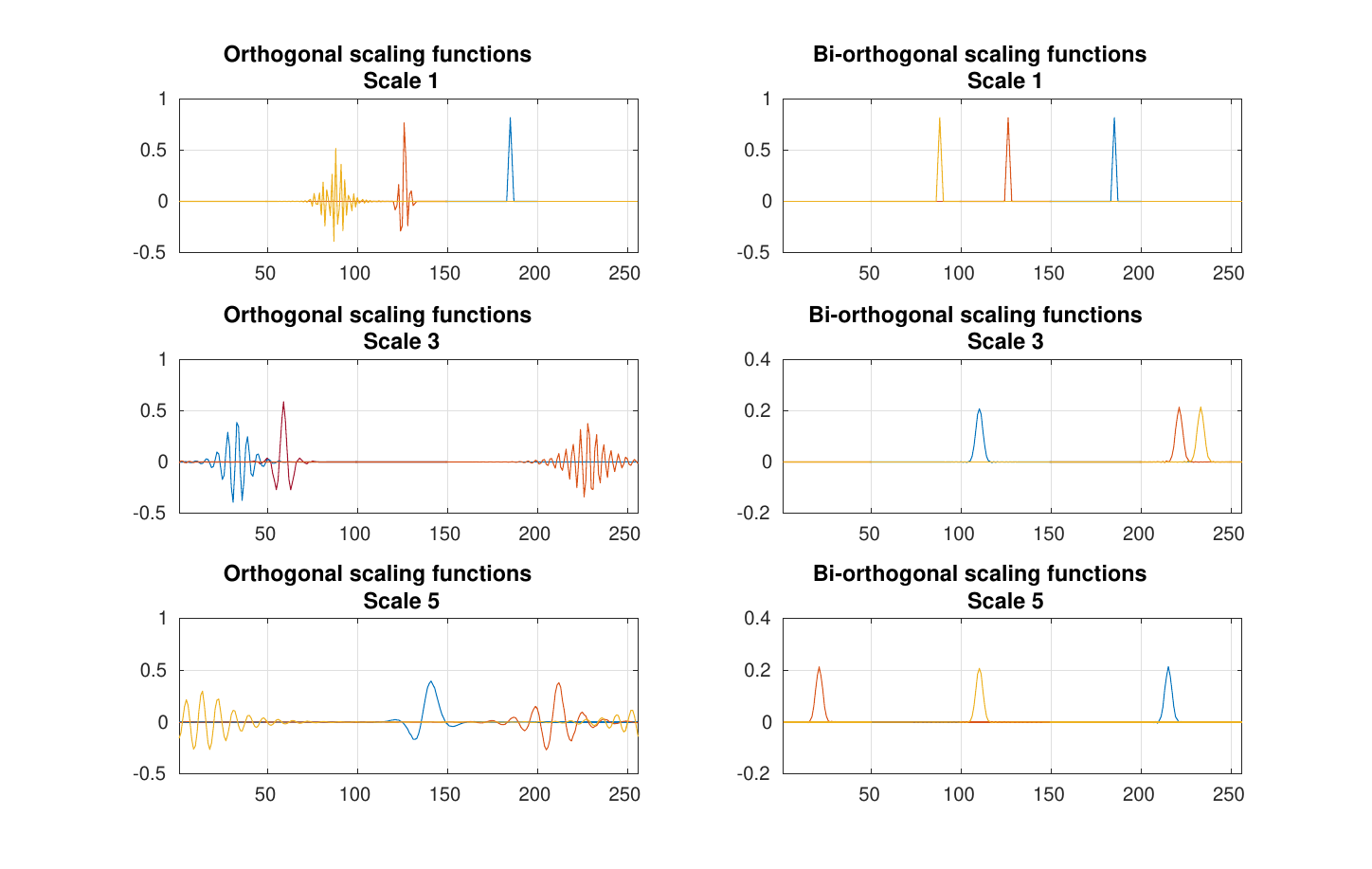}
 \caption{Orthogonal and biorthogonal scaling functions on the directed cycle graph $\mathcal{C}_{256}$.}
 \label{fig:ort_biort_sc_f}
 \end{figure}

 \begin{figure}
 \includegraphics[width=1.0\columnwidth]
 {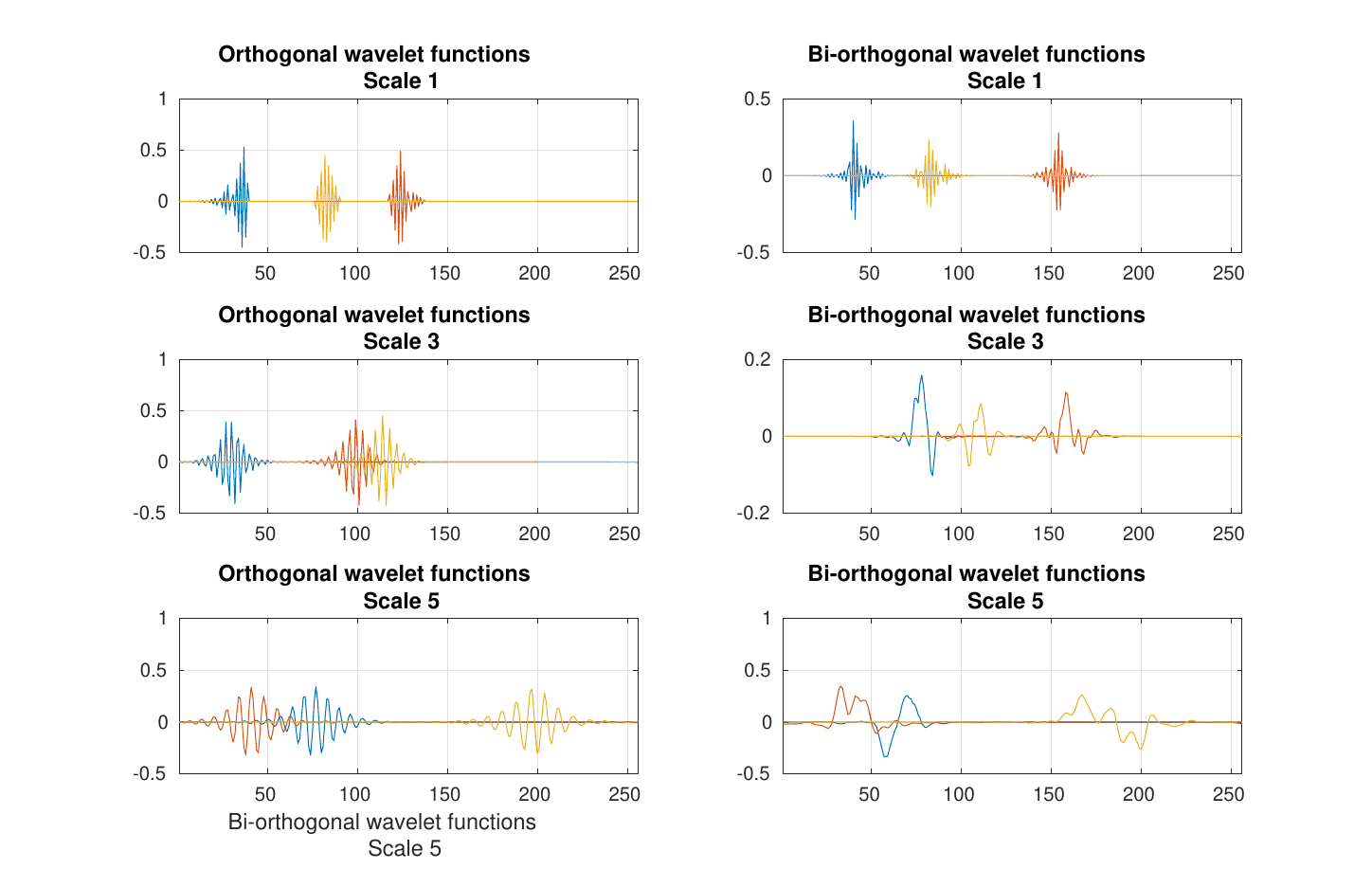}
 \caption{Orthogonal and biorthogonal wavelet functions on the directed cycle graph $\mathcal{C}_{256}$.}
 \label{fig:ort_biort_wav_f}
 \end{figure}

\Cref{fig:ort_biort_sc_f} shows the orthogonal and biorthogonal scaling functions at scales 1,3 and 5. At each scale, we represent 3 or 4 scaling functions. 
\Cref{fig:ort_biort_wav_f} shows the orthogonal and biorthogonal wavelet functions also at scales 1,3 and 5. 

We note at each scale that the support of the orthogonal scaling functions is larger than the support of the biorthogonal scaling functions. We also note at scale 3 and 5 that biorthogonal wavelet functions have support slightly smaller than the orthogonal wavelets functions. 
Furthermore, we note that orthogonal scaling functions as well orthogonal wavelet functions have more oscillations than their biorthogonal counterparts. That means that orthogonal scaling and wavelet functions have a poorer frequency localization. The conclusions about the spatial localization of the scaling functions are identical to the conclusions in~\cite{bremer2006diffusion}. The novelty here is we work on the directed circle graph and we show the orthogonal and biorthogonal wavelets functions in addition to the orthogonal and biorthogonal scaling functions. Finally, the scaling and wavelets functions are not centered around the vertex where they have been selected. They instead propagate along the graph towards the parent vertices of the selected vertex. We can also note that the orthogonal wavelet transform is more robust with respect to perturbations than the biorthogonal wavelet transform. Let $\mathcal{W}_{ortho}=[\Psi_{1},\ldots,\Psi_{5},\Phi_{5}]$ be the inverse wavelet transform and $\mathcal{W}_{biorth}$ its biorthogonal counterpart. The condition numbers on this example are $\kappa(\mathcal{W}_{ortho})\approx 33$ and $\kappa(\mathcal{W}_{biortho})\approx 2\times 10^{4}$. Consequently, the biorthogonal wavelet transform is much more sensitive to perturbations than its orthogonal counterpart.

 \subsubsection{Multiresolution analysis on the directed Watts-Strogatz graph}
 \label{sec:mra_ws}
 
In this section, we show examples of multi-resolution analyses on the directed version of a graph from the Watts-Strogatz model. The Watts-Strogatz model~\cite{watts1998collective} is an undirected random graph model exhibiting small world properties including short average path lengths and high clustering~\cite{newman2003mej}. The construction of a directed graph from the Watts-Strogatz model starts with a directed cycle graph with $N$ vertices. Each node is connected to its $k$ next nodes following the direction of the directed cycle graph. For the sake of simplicity, we consider the $k$ next nodes connected to a given node $i$ following the direction of the directed cycle graph as its ``closest'' neighbors. Starting from an arbitrary vertex, we apply the following procedure to each vertex in a clockwise manner. At vertex $i$, the edge that connects $i$ to each of its next nodes is randomly rewired with probability $p$ or remains untouched with probability $1-p$. This procedure is repeated cyclically for each successive vertex until the vertex $i$ is selected again. We denote by $\mathcal{G}\sim\operatorname{DWS}(N,K,\beta)$ a graph constructed following a directed Watts-Strogatz model with $N$ vertices, $K$ nearest neighbors and rewiring probability $\beta$.

In a first instance, we analyze the scaling functions built from low-pass filters based on $\bar{\mathbf{T}}_{\alpha}$ at a given scale of $\mathcal{G}_1\sim\operatorname{DWS}(64,2,0)$. That is a special case of the directed Watts-Strogatz model with no rewiring. In a second instance, we analyze some scaling functions built from low-pass filters based on $\bar{\mathbf{T}}_{\alpha}$  at a given scale of $\mathcal{G}_2\sim\operatorname{DWS}(64,2,0.02)$. Lastly, we show both orthogonal and biorthogonal scaling and wavelets functions built following the construction in section~\ref{diffusion_wavelets_dg}. We summarize the following cases in table~\ref{mra_cases}.

% In a third instance, we compare some scaling functions built from low-pass filters based on the additive reversibilization of the random walk i.e. $\bar{\mathbf{T}}$ and a random walk built from the symmetrization of $\mathcal{G}_2$. 

\begin{table}[H]
\begin{center}
 \begin{adjustbox}{max width=\textwidth}
\begin{tabular}{c c c}
\toprule
\textbf{Case} & Graph & Reference operators\\
\midrule
Case 1 : Scaling functions built wrt. sect.~\ref{spectral_wavelet_dg} & $\mathcal{G}_1\sim\operatorname{DWS}(64,2,0)$ & $\bar{\mathbf{T}}_{\alpha}\in\bar{\mathcal{T}}$\\[0.5 cm]

Case 2: Scaling functions built wrt. sect.~\ref{spectral_wavelet_dg} & $\mathcal{G}_2\sim\operatorname{DWS}(64,2,0.02)$ & $\bar{\mathbf{T}}_{\alpha}\in\bar{\mathcal{T}}$\\ [0.5cm]

Case 3: Scaling functions built wrt. sect.~\ref{diffusion_wavelets_dg} & $\mathcal{G}_2\sim\operatorname{DWS}(64,2,0.02)$ & $\bar{\mathbf{T}}$\\ [0.5 cm]
\bottomrule
\end{tabular} 
\end{adjustbox}
\end{center}
 \caption{Table of the different cases for the directed Watts-Strogatz graph.}
 \label{mra_cases}
\end{table}

\paragraph{Case 1}
 In this case, we analyze some scaling functions built from low-pass filters based on $\bar{\mathbf{P}}_{\alpha}$ at scale $t=2^4$ of a directed Watts-Strogatz graph $\mathcal{G}_2\sim\operatorname{DWS}(64,2,0)$. We consider the following low-pass filters
\begin{equation}
\label{ref_op_l_two}
\bar{\mathbf{T}}_{\alpha}=\boldsymbol\Pi^{1/2}\bar{\mathbf{P}}_{\alpha}\boldsymbol\Pi^{-1/2},\quad \forall \alpha\in[0,1]. 
\end{equation}
and we construct a collection of low-pass graph filters at a given scale following the construction presented in~\cref{spectral_wavelet_dg}. More precisely, we build the filters $\mathbf{H}_{\alpha}$ as follows
\begin{equation*}
\mathbf{H}_{\alpha}=\sum_{\omega\in\boldsymbol{\omega}} h(t\omega)\mathbf{S}_{\omega,\alpha},
\end{equation*}
with $t=2^{4}$ and $h(x)=\exp(-x)$. 

 \begin{figure}
 \includegraphics[width=1.0\columnwidth]
 {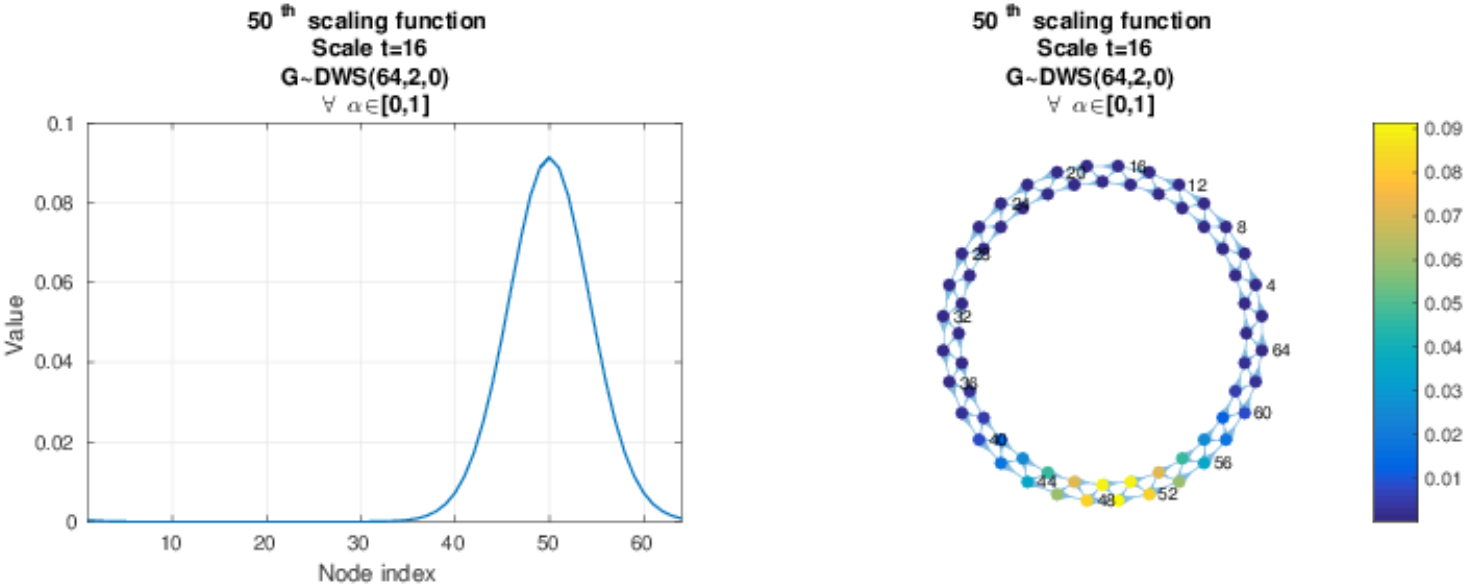}
 \caption{(Case 1) $50^{th}$ scaling function at scale t=16 on the graph $\mathcal{G}\sim\operatorname{DWS}(64,2,0)$, for all $\alpha\in[0,1]$.}
 \label{mra_case_one}
 \end{figure}
 
 \Cref{mra_case_one} represents the $50^{th}$ scaling function $\mathfrak{h}_{\alpha,50}=\mathbf{H}_{\alpha}\boldsymbol\delta_{50}$ at scale $t=2^4$ built from low-pass filters $\mathbf{T}_{\alpha}$, for all $\alpha\in[0,1]$ according to the construction~\cref{spectral_wavelet_dg} on the directed Watts-Strogatz graph $\mathcal{G}\sim\operatorname{DWS}(64,2,0)$. The particularity of this case is that the following graph has no rewiring. Consequently, the associated adjacency matrix is circulant.
All the low-pass filters $\mathbf{T}_{\alpha}$ admit the same discrete Fourier basis and all corresponding frequencies are the same. Hence the associated scaling functions $\mathfrak{h}_{\alpha,50}$ are exactly the same. We observe that the scaling function is centered around the node 50 and has a symmetric shape.
In this case, for non symmetric graph filters $\mathbf{H}_{\alpha}$, the scaling functions have symmetric shape and are centered around their given node.

\paragraph{Case 2} 
In this case, we analyze some scaling functions built from low-pass filters based on $\bar{\mathbf{P}}_{\alpha}$ at scale 10 of $\mathcal{G}_2\sim\operatorname{DWS}(64,2,0.02)$. We also consider the following low-pass filters
\begin{equation}
\bar{\mathbf{T}}_{\alpha}=\boldsymbol\Pi^{1/2}\bar{\mathbf{P}}_{\alpha}\boldsymbol\Pi^{-1/2},\quad \forall \alpha\in\{0,0.5,1\}. 
\end{equation}
and we construct a collection of low-pass graph filters at a given scale following the construction presented in~\cref{spectral_wavelet_dg}. More precisely, we build the filters $\mathbf{H}_{\alpha}$ as follows
\begin{equation*}
\mathbf{H}_{\alpha}=\sum_{\omega\in\boldsymbol{\omega}} h(t\omega)\mathbf{S}_{\omega,\alpha}
\end{equation*}
with $t=2^{4}$ and $h(x)=\exp(-x)$. We observe the $50^{th}$ scaling functions at scale $t=2^{4}$, that is $\mathfrak{h}_{\alpha,50}= \mathbf{H}_{\alpha}\boldsymbol\delta_{50}$.

 \begin{figure}
 \includegraphics[width=1.0\columnwidth]
 {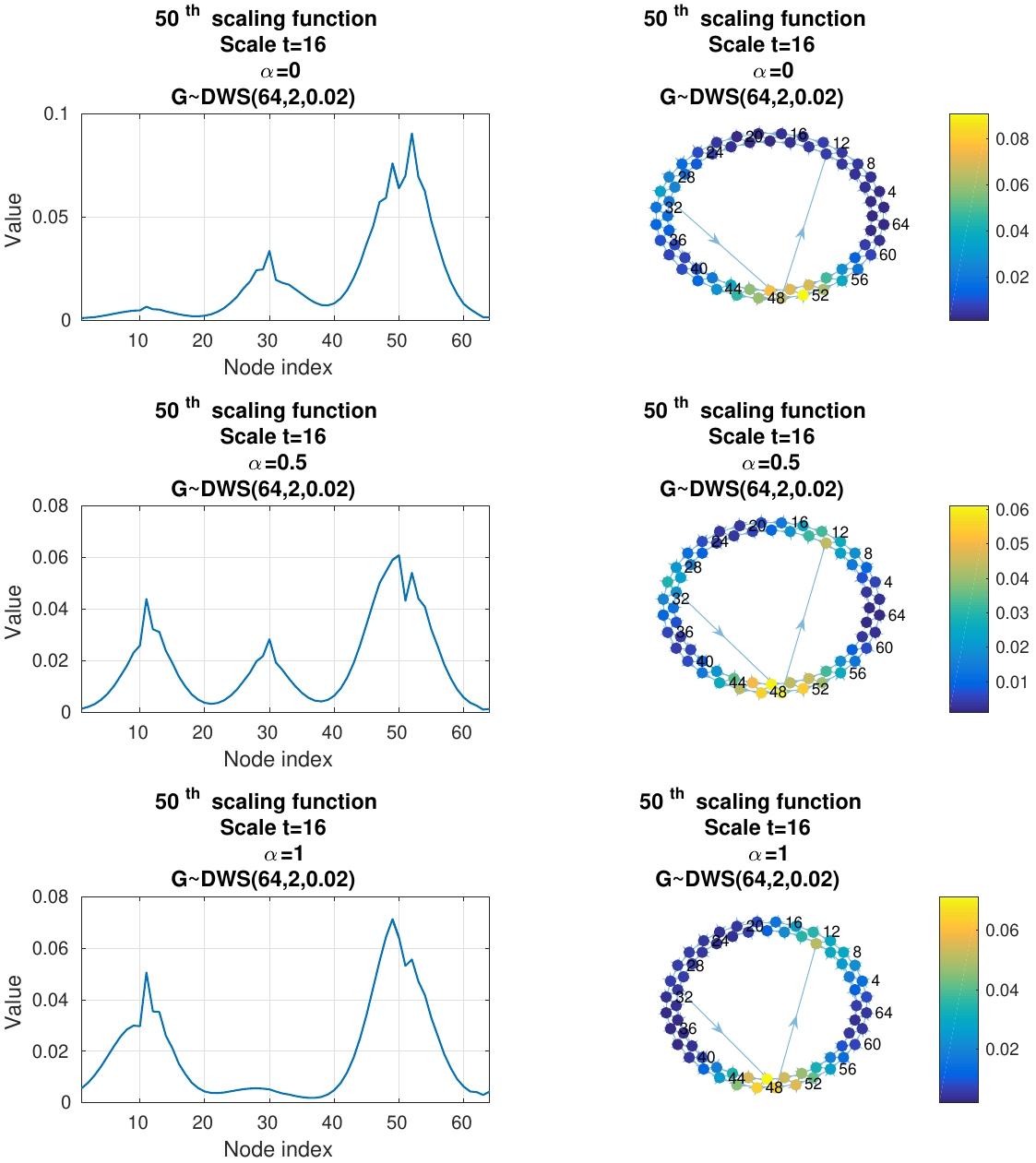}
 \caption{$50^{th}$ scaling function at scale 4 on a graph $\mathcal{G}\sim\operatorname{DWS} (64,2,0.02)$, $\alpha\in\{0,0.5,1\}$,~\cref{ref_op_l_two}.}
 \label{mra_case_two_part_one}
 \end{figure}
 
The $50^{th}$ scaling function at the scale $t=2^4$, for different $\alpha$, $\mathfrak{h}_{\alpha,50}$, are represented in \cref{mra_case_two_part_one}. The graph $\mathcal{G}_2$ admits a directed edge from node $50$ to node $11$ and a directed edge from node $30$ to node $50$. As $\alpha$ increases towards $1$, we observe that the scaling function $\mathfrak{h}_{\alpha,50}$ propagates around the child node $11$. This means that the more $\alpha$ increases, the more the influence of the backward random walk $\mathbf{P}^{*}$ is important. For $\alpha=0.5$, the scaling function $\mathfrak{h}_{\alpha,50}$ diffuses both ways and is centered around the child node $11$ and the parent node $30$. Finally, as $\alpha$ goes to 0, we observe that $\mathfrak{h}_{\alpha,50}$ mostly propagates towards the parent node $30$.

\paragraph{Case 3}
Here, we consider orthogonal and biorthogonal scaling functions on a graph $\mathcal{G}\sim \operatorname{DWS}(64,2,0.02)$ used on the Case 2. These orthogonal and biorthogonal scaling functions are built from the diffusion wavelets procedure. We start the procedure with the low-pass filter $\mathbf{H}=\mathbf{T}^{2}$ and we look at the dyadic powers of $\mathbf{H}$  i.e. $\{\mathbf{H}^{2^j}\}_{j=1}^J$. We set the number of scales at $J=5$. We observe the orthogonal an biorthogonal scaling functions at node $49$ at scale $3$ obtained by the diffusion wavelets procedure and we compare to the scaling function at node $49$ built from a graph filter based on $\bar{\mathbf{T}}$ at the scale $t=16$. 

 \begin{figure}
 \includegraphics[width=1.0\columnwidth]
 {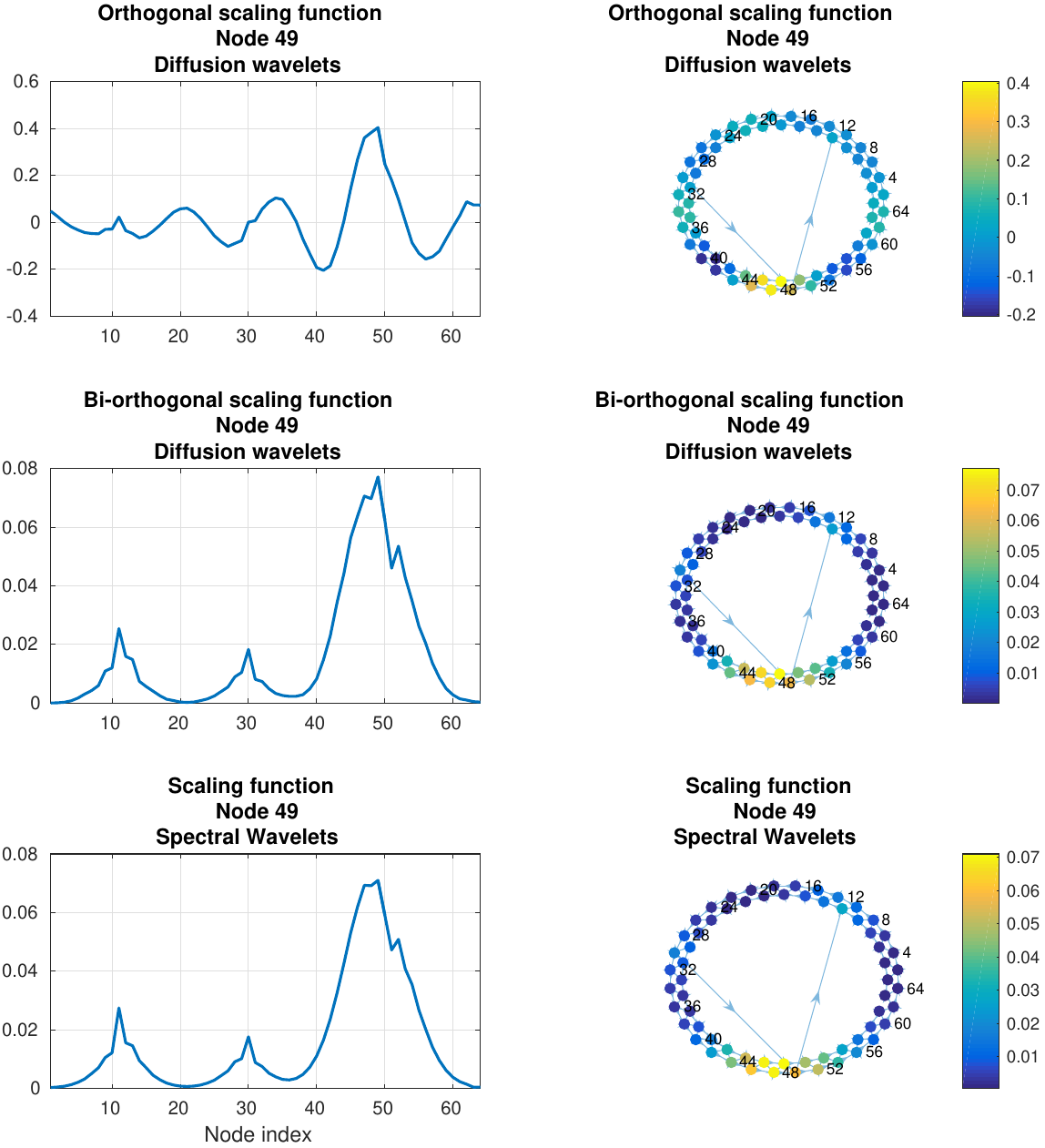}
 \caption{Orthogonal and bi-orthogonal scaling functions built w.r.t the diffusion wavelet framework versus scaling function built w.r.t spectral wavelets framework.}
 \label{fig:sc_ws_case_three}
 \end{figure}

\Cref{fig:sc_ws_case_three} shows the orthogonal and bi-orthogonal scaling functions at node $49$ built using the diffusion wavelets framework and the scaling function at node $49$ built using the spectral graph wavelets framework. We note that the biorthogonal scaling function and the scaling function built by the spectral graph wavelet framework are similar, i.e. well localized around nodes $11$ and $30$ and localized around $49$ with a large size support. By contrast, the orthogonal scaling function has a poor spatial localization around the nodes  $11$ and $30$.

 \subsubsection{Semi-supervised learning on directed graphs with $\ell_1$-regularization}
\label{sec:semi_sup_l_one}
 
We discuss the semi-supervised learning approach in the case of functions over directed graphs with $\ell_1$ regularization on the wavelet coefficients. Our aim is to show that the performance of the semi-supervised learning problem with $\ell_1$ regularization  is competitive compared to the existing approaches, e.g. the semi-supervised learning problem studied in section~\ref{sect:applications_one}. 
% We discuss the following semi-supervised learning problem, again on the example of the political blogs dataset~\cite{adamic2005political}. 
We use the same notations as in~\cref{sect:applications_one} and work on the subgraph $\mathcal{G}'$. The $\ell_1$-regularized solution for the semi-supervised learning problem is 

\begin{equation}
\label{semi_sup_l_one}
\boldsymbol w^{*}=\underset{\boldsymbol{w}}{\operatorname{argmin}}\:\|\tilde{\boldsymbol  y}-\mathbf{M}\mathbf{K}\boldsymbol w\|_2^2+\lambda\|\boldsymbol w\|_1,\quad \lambda\in\mathbb{R}^{+}.
\end{equation}

The graph signal $\tilde{\boldsymbol  y}=\mathbf{M}\boldsymbol y$ is the partially labeled graph signal with $\mathbf{M}=\{m_{ij}\}_{\{1\leq i,j\leq N\}}$ is the mask operator, i.e. the diagonal matrix where $m_{ii}=\mathbb{1}_{v_i\in\mathcal{O}}$ where $\mathcal{O}\subset\mathcal{V}$ is the subset of known labels. 

The matrix $\mathbf{K}=(\mathbf{H}_J,\mathbf{G}_1,\cdots,\mathbf{G}_J)$ is the synthesis filter bank. If we set $\mathbf{X}=\mathbf{MK}$, the equation \eqref{semi_sup_l_one} can be rewritten as
\begin{equation*}
\boldsymbol w^{*}=\underset{\boldsymbol{w}}{\operatorname{argmin}}\:\|\tilde{\boldsymbol y}-\mathbf{X}\boldsymbol w\|_2^2+\lambda\|\boldsymbol w\|_1,\quad \lambda\in\mathbb{R}.
\end{equation*}

The formulation \eqref{semi_sup_l_one} is identical to the formulation of the problem of signal restoration with redundant wavelet transforms in~\cite{selesnick2009signal}, except that $\mathbf{K}$ is the synthesis wavelet transform for functions defined over directed graphs. Furthermore, previous approaches of semi-supervised learning on undirected graphs have been investigated using overcomplete graph wavelets~\cite{shuman2011semi} or critically sampled spline graph wavelets~\cite{ekambaram2013wavelet}. The $\ell_1$-regularized synthesis semi-supervised learning problem is convex and can be solved efficiently using e.g. proximal splitting methods~\cite{combettes2011proximal,beck2009fast,becker2011nesta}. From the solution $\boldsymbol{w}^{*}$ of \cref{semi_sup_l_one}, we define the restored signal $\boldsymbol f^{*}$ as
\begin{equation*}
\boldsymbol f^{*}=\operatorname{sign}(\mathbf{K}\boldsymbol{w}^{*}). 
\end{equation*}
% where $\mathcal{L}$ is the directed normalized Laplacian defined at~\eqref{eq:dir_norm_laplacian} and the graph bandpass filters $\mathbf{G}_1,\mathbf{G}_2$ are

We compare the performance of formulations \eqref{semi_sup_l_one} and \eqref{semi_sup_dir_zhu}. Formulation \eqref{semi_sup_l_one} requires a synthesis filter bank $\mathbf{K}$. We choose a two scales synthesis graph filter bank $\mathbf{K}=(\mathbf{H}_2,\mathbf{G}_1,\mathbf{G}_2)$  based on a heat kernel construction~\cite{barlow2017random}
\begin{equation*}
\mathbf{H}_j=e^{-\mathcal{L}t_j},\quad t_j=2^j, \quad j=1,2.
\end{equation*}
with $\mathcal{L}$ the directed Laplacian defined at \eqref{eq:dir_norm_laplacian}
and the graph bandpass filters $\mathbf{G}_1,\mathbf{G}_2$ as
\begin{equation*}
\mathbf{G}_2=\mathbf{H}_{1}-\mathbf{H}_2,\quad \mathbf{G}_1=\mathbf{I}-\mathbf{H}_1.
\end{equation*}

\begin{figure}
\includegraphics[width=0.7\columnwidth]
{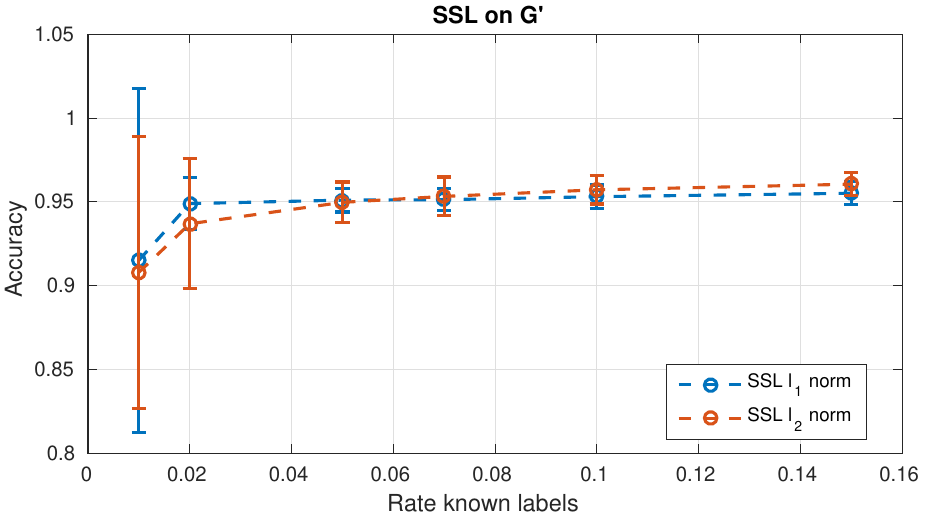}
\caption{SSL on the largest connected graph of US political blogs.}
\label{ssl_wav_vs_dir}
\end{figure}

\Cref{ssl_wav_vs_dir} shows the performance of semi-supervised learning formulations~\eqref{semi_sup_l_one} and~\eqref{semi_sup_dir_zhu} on the largest strongly connected graph obtained from US political blogs. 
The performance is obtained by averaging 200 realizations and choosing the parameters $\lambda$ and $\gamma$ that maximize the performance. On~\cref{ssl_wav_vs_dir}, the performance based on the $\ell_1$ penalization of redundant graph wavelets coefficients on directed graphs is competitive against the problem with $\ell_2$ Dirichlet regularization term for all percentage of known labels. Normally, one should expect better performance in the $\ell_1$ case since the signal should be rather sparse in the wavelet basis. However, the wavelet representation of the signal here is not that sparse because our graph wavelets are not that good in terms of spatial localization.

% This is a consequence of using a construction based on a heat kernel instead of a polynomial of $\mathcal{L}$ with a low order.
% Thus, in order to improve the performance of the $\ell_1$ version, it could be wise to use wavelets with a better space-frequency characterization, and choose the best suitable number of scales with respect to the characteristics of the data set. 

% More specifically, for a low rate of known labels, the performance rate based on $\ell_1$ penalization  is slightly better than the $\ell_2$ problem and as the rate of known labels increases, the $\ell_1$ problem \eqref{semi_sup_l_one} is competitive compared to the $\ell_2$ problem \eqref{semi_sup_dir_zhu}. 

\section{Conclusion} 
\label{sec:ccl}
We introduced a novel harmonic analysis on directed graphs. First, we proposed a frequency analysis for functions defined on directed graphs based on the eigenvectors of the random walk operator on a directed graph. From this Fourier-type frequency interpretation, we showed how to construct redundant wavelets on directed graphs as well as critically sampled wavelets by generalizing the diffusion wavelets framework. Finally, we illustrated our harmonic analysis through examples of semi-supervised learning and graph signal modeling on directed graphs and showed the relevance of our framework compared to existing approaches. 

\section{Acknowledgements}
The authors would like to thank Romain Cosentino, Randall Balestriero, Yanis Bahroun, Benjamin Girault and Antonio Ortega for the useful discussions.

\section{References} 

\bibliographystyle{elsarticle-num} 
\bibliography{bibliography.bib}

%\bibliographystyle{unsrt}
% \printbibliography

\end{document}